\newlength{\alphabet}
\newcommand*{\Z}{\mathbb{Z}}
\newcommand*{\N}{\mathbb{N}}
\newcommand*{\C}{\mathbb{C}}
\newcommand*{\cO}{\mathcal{O}}
\newcommand*{\cT}{\mathcal{T}}
\newcommand*{\cI}{\mathcal{I}}
\newcommand*{\cK}{\mathcal{K}}
\newcommand*{\cL}{\mathcal{L}}
\newcommand*{\cB}{\mathcal{B}}
\newcommand*{\fM}{\mathfrak{M}}
\newcommand*{\fN}{\mathfrak{N}}
\newcommand*{\Ss}[1]{{#1}^{**}}
\newcommand*{\ssA}{\Ss{A}}
\newcommand*{\ssB}{\Ss{B}}
\newcommand*{\ssX}{\Ss{X}}
\newcommand{\disj}{}
\DeclareRobustCommand{\disj}{%
  \mathrel{%
    \mathpalette\disj@\relaxn
  }%
}
\newcommand{\disj@}[2]{%
  \begingroup
  \setlength{\unitlength}{\disj@height{#1}}%
  \begin{picture}(1.4,2)
  \roundcap
  \linethickness{\disj@thickness{#1}}
  \put(0.7,0.5){\circle{1}}
  \Line(0.7,1)(0.7,2)
  \end{picture}%
  \endgroup
}
\newcommand{\disj@height}[1]{%
  0.8\fontdimen5
  \ifx#1\displaystyle\textfont\else
  \ifx#1\textstyle\textfont\else
  \ifx#1\scriptstyle\scriptfont\else
  \scriptscriptfont\fi\fi\fi 3
}
\newcommand{\disj@thickness}[1]{%
  \fontdimen8
  \ifx#1\displaystyle\textfont\else
  \ifx#1\textstyle\textfont\else
  \ifx#1\scriptstyle\scriptfont\else
  \scriptscriptfont\fi\fi\fi 3
}
\theoremstyle{plain}
\newtheorem{theorem}{Theorem}[section]
\newtheorem{lemma}[theorem]{Lemma}
\newtheorem{proposition}[theorem]{Proposition}
\newtheorem{corollary}[theorem]{Corollary}
\theoremstyle{definition}
\newtheorem{definition}[theorem]{Definition}
\newtheorem{example}[theorem]{Example}
\newtheorem{construction}[theorem]{Construction}
\newtheorem*{ack}{Acknowledgements}
\theoremstyle{remark}
\newtheorem*{remark}{Remark}
\title{Maximality of correspondence representations}
\author{Boris Bilich}
\date{}
\begin{document}
\begin{center}
	{\LARGE\bf
		Maximality of correspondence representations 
	}

	\bigskip

	Boris Bilich\footnote{University of Haifa and Georg-August-Universit\"{a}t G\"{o}ttingen \\
	The author is partially supported by the Bloom PhD scholarship and the DFG Middle-Eastern collaboration project no. 529300231\\

	{\bf Keywords:} C*-correspondence, Cuntz-Pimsner algebra, hyperrigidity, dilation, unique extension property, Choquet boundary.

		{\bf 2020 Mathematics Subject
			Classification: } 46L07, 46L08, 47A20, 47L55.}

\end{center}
\begin{abstract}
	In this paper, we fully characterize maximal representations of a C*-correspondence, thereby strengthening several earlier results.
	We demonstrate the maximality criteria through diverse examples.
	We also describe the noncommutative Choquet boundary and provide additional counterexamples to Arveson's hyperrigidity conjecture following the counterexample recently found by the author and Dor-On.
	Furthermore, we identify several classes of correspondences for which the hyperrigidity conjecture holds.
\end{abstract}
\section{Introduction}
A dilation of an operator $T$ on a Hilbert space $H$ is an operator $S$ on a larger Hilbert space $K\supset H$ such that $T$ is a corner of $S$, often represented in block-diagonal form as:
\[
	S = \begin{bmatrix}
		T    & \ast \\
		\ast & \ast
	\end{bmatrix}.
\]
The general paradigm of dilation theory is to study properties of operators by finding a dilation that exhibits features that are easier to work with.
This concept is best illustrated by classical results such as Sz.-Nagy's unitary dilation of a contraction \cite{sznagy}, And\^{o}'s dilation theorem \cite{ando}, which shows that a pair of commuting contractions can be dilated to a pair of commuting unitaries, and Stinespring's dilation \cite{stinespring} of a unital completely positive map on a C*-algebra to a *-homomorphism.
The theory is now flourishing \cite{dil-bhatt, choi-davidson, davidson-dor-on, shalit-product-sys, davidson-power, davidson-fuller, vernik, popescu} and has many applications. For a comprehensive introduction and overview, see \cite{shalit-guided-tour}.

This paper studies dilation theory for operator systems associated with C*-correspondences.
We first define what operator systems and dilations mean in this context to set the stage.

An \emph{operator system} $\mathcal{S}$ is a self-adjoint unital subspace of a C*-algebra $B$.
We additionally require that $\mathcal{S}$ generates $B$ as a C*-algebra.
A unital linear map $\varphi\colon \mathcal{S} \to \mathcal{B}(H)$ is called unital completely positive (u.c.p.~map) if it sends positive $n$ by $n$ matrices with entries in $\mathcal{S}$ to positive operators on $H^{\oplus n}$.
A dilation of $\varphi$ is a u.c.p.~map $\psi\colon \mathcal{S} \to \mathcal{B}(K)$ together with an isometric embedding $V\colon H \hookrightarrow K$ such that
\[
	V^* \psi(s) V = \varphi(s) \text{ for all } s \in \mathcal{S}.
\]
The dilation is called trivial if $\psi = \varphi \oplus \psi'$ under the identification of $H$ with $VH$.
A u.c.p.~map $\varphi$ is said to be maximal if it admits no non-trivial dilations.

Building on the ideas from Agler \cite{agler} and Muhly and Solel \cite{muhly-solel-boundary}, Dritschel and McCullough demonstrated that maximal u.c.p.~maps can be characterized by the \emph{unique extension property} (UEP) defined by Arveson~\cite{arveson69, arveson-note}.
By definition, a u.c.p.~map $\varphi\colon \mathcal{S} \to \mathcal{B}(H)$ satisfies the UEP if there is a *-representation $\rho\colon B \to \mathcal{B}(H)$ such that $\varphi = \rho|_{\mathcal{S}}$ and $\rho$ is the unique u.c.p.~extension of $\varphi$.
We say that a representation $\rho$ of $B$ is maximal (with respect to $\mathcal{S}$) if $\rho|_{\mathcal{S}}$ is a maximal u.c.p.~map.
By the above, there is a bijective correspondence between maximal u.c.p.~maps and maximal $B$-representations.

According to the theorem of Dritschel and McCullough~\cite[Theorem 2.1]{dritschel-mc}, every u.c.p.~map has a maximal dilation.
Dritschel and McCullough used maximality to provide the first dilation-theoretic proof of the existence of the C*-envelope $C^*_{env}(\mathcal{S})$, which is the smallest quotient of $B$ such that the quotient map is completely isometric on $\mathcal{S}$.
As they showed in \cite[Theorem 4.1]{dritschel-mc}, maximal representations factor through $C^*_{env}(\mathcal{S})$, and there is a maximal representation that is faithful on $C^*_{env}(\mathcal{S})$.

However, not every representation of $C^*_{env}(\mathcal{S})$ is maximal.
To better understand this phenomenon, Arveson \cite{arveson-choquet} defined the \emph{noncommutative Choquet boundary} $\partial_{C}\mathcal{S} \subset \widehat{C^*_{env}(\mathcal{S})}$ as the subset of unitary equivalence classes of maximal irreducible representations.
Elements of $\partial_C \mathcal{S}$ are called \emph{boundary representations}.
Arveson~\cite{arveson-choquet} in the separable case and Davidson-Kennedy~\cite{davidson-kennedy} in general proved that every operator system has sufficiently many boundary representations.

In \cite{arveson-hyperrigidity} Arveson introduced the notion of hyperrigidity of operator systems and gave several equivalent definitions.
The most relevant definition for us is that an operator system is hyperrigid if all representations of $C^*_{env}(\mathcal{S})$ are maximal.
In an attempt to connect this notion with the noncommutative Choquet boundary, Arveson conjectured \cite[Conjecture 4.3]{arveson-hyperrigidity} that if $\partial_C\mathcal{S} = \widehat{C^*_{env}(\mathcal{S})}$, i.e., all irreducible representations of the C*-envelope are boundary representations, then the operator system is hyperrigid.
This has come to be known as \emph{Arveson's hyperrigidity conjecture}, and it has received significant attention in the literature \cite{davidson-kennedy, kennedy-shalit, clouatre1,clouatre2, clouatre-hartz, clouatre-thompson, davidson-kennedy1, harris-kim, kim, katsoulis-ramsey, thompson, pietrzycki-stochel, kakariadis-shalit}.
Recently, an elementary Type I counterexample was found by the author and Dor-On \cite{hyp-counter}.
However, it is still interesting to determine for which classes of operator systems the conjecture holds.
For example, the conjecture remains open for function systems in commutative C*-algebras.

In the current paper, we study the topics described above in the context of a C*-correspondence $X$ over a C*-algebra $A$.
There are two natural operator systems associated with $X$ in the Cuntz-Toeplitz algebra $\cT_X$.
The first one is $\cT_X^+ + (\cT_X^+)^*$ for the non-self-adjoint tensor algebra $\cT_X^+$ generated by $A$ and $X$.
This operator algebra and the associated dilation theory were studied by Muhly and Solel \cite{muhly-solel-dilation} and by Katsoulis and Ramsey \cite{katsoulis-ramsey, katsoulis-ramsey2}.
Another operator system is $\mathcal{S}_X = X^* + A + X \subset \cT_X$ which was studied by Kim~\cite{kim}.
Moreover, following Muhly and Solel \cite{muhly-solel-dilation}, we give a definition of dilations and maximality for representations of $X$ without any reference to operator systems in Definition~\ref{d:dilation}.
We thus have, \emph{a priori}, three notions of maximality for isometric representations of $X$.
Fortunately, they all coincide (Proposition~\ref{p:uep-equiv}).

We study dilation theory for correspondences in two steps.
First, we analyze the case when $X$ is a  W*-correspondence over a von Neumann algebra $\fM$.
It turns out that the dilations in this case have a much more amenable structure.
Maximality is then detected by a spatial condition involving the \emph{support projection} $P_X\in \fM$ of $X$ as follows: a representation $t\colon X\to \cB(H)$ is maximal if and only if $\overline{t(X)H} = \rho(P_X)H$.
We call this property the \emph{full Cuntz-Pimsner covariance} since it generalizes the notion of full Cuntz-Krieger families by Dor-On and Salomon~\cite{doron-salomon} in the context of graph operator algebras.

After dealing with W*-correspondences, we reduce the problem for C*-correspondences to the W*-case by using second dual techniques (see Section~\ref{s:second-dual}).
If $X$ is a C*-correspondence over a C*-algebra $A$, then $\ssA$ is the universal von Neumann algebra of $A$, and $\ssX$ is a W*-correspondence over it (see \cite[Section 8.5]{BLM}).
We prove that the representations of $X$ are in canonical bijection with the normal representations of $\ssX$ and that this bijection respects dilations (Proposition~\ref{p:ss-equiv}).
Therefore, the notions of maximality for $X$ and $\ssX$ coincide and thus the maximality criterion directly follows from the one for W*-correspondences.
We define full Cuntz-Pimsner covariance for W*- and C*-correspondences simultaneously in Definition~\ref{d:p-relative} and prove that it is equivalent to maximality in Theorem~\ref{t:criterion}, which is the main result of the paper.
A similar idea to detect maximality by a projection in the second dual algebra was investigated by Clouâtre and Saikia \cite{clouatre-saikia}.
This shows that the reduction to von Neumann algebras via second duals is a powerful tool for studying non-commutative Choquet boundary theory.

Of course, the second dual $\ssA$ is a very big algebra which is hard to describe even for the most basic C*-algebras.
Therefore, we also prove several maximality criteria which do not refer to the second dual.
We describe maximality for an \emph{almost proper} correspondence in terms of ideals in Proposition~\ref{p:almost-proper}.
Furthermore, we interpret full Cuntz-Pimsner covariance for topological graphs in terms of receiving vertices in Theorem~\ref{t:topological-graphs}.

Using the maximality criterion, we are able to recover several earlier results.
The characterization of maximal representations of graph correspondences by Dor-On and Salomon \cite{doron-salomon} is an immediate consequence, and we describe it in Example~\ref{ex:dor-on-salomon} (and Theorem~\ref{t:topological-graphs} is a generalization to topological graphs).
In Corollary~\ref{cor:katsoulis-kribs}, we prove the Katsoulis-Kribs theorem, which states that the Cuntz-Pimsner algebra $\cO_X$ is the C*-envelope of $\cT^+_X$.

Kim~\cite{kim} established a general criterion for the hyperrigidity of a correspondence, refining the result of Katsoulis and Ramsey~\cite{katsoulis-ramsey}, who provided a sufficient condition for hyperrigidity when the correspondence is countably generated. In Corollary~\ref{cor:kim}, we derive Kim's hyperrigidity criterion: a correspondence $X$ is hyperrigid if and only if Katsura's ideal acts non-degenerately on $X$.

In Section~\ref{s:boundary}, we study the noncommutative Choquet boundary and Arveson's hyperrigidity conjecture.
We provide an abstract characterization of the Choquet boundary in Theorem~\ref{t:choquet} via the so-called atomic spectrum $\sigma_a(X) \subset \widehat{A}$ (see Definition~\ref{d:atomic}).
For Type I and abelian C*-algebras, we give a more concrete description of the atomic spectrum, and thus of the Choquet boundary, in Proposition~\ref{p:point-sp-typeI}.

We analyze the hyperrigidity conjecture for correspondences.
A whole class of counterexamples to the hyperrigidity conjecture is presented in Proposition~\ref{p:hyperrigidity-of-motimesa}.
We also derive a measure-theoretic criterion for a correspondence associated with a topological quiver to be a counterexample in Corollary~\ref{cor:conjecture-quiver}.
On the other hand, we show that the conjecture holds for proper correspondences (Proposition~\ref{p:conjecture-proper}) and for topological graphs (Corollary~\ref{cor:conjecture-quiver}).
This demonstrates that correspondences provide a natural framework for studying the connections between the Choquet boundary and hyperrigidity.

\begin{ack}
	The author is deeply grateful to his PhD advisor, Adam Dor-On, for formulating the original problem and providing guidance throughout this work. Special thanks are also due to Baruch Solel for a productive conversation about W*-correspondences. The author also thanks Orr Shalit for his insightful comments and corrections on the first version of the preprint.
\end{ack}

\section{C*- and W*-correspondences}\label{s:correspondences}
Throughout the section, $A$, $B$ will denote C*-algebras and $\fM$, $\fN$ will denote von Neumann algebras.

\begin{definition}
	An \emph{$A$-$B$-correspondence} is a right Hilbert $B$-module $X$ together with a non-degenerate left $A$-action $\varphi_X\colon A\to \cL(X)$.
	The correspondence $X$ is called \emph{proper} if $\varphi_X(A) \subset \cK(X)$ and \emph{injective} if $\varphi_X$ is injective.
	When $A=B$, we refer to it as an $A$-correspondence instead of an $A$-$A$-correspondence.
	We write $\varphi$ instead of $\varphi_X$ when $X$ is clear from the context.

	An $\fM$-$\fN$-correspondence $X$ is a W*-correspondence if $X$ has a Banach predual and the left action $\varphi_{X}$ is normal (see \cite[Section 8.5]{BLM} for equivalent definitions).
	Given von Neumann algebras $\fM$, $\fN$, and a correspondence $X$ between them, we will always assume that $X$ is a W*-correspondence.
\end{definition}
If $X$ is an $A$-$B$-correspondence and $Y$ is a $B$-$C$-correspondence, then there is an $A$-$C$-correspondence $X\otimes_{B} Y$ which is a completion of the algebraic tensor product with respect to the tensor inner product defined by
\[
	\langle \xi\otimes \eta, \xi'\otimes\eta'\rangle_{X\otimes_B Y} = \langle \eta, \varphi_Y(\langle \xi, \xi'\rangle_X)\eta'\rangle_Y
\]
for all $\xi,\xi'\in X$ and $\eta,\eta'\in Y$.
In particular, if $H$ is a $B$-representation, then we can view it as a $B$-$\C$-correspondence, so that $X\otimes_{B} H$ is an $A$-representation.
The tensor product construction is defined analogously for W*-correspondences.
If X is a $\fM$-$\fN$-W*-correspondence, then $X\otimes_{\fN} H$ is a normal representation of $\fM$ if $H$ is a normal representation of $\fN$.

\begin{definition}
	An (isometric) representation $(H,\rho_H, t)$ of an $A$-corres\-pondence (resp. W*-correspondence over $\fM$) $X$ on a Hilbert space $H$ consists of a non-degenerate $A$-representation (resp.~normal $\fM$-representation) $\rho_H \colon A \to \cB(H)$ and a linear map $t\colon X\to \cB(H)$ such that
	\begin{itemize}
		\item $\rho_H(a)t(\xi)\rho_H(a')  = t(\varphi_X(a)\xi\cdot a')$,
		\item $t(\xi)^*t(\eta) = \rho_H(\langle\xi,\eta\rangle)$
	\end{itemize}
	for all $a,a'\in A$ and $\xi,\eta\in X$.
	We often suppress the map $\rho_H$ from the notation and refer to $X$-representations as pairs $(H, t)$.
\end{definition}

\begin{lemma}\label{l:wstar-cont}
	Let $(H,t)$ be a representation of a correspondence $X$.
	The map $t$ is automatically completely contractive.
	If $X$ is a $\fM$-W*-correspon\-dence, then $t$ is also w*-continuous.
\end{lemma}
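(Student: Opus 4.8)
The statement has two independent parts, and I will treat them separately. For complete contractivity, the plan is to exploit the second defining relation $t(\xi)^*t(\eta)=\rho_H(\langle\xi,\eta\rangle)$, which says that $t$ converts the module inner product into the operator $*$-product. Given a matrix $\Xi=[\xi_{ij}]\in M_n(X)$, I would compute the entries of $[t(\xi_{ij})]^*[t(\xi_{ij})]\in M_n(\cB(H))$ directly: the $(k,l)$ entry is $\sum_i t(\xi_{ik})^*t(\xi_{il})=\rho_H\big(\sum_i\langle\xi_{ik},\xi_{il}\rangle\big)=\rho_H(\langle\Xi,\Xi\rangle_{kl})$, so that $[t(\xi_{ij})]^*[t(\xi_{ij})]=\rho_H^{(n)}(\langle\Xi,\Xi\rangle)$, where $\rho_H^{(n)}$ denotes the $n$-fold amplification of $\rho_H$ and $\langle\Xi,\Xi\rangle$ is the $M_n(A)$-valued inner product. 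Since $\rho_H^{(n)}$ is a $*$-homomorphism, it is contractive, whence $\|[t(\xi_{ij})]\|^2=\|\rho_H^{(n)}(\langle\Xi,\Xi\rangle)\|\le\|\langle\Xi,\Xi\rangle\|=\|\Xi\|_{M_n(X)}^2$. As $n$ is arbitrary, this gives complete contractivity; the case $n=1$ already yields the bound $\|t(\xi)\|\le\|\xi\|$, which I reuse below.

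For w*-continuity, assume $X$ is a W*-correspondence over $\fM$, so that $X$ has a predual $X_*$ and, as part of the W*-module structure \cite[Section 8.5]{BLM}, the inner product is separately w*-continuous; moreover $\rho_H$ is normal by hypothesis. It suffices to show that $\omega\circ t\in X_*$ for every normal functional $\omega\in\cB(H)_*$. Writing $\omega$ as a sum $\omega(S)=\sum_n\langle S\zeta_n,\eta_n\rangle$ with $\sum\|\zeta_n\|^2,\sum\|\eta_n\|^2<\infty$ and using $\|t\|\le1$, the series $\sum_n\langle t(\cdot)\zeta_n,\eta_n\rangle$ converges in norm in $X^*$; since $X_*$ is norm-closed in $X^*$, it is enough to prove that each vector functional $f_{\zeta,\eta}\colon\xi\mapsto\langle t(\xi)\zeta,\eta\rangle$ lies in $X_*$, i.e. is w*-continuous.

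The key computation is a rewriting valid for $\eta$ of the special form $\eta=t(\eta_0)\zeta_0$: using $t(\eta_0)^*t(\xi)=\rho_H(\langle\eta_0,\xi\rangle)$ one obtains $f_{\zeta,\eta}(\xi)=\langle t(\xi)\zeta,t(\eta_0)\zeta_0\rangle=\langle\rho_H(\langle\eta_0,\xi\rangle)\zeta,\zeta_0\rangle$, which is w*-continuous in $\xi$ because $\xi\mapsto\langle\eta_0,\xi\rangle$ is w*-continuous and $\rho_H$ is normal. Hence $f_{\zeta,\eta}\in X_*$ whenever $\eta\in t(X)H$, and by linearity whenever $\eta$ lies in the closed subspace $K=\overline{\operatorname{span}}\,t(X)H$: for $\eta\in K$ choose $\eta_m\to\eta$ in norm with $\eta_m\in\operatorname{span} t(X)H$; then $f_{\zeta,\eta_m}\to f_{\zeta,\eta}$ uniformly on the unit ball of $X$ (again since $\|t(\xi)\|\le\|\xi\|$), so $f_{\zeta,\eta}$ is w*-continuous on the ball and therefore w*-continuous by the Krein--Smulian theorem.

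For a general $\eta\in H$, decompose $\eta=\eta_K+\eta_\perp$ with $\eta_K\in K$ and $\eta_\perp\in K^\perp$; since $t(\xi)\zeta\in K$ we have $f_{\zeta,\eta_\perp}\equiv0$, so $f_{\zeta,\eta}=f_{\zeta,\eta_K}\in X_*$, completing the argument. I expect the main obstacle to be precisely this passage from the dense subspace $t(X)H$ to all of $H$: one must handle vectors orthogonal to $t(X)H$ (where the relation gives no information) and justify the limiting step through uniform boundedness and Krein--Smulian. By contrast, the covariance rewriting and the complete contractivity estimate are routine applications of the two defining relations.
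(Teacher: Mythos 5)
Your proof is correct, but the two halves relate to the paper differently. The complete contractivity argument is essentially the paper's own: the paper computes $\|t(\xi)\|^2=\|\rho(\langle\xi,\xi\rangle)\|\le\|\xi\|^2$ and simply remarks that ``the same computation works for matrices,'' which is exactly the amplified identity $[t(\xi_{ij})]^*[t(\xi_{ij})]=\rho_H^{(n)}(\langle\Xi,\Xi\rangle)$ you spell out. For w*-continuity, however, you take a genuinely different route. The paper notes that $\overline{t(X)}^{w*}\subset\cB(H)$ is a W*-module over $\rho(\fM)$ (hence, via the central support, over $\fM$) and invokes the automatic w*-continuity of bounded module maps between W*-modules \cite[Corollary 8.5.8]{BLM}, so the whole second claim is outsourced to an abstract self-duality result. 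You instead verify directly that $t^*$ maps $\cB(H)_*$ into $X_*$: the reduction to vector functionals $f_{\zeta,\eta}$, the covariance rewriting $f_{\zeta,t(\eta_0)\zeta_0}(\xi)=\langle\rho_H(\langle\eta_0,\xi\rangle)\zeta,\zeta_0\rangle$ (which uses only the separate w*-continuity of the inner product and normality of $\rho_H$), the norm-limit passage to $\overline{\operatorname{span}}\,t(X)H$ (here the norm-closedness of $X_*$ in $X^*$ already suffices; Krein--Smulian is not really needed), and the vanishing on the orthogonal complement are all sound. Your argument is longer but self-contained and makes visible exactly which structural facts are used; the paper's is shorter but leans on a nontrivial citation and on recognizing $\overline{t(X)}^{w*}$ as a W*-module. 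Either is acceptable.
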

\begin{proof}
	We have
	\[
		{\|t(\xi)\|}^2 = \|t(\xi)^*t(\xi)\| = \|\rho(\langle\xi, \xi\rangle)\| \leq \|\langle \xi, \xi\rangle\| = {\|\xi\|}^2
	\]
	and the same computation works for matrices with entries in $X$, so $t$ is completely contractive.

	We now show the w*-continuity.
	The w*-closure of $t(X)$ in $\cB(H)$ is a Hilbert W*-module over $\rho(\fM)$.
	By viewing $\rho(\fM)$ as an ideal in $\fM$, we can regard $\overline{t(X)}^{w*}$ as a Hilbert $\fM$-module.
	Then, the bounded Hilbert $\fM$-module map $t\colon X\to \overline{t(X)}^{w*}$ is w*-continuous by \cite[Corollary 8.5.8]{BLM}.
\end{proof}

For a C*-correspondence, the Cuntz-Toeplitz algebra $\cT_X$ was defined by Pimsner~\cite{pimsner} as the universal C*-algebra generated by images of $A$ and $X$ in representations of $X$.
There are a $*$-homomorphism $\boldsymbol{\rho}\colon A \to \cT_X$ and a linear map $\boldsymbol{t}\colon X\to \cT_X$ with the following universal property.
For all representations $(H, t)$ of $X$, there is a unique representation $\rho_H\rtimes t\colon \cT_X\to \cB(H)$ such that $\rho_H = \rho_H\rtimes t\circ \boldsymbol{\rho}$ and $t = \rho_H\rtimes t\circ \boldsymbol{t}$.
We usually identify $A$ and $X$ with their images via $\boldsymbol{\rho}$ and $\boldsymbol{t}$.
We also identify $\cK(X)$ with its copy inside $\cT_X$ given by $\overline{\boldsymbol{t}(X)\boldsymbol{t}(X)^*}$.

There is a gauge action $\gamma$ of the unit circle $\mathbb{T} = \{z\in \C\colon |z|=1\}$ on $\cT_X$.
The action leaves $A$ invariant and acts on $X$ by $\gamma_z(\xi) = z\xi$ for all $\xi\in X$ and $z\in\mathbb{T}$

Let $I\subset \varphi_X^{-1}(\cK(X)) \subset A$ be an ideal.
Consider the ideal $\mathcal{J}_I\subset \cT_X$ generated by elements $\varphi_X(a)-\boldsymbol{\rho}(a)$ for $a\in I$.
The quotient $\cO_{X, I} = \cT_X/\mathcal{J}_I$ is called the $I$-relative Cuntz-Pimsner algebra.
An $X$-representation $(H, t)$ for which $\rho_H\rtimes t$ factors through $\cO_{X, I}$ is called $I$-relative Cuntz-Pimsner covariant.

The ideal $\cI_X = \varphi_X^{-1}(\cK(X))\cap {(\ker \varphi_X)}^{\perp}$ is called Katsura's ideal.
Katsura~\cite{katsura04} showed that the ideal $\mathcal{J}_{\cI_X}$ is the largest gauge-invariant ideal of $\cT_X$ trivially intersecting $\boldsymbol{\rho}(A)$ and the algebra $\cO_{X} = \cO_{X, \cI_X}$ is called just the Cuntz-Pimsner algebra of $X$.
Consequently, $X$-representations for which $\rho_H\rtimes t$ factors through $\cO_X$ are called Cuntz-Pimsner covariant.

A representation $(H,t)$ of $X$ induces an $A$-linear isometry $t_*\colon X\otimes_A H \to H$ by the formula $t_{*}(\xi\otimes v) = t(\xi) v$ for all $\xi \in X$ and $v\in H$.
Conversely, given an $A$-linear isometry $t_* \colon X\otimes_A H \to H$, we can define a representation $t \colon X\to \cB(H)$ by the formula $t(\xi)v = t(\xi\otimes v)$ for all $\xi\in X$ and $v\in H$.
\begin{proposition}[{\cite[Lemma 2.5]{muhly-solel-wstar}}]\label{p:tens-equiv-rep}
	Let $\rho\colon A\to \cB(H)$ be a representation.
	The assignments $t\leftrightarrow t_*$ above define mutually inverse bijections between the sets of $X$-representations $t\colon X\to \cB(H)$ and $A$-linear isometries $t_*\colon X\otimes_A H\to H$.
	The same applies to a W*-correspondence $X$ over $\fM$ and a normal representation $\rho\colon \fM\to \cB(H)$.
\end{proposition}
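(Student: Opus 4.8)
The plan is to verify that each assignment lands in the right target and satisfies the defining axioms, letting a single inner-product computation carry most of the weight in both directions. I would begin with that core calculation. Given a representation $t$, I extend $t_*(\xi\otimes v)=t(\xi)v$ linearly to the algebraic tensor product $X\odot H$ and compute, for $\zeta=\sum_i \xi_i\otimes v_i$,
\[
  \langle t_*\zeta, t_*\zeta\rangle = \sum_{i,j}\langle v_i, t(\xi_i)^*t(\xi_j)v_j\rangle = \sum_{i,j}\langle v_i, \rho(\langle \xi_i,\xi_j\rangle)v_j\rangle = \langle \zeta,\zeta\rangle,
\]
using only the relation $t(\xi)^*t(\eta)=\rho(\langle\xi,\eta\rangle)$ and the definition of the tensor inner product. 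This shows $t_*$ is isometric for the semi-inner product on $X\odot H$; in particular it annihilates the null vectors, which include the balancing elements $\xi a\otimes v-\xi\otimes\rho(a)v$, so it descends to the separated quotient and extends continuously to a genuine isometry on the completion $X\otimes_A H$. Read backwards, the same chain shows that if one starts from an $A$-linear isometry $t_*$ and sets $t(\xi)v=t_*(\xi\otimes v)$, then $\langle w, t(\eta)^*t(\xi)v\rangle=\langle \eta\otimes w,\xi\otimes v\rangle=\langle w,\rho(\langle\eta,\xi\rangle)v\rangle$ for all $v,w$, which is exactly the inner-product axiom for $t$; boundedness of each $t(\xi)$ is immediate from isometry of $t_*$ and the norm bound $\|\xi\otimes v\|\le\|\xi\|\,\|v\|$.

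Next I would match the two notions of linearity via the module relations. In the forward direction I must produce the $A$-linearity $t_*(\varphi(a)\xi\otimes v)=\rho(a)t_*(\xi\otimes v)$, equivalently the left-module identity $t(\varphi(a)\xi)=\rho(a)t(\xi)$. The defining axiom only supplies the two-sided version $\rho(a)t(\xi)\rho(a')=t(\varphi(a)\xi a')$, so I would insert an approximate identity $(e_\lambda)$ for $A$: since $\rho$ is non-degenerate, $\rho(e_\lambda)\to 1$ strongly, while $\varphi(a)\xi\,e_\lambda\to\varphi(a)\xi$ in $X$ and $t$ is norm-continuous by Lemma~\ref{l:wstar-cont}, so passing to the limit in $\rho(a)t(\xi)\rho(e_\lambda)=t(\varphi(a)\xi\,e_\lambda)$ yields the identity. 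In the reverse direction I recover the full covariance relation from $A$-linearity of $t_*$ together with the balancing relation: writing $(\varphi(a)\xi)a'\otimes v=\varphi(a)\xi\otimes\rho(a')v=a\cdot(\xi\otimes\rho(a')v)$ and applying $A$-linearity gives $t(\varphi(a)\xi a')v=\rho(a)t(\xi)\rho(a')v$. (The right-module identity $t(\xi a)=t(\xi)\rho(a)$ is not needed separately, as it is already built into the balancing relation that $t_*$ respects, but it can also be checked directly by expanding $\|t(\xi a)-t(\xi)\rho(a)\|^2$ via the inner-product axiom.)

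That the two assignments are mutually inverse is then immediate, since on simple tensors both are governed by the single formula $t(\xi)v=t_*(\xi\otimes v)$. Finally, for the W*-case I would observe that the entire argument runs verbatim once the tensor product is read as $X\otimes_{\fM}H$: this is again a Hilbert space with the same inner-product formula, and it is a normal $\fM$-representation by the remark preceding the statement. Here $\fM$ is unital, so the approximate-identity step collapses to taking $a'=1$, the reconstructed $t$ is automatically w*-continuous by Lemma~\ref{l:wstar-cont}, and conversely a w*-continuous $t$ produces a normal $\fM$-linear isometry $t_*$.

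I expect the only genuinely delicate points to be the passage from the algebraic tensor product to its completion, which is controlled entirely by the isometry computation of the first paragraph, and the non-unital left-module identity, whose limiting argument rests on non-degeneracy of $\rho$ and continuity of $t$. Everything else is routine manipulation of the two defining relations.
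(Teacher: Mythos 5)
Your proof is correct: the paper itself gives no argument for this proposition, importing it by citation from Muhly--Solel, and your isometry computation on the algebraic tensor product, the approximate-identity passage to the one-sided module relation, and the polarization argument in the reverse direction constitute exactly the standard proof of that cited lemma. No gaps.
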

\begin{construction}\label{con:Fock}
	Let $M$ be an $A$-representation.
	Consider the \emph{$X$-Fock Hilbert space} $M^X = \bigoplus_{k \geq 0} X^{\otimes k}\otimes_A M$ where we set $X^{\otimes 0} = A$.
	Observe that $X\otimes_{A} M^X = \bigoplus_{k\geq 1} X^{\otimes k}\otimes_A M$ as an $A$-representation and denote by $(s_M)_*\colon X\otimes_{A} M^X \to M^X$ the tautological inclusion isometry.
	The isometry defines an $X$-representation $(M^X, s_M)$ by Proposition~\ref{p:tens-equiv-rep}.
	A representation which is unitarily equivalent to one of the form $(M^X, s_M)$ is called \emph{induced}.

	This construction applies also when $X$ is a W*-correspondence over a von Neumann algebra $\fM$ and $M$ is a normal $\fM$-representation.
\end{construction}

\begin{definition}\label{d:dilation}
	An \emph{(isometric) dilation} of an $X$-representation $(H, t)$ is a triple $(K,s,V)$, where $(K,s)$ is an $X$-representation and $V\colon H\to K$ is an $A$-linear isometry such that
	\[
		V^*s(\xi)V = t(\xi)
	\]
	for all $\xi\in X$.
	The dilation $(K, s, V)$ is called \emph{trivial} if $VH \subset K$ is $X$-reducing which means that $VH$ and $(VH)^\perp$ are invariant under $s(\xi)$ for all $\xi\in X$.

	The representation $(H, t)$ is called \emph{maximal} if it does not admit non-trivial dilations.
	The correspondence $X$ is called \emph{hyperrigid} if all its Cuntz-Pimsner covariant representations are maximal.

	The definitions above also apply to the case when $X$ is a W*-correspon\-dence without any modifications.
\end{definition}

\begin{remark}
	In this work, we assume all representations and dilations to be isometric.
	This assumption does not result in any loss of generality, as Muhly and Solel proved that every completely contractive representation admits an isometric dilation \cite{muhly-solel-dilation}.
	Therefore, the notion of maximality remains the same when considering only isometric dilations.
\end{remark}

\begin{lemma}\label{l:isometric-dil}
	If $(K, s, V)$ is a dilation of $(H, t)$, then
	\[
		s(\xi)V = Vt(\xi)
	\]
	for all $\xi\in X$.
\end{lemma}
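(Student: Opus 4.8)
The plan is to promote the compression identity $V^*s(\xi)V = t(\xi)$ to the full operator identity $s(\xi)V = Vt(\xi)$ by a norm comparison, using the orthogonal projection $P = VV^*$ of $K$ onto the subspace $VH$. Write $\rho_H$ and $\rho_K$ for the left actions underlying $(H,t)$ and $(K,s)$. Since $V^*V = \mathrm{id}_H$, the dilation hypothesis gives $Vt(\xi) = V\bigl(V^*s(\xi)V\bigr) = Ps(\xi)V$. Hence it suffices to show that $s(\xi)V$ already maps $H$ into $VH$, that is, $(\mathrm{id}_K - P)s(\xi)Vv = 0$ for every $v \in H$; equivalently, that $\|Ps(\xi)Vv\| = \|s(\xi)Vv\|$.

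First I would compute $\|s(\xi)Vv\|$. Using the representation relation $s(\xi)^*s(\xi) = \rho_K(\langle \xi, \xi\rangle)$ together with the $A$-linearity of $V$ (which gives $V\rho_H(a) = \rho_K(a)V$ and, upon taking adjoints, $V^*\rho_K(a) = \rho_H(a)V^*$), I obtain
\[
  \|s(\xi)Vv\|^2 = \langle V^*\rho_K(\langle \xi,\xi\rangle)Vv,\, v\rangle = \langle \rho_H(\langle \xi,\xi\rangle)V^*Vv,\, v\rangle = \langle \rho_H(\langle \xi,\xi\rangle)v,\, v\rangle = \|t(\xi)v\|^2,
\]
where the last equality uses $t(\xi)^*t(\xi) = \rho_H(\langle \xi, \xi\rangle)$. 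On the other hand, since $V$ is isometric and $Ps(\xi)V = Vt(\xi)$, one also has $\|Ps(\xi)Vv\| = \|Vt(\xi)v\| = \|t(\xi)v\|$.

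Comparing the two computations yields $\|Ps(\xi)Vv\| = \|s(\xi)Vv\|$ for all $v \in H$. By the Pythagorean decomposition $\|s(\xi)Vv\|^2 = \|Ps(\xi)Vv\|^2 + \|(\mathrm{id}_K - P)s(\xi)Vv\|^2$, this forces $(\mathrm{id}_K - P)s(\xi)Vv = 0$, so $s(\xi)Vv = Ps(\xi)Vv = Vt(\xi)v$ for all $v$, which is the claim. The only point requiring care is the manipulation in the norm computation: one must invoke the $A$-linearity of $V$ to pull $\rho_K(\langle\xi,\xi\rangle)$ across $V^*$, after which everything collapses because $V^*V = \mathrm{id}_H$. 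There is no substantial obstacle here — the essence is simply that the dilation and isometry relations already pin down the norm of $s(\xi)Vv$, and a vector whose norm equals that of its projection must lie in the range of that projection.
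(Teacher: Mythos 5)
Your argument is correct and is essentially the paper's own proof: the paper shows directly that $T=(1-VV^*)s(\xi)V$ satisfies $T^*T = V^*\rho_K(\langle\xi,\xi\rangle)V - \rho_H(\langle\xi,\xi\rangle) = 0$ using $A$-linearity of $V$, which is exactly your norm computation $\|(1-P)s(\xi)Vv\|^2 = \|s(\xi)Vv\|^2 - \|Ps(\xi)Vv\|^2 = 0$ recast at the operator level. No substantive difference.
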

\begin{proof}
	We have $s(\xi)V = VV^*s(\xi)V + (1 - VV^*)s(\xi)V = Vt(\xi) + (1- VV^*)s(\xi)V$ so it is enough to show that $T = (1-VV^*)s(\xi)V = 0$.
	We compute
	\[
		T^*T = V^*s(\xi)^*(1-VV^*)s(\xi)V = V^*\rho_K(\langle\xi, \xi\rangle)V - \rho_H(\langle\xi, \xi\rangle) = 0,
	\]
	where the last equality follows by the $A$-linearity of $V$.
\end{proof}

\begin{proposition}\label{p:uep-equiv}
	Let $X$ be a C*-correspondence over $A$.
	Consider the operator algebra $\cT_X^+\subset \cT_X$ generated by $A, X\subset \cT_X$ and the operator system $\mathcal{S}_X = X^* + A + X \subset \cT_X$.
	The following are equivalent for a representation $(H, t)$ of $X$:
	\begin{enumerate}
		\item $(H,t)$ is maximal;\label{p:uep-equiv:X}
		\item $\rho_H\rtimes t|_{\mathcal{S}_X}$ satisfies the UEP;\label{p:uep-equiv:S}
		\item $\rho_H\rtimes t|_{\cT_X^+ + (\cT_X^+)^*}$ satisfies the UEP.\label{p:uep-equiv:T}
	\end{enumerate}
\end{proposition}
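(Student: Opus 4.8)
The plan is to show that all three notions are governed by a single object: dilations of the $*$-representation $\Phi = \rho_H\rtimes t$ of $\cT_X$. Writing $\varphi = \Phi|_{\mathcal{S}_X}$ and $\psi = \Phi|_{\cT_X^+ + (\cT_X^+)^*}$, I would prove the cycle \ref{p:uep-equiv:X} $\Rightarrow$ \ref{p:uep-equiv:S} $\Rightarrow$ \ref{p:uep-equiv:T} $\Rightarrow$ \ref{p:uep-equiv:X}. The conceptual core is a dictionary between $X$-dilations of $(H,t)$ and $*$-representation dilations of $\Phi$ compressed to these two operator systems, under which \emph{trivial} $X$-dilations correspond exactly to \emph{reducing} subspaces.

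First I would set up this dictionary. Given an $X$-dilation $(K,s,V)$ with $\Psi = \rho_K\rtimes s$, iterating Lemma~\ref{l:isometric-dil} (together with the assumed $A$-linearity of $V$) yields $\Psi(b)V = V\Phi(b)$ for every product $b$ of elements of $A$ and $X$, hence $V^*\Psi(b)V = \Phi(b)$ first for all $b\in\cT_X^+$ and then, taking adjoints, for all $b\in\cT_X^+ + (\cT_X^+)^*$ (a fortiori on $\mathcal{S}_X$). Thus every $X$-dilation is a $*$-representation dilation of both $\varphi$ and $\psi$. Conversely, a $*$-representation dilation $(\pi,W)$ of $\varphi$, with $\pi$ acting on $L$, produces the $X$-dilation $(L,\pi\circ\boldsymbol{t},W)$ via the universal property of $\cT_X$; here $A$-linearity of $W$ is automatic, because $W^*\pi(\cdot)W$ restricts to the $*$-homomorphism $\rho_H$ on $A$, so the multiplicative-domain argument forces $\pi(a)W = W\rho_H(a)$. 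Finally, triviality matches on both sides: by Lemma~\ref{l:isometric-dil} the subspace $VH$ is always invariant under $\rho_K(A)$ and $s(X)$, so it is $X$-reducing precisely when it is also invariant under $s(X)^*$, i.e.\ reducing for $\Psi(\cT_X)$ — which, since $\mathcal{S}_X$ generates $\cT_X$, is exactly triviality of the corresponding dilation of $\varphi$ or $\psi$.

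With the dictionary in hand the cycle is short. For \ref{p:uep-equiv:X} $\Rightarrow$ \ref{p:uep-equiv:S}, take any u.c.p.\ extension $\Phi'$ of $\varphi$ to $\cT_X$ and dilate it minimally to a $*$-representation $\pi$ on $L\supseteq H$ (Stinespring); then $(\pi,V)$ is a $*$-representation dilation of $\varphi$, hence an $X$-dilation, hence trivial by maximality, so $VH$ reduces $\pi$ and $\Phi' = V^*\pi(\cdot)V$ is a $*$-homomorphism agreeing with $\Phi$ on the generating set $\mathcal{S}_X$; therefore $\Phi' = \Phi$, proving $\Phi$ is the unique u.c.p.\ extension, i.e.\ the UEP. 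The implication \ref{p:uep-equiv:S} $\Rightarrow$ \ref{p:uep-equiv:T} is pure monotonicity: since $\mathcal{S}_X\subseteq \cT_X^+ + (\cT_X^+)^*$, any u.c.p.\ extension of $\psi$ restricts to one of $\varphi$, so uniqueness for the smaller system forces it for the larger. For \ref{p:uep-equiv:T} $\Rightarrow$ \ref{p:uep-equiv:X}, given any $X$-dilation $(K,s,V)$ the compression $V^*\Psi(\cdot)V$ is, by the dictionary, a u.c.p.\ extension of $\psi$ to $\cT_X$; by the UEP it equals $\Phi$ and is in particular multiplicative, which by the multiplicative-domain argument forces $VH$ to reduce $\Psi(\cT_X)$, so the dilation is trivial and $(H,t)$ is maximal.

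I expect the main obstacle to be reconstructing the $X$-representation data from an abstract operator-system dilation — concretely, proving $A$-linearity of the dilating isometry and controlling the higher tensor powers sitting inside $\cT_X^+$. Both are handled by the same two tools: the multiplicative-domain identity $\pi(a)W = W\rho_H(a)$, coming from $\varphi|_A$ being a $*$-homomorphism, and the iterated intertwining $\Psi(b)V = V\Phi(b)$ from Lemma~\ref{l:isometric-dil}. Once these are in place, the fact that $\mathcal{S}_X$ already generates $\cT_X$ collapses the operator-system notion of triviality onto $X$-reducibility, and the passage from a general u.c.p.\ extension to a $*$-representation via Stinespring does the rest.
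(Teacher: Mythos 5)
Your argument is correct and follows essentially the same route as the paper: the same cycle of implications, the iterated use of Lemma~\ref{l:isometric-dil} to turn $X$-dilations into dilations of $\rho_H\rtimes t|_{\cT_X^+ + (\cT_X^+)^*}$, monotonicity of the UEP under the inclusion $\mathcal{S}_X\subset \cT_X^+ + (\cT_X^+)^*$, and the Stinespring/multiplicative-domain mechanism that the paper outsources to Arveson's equivalence of maximality with the UEP. The only step you omit is the reduction to the unital case when $A$ (hence $\cT_X$) is non-unital, which the paper performs first by passing to C*-unitizations via \cite[Proposition 2.4]{Salomon19}; without it the notions of u.c.p.\ map, UEP, and Stinespring dilation you invoke are not literally available.
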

\begin{proof}
	If $A$ is non-unital, then so is $\cT_X$ and by \cite[Proposition 2.4]{Salomon19} $\pi_{(H, t)}$ has the UEP with respect to $\cT_X^+$ or $\mathcal{S}_X$ if and only if its C*-unitization has the UEP.
	Therefore, we may assume that $A$ and  $\cT_X$ are unital.
	By \cite[Proposition 2.2]{arveson-note}, a representation has the UEP if and only if it is maximal.

	Suppose that \ref{p:uep-equiv:X} holds and $(H, t)$ is maximal.
	Since $A, X\subset \mathcal{S}_X$ and $A, X \subset \cT_X^+$, then any $\cT_X^+$-dilation or $\mathcal{S}_X$-dilation of $(H, t)$ is also an $X$-dilation and thus trivial.
	Therefore, \ref{p:uep-equiv:X} implies \ref{p:uep-equiv:T} and \ref{p:uep-equiv:S}.

	Analogously, $\mathcal{S}_X\subset \cT_X^+ + {(\cT_X^+)}^*$ and thus if a representation has UEP with respect to $\mathcal{S}_X$, then it also has the UEP with respect to $\cT_X^+$.
	This proves \ref{p:uep-equiv:S}$\Rightarrow$\ref{p:uep-equiv:T}.

	Finally, suppose that \ref{p:uep-equiv:T} holds.
	Let $(K, s, V)$ be a dilation of $(H, t)$.
	By an inductive application of Lemma~\ref{l:isometric-dil}, we have
	\[
		V^* s(\xi_1)s(\xi_2)\cdots s(\xi_n) V = t(\xi_1)t(\xi_2)\cdots t(\xi_n)
	\]
	for all $\xi_1,\xi_2,\dots,\xi_n\in X$.
	Since $\cT_X^+$ is linearly spanned by $A$ and elements of the form $\xi_1\cdots\xi_n$, it follows that $(\rho_K\ltimes s, V)$ is a $\cT_X^+$-dilation of $\rho_H\ltimes t$ and therefore it is trivial.
	We conclude that \ref{p:uep-equiv:T} implies \ref{p:uep-equiv:X}.
\end{proof}

\section{Second dual correspondence}\label{s:second-dual}
For a C*-algebra $A$, its second dual $\ssA$ is naturally endowed with the structure of a von Neumann algebra (see \cite[III.5.2.8]{Blackadar}).
It is universal in the following sense, if $\psi \colon A \to \fM$ is a
$*$-homomorphism into a von Neumann algebra $\fM$, then $\psi$ extends uniquely to a normal homomorphism $\tilde{\psi}\colon \ssA \to \fM$.
For example, if $\rho\colon A\to \cB(H)$ is a representation, then $\tilde\rho \colon \ssA \to \cB(H)$ is a normal representation.
Therefore, there is a one-to-one correspondence between representations of $A$ and normal representations of $\ssA$.

The multiplier algebra $\mathcal{M}(A)$ can be naturally embedded into $\ssA$ (see \cite{MultInSS}).
Consequently, if $B$ is another C*-algebra and $\phi\colon A\to \mathcal{M}(B) \subset \ssB$ is a $*$-homomorphism, then there is a unique normal homomorphism $\tilde\phi \colon \ssA\to \ssB$, which is unital if and only if $\phi(A)B = B$.

If $\psi\colon \fM \to \fN$ is a normal $*$-homomorphism, then there is a unique central projection $P_\psi \in Z(\fM)$ called \emph{the support projection of $\psi$} such that $\ker \psi = \fM(1-P_\psi)$ and $\psi(\fM) \cong \fM P_\psi$.
For a $*$-homomorphism $\phi\colon A\to \mathcal{M}(B)$, we define the support projection of $\phi$ to be $P_{\phi} = P_{\tilde \phi} \in \ssA$.
For an ideal $\iota_I\colon I\hookrightarrow B$, we use the notation $P_{I} = \tilde\iota_I(1) \in \ssB$.
It is the support projection of the associated map $B\to \mathcal{M}(I)$.

If $X$ is an $A$-$B$-correspondence, then by \cite[Proposition 8.5.17]{BLM}, $\ssX$ is a $W^*$-module over $\ssB$ such that $\cL(\ssX) = \cK(X)^{**}$.
The map $\varphi_X \colon A\to \cL(X) = \mathcal{M}(\cK(X))$ lifts uniquely to the normal map $\varphi_{\ssX} = \tilde\varphi_X  \colon \ssA \to \cL(\ssX)$ which endows $\ssX$ with a structure of a W*-correspondence.
We define the support projection of $X$ to be $P_X = P_{\varphi_{X}}\in \ssA$.

\begin{lemma}\label{l:ssPI}
	Let $X$ be an $A$-$B$-correspondence and $I$ be an ideal.
	Then, $\Ss{(\varphi(I)X)} = \tilde\varphi(P_I)\ssX \subset \ssX$.
	In particular, if $H$ is an $A$-representation, then $\rho_H(I)H = \tilde\rho_H(P_I)H$.
\end{lemma}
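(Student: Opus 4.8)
The plan is to identify both sides as weak*-closed submodules of $\ssX$ and then prove the two inclusions separately. Here $I\subset A$ is a (two-sided) ideal, $I^{**}$ is a weak*-closed summand of $\ssA$, and $P_I\in\ssA$ is the central projection which serves as the unit of $I^{**}$; equivalently, $P_I a = a P_I = a$ for every $a\in I$, and $P_I$ is the weak* limit in $\ssA$ of any approximate identity $(e_\lambda)$ of $I$. Writing $Y = \overline{\varphi(I)X}$ for the norm-closed subcorrespondence (so that $\Ss{(\varphi(I)X)}=\Ss{Y}$ under the usual convention that $\varphi(I)X$ denotes its closed span), the standard biduality for closed subspaces lets me view $\Ss{Y}$ inside $\ssX$ as the weak* closure $\overline{Y}^{w*}$. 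On the other side, since $\tilde\varphi(P_I)$ is an adjointable projection in $\cL(\ssX)=\cK(X)^{**}$, its range $\tilde\varphi(P_I)\ssX$ is orthogonally complemented, hence weak*-closed.

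For the inclusion $\Ss{(\varphi(I)X)}\subseteq \tilde\varphi(P_I)\ssX$, I would simply observe that for $a\in I$ and $\xi\in X$ one has $\varphi(a)\xi = \tilde\varphi(P_I a)\xi = \tilde\varphi(P_I)\varphi(a)\xi\in\tilde\varphi(P_I)\ssX$, using $P_I a = a$ together with multiplicativity of the normal extension $\tilde\varphi$ and $\tilde\varphi|_A=\varphi$. Thus $\varphi(I)X\subseteq \tilde\varphi(P_I)\ssX$, and taking weak* closures (the right-hand side already being weak*-closed) yields $\overline{Y}^{w*}\subseteq\tilde\varphi(P_I)\ssX$.

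For the reverse inclusion I would exploit normality. Fixing an approximate identity $(e_\lambda)$ of $I$, we have $e_\lambda\to P_I$ weak* in $\ssA$, hence $\varphi(e_\lambda)=\tilde\varphi(e_\lambda)\to\tilde\varphi(P_I)$ weak* in $\cL(\ssX)$. Each $\varphi(e_\lambda)\in\cL(\ssX)$ is weak*-continuous and satisfies $\varphi(e_\lambda)X\subseteq\varphi(I)X\subseteq Y$; since $X$ is weak*-dense in $\ssX$ and $\overline{Y}^{w*}$ is weak*-closed, this forces $\varphi(e_\lambda)\ssX\subseteq\overline{Y}^{w*}$. If, for each fixed $\eta\in\ssX$, one knows that $\varphi(e_\lambda)\eta\to\tilde\varphi(P_I)\eta$ weak* in $\ssX$, then the limit $\tilde\varphi(P_I)\eta$ lies in the weak*-closed set $\overline{Y}^{w*}$, giving $\tilde\varphi(P_I)\ssX\subseteq\overline{Y}^{w*}$ and hence equality. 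The ``in particular'' statement is then the special case $B=\C$, where $X=H$ is a Hilbert space, $\ssX=H$, $\cL(\ssX)=\cK(H)^{**}=\cB(H)$, $\varphi_{\ssX}=\tilde\rho_H$, and $\Ss{(\rho_H(I)H)}=\overline{\rho_H(I)H}$.

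The main obstacle is precisely the last implication used above: passing from weak* convergence of the operators $\varphi(e_\lambda)$ in $\cL(\ssX)$ to weak* convergence of the vectors $\varphi(e_\lambda)\eta$ in $\ssX$. This is not immediate from the definition of the weak* topology on a module, but it follows from separate weak* continuity of multiplication in the linking von Neumann algebra $\begin{pmatrix}\cL(\ssX)&\ssX\\ \ssX^*&\ssB\end{pmatrix}$: realizing $\eta$ as an off-diagonal entry and each $\tilde\varphi(a)$ as a corner entry, the product lands in the off-diagonal corner and depends weak*-continuously on $\tilde\varphi(a)$, so $\tilde\varphi(e_\lambda)\eta\to\tilde\varphi(P_I)\eta$ weak*. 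I would isolate this compatibility of the weak* topologies (via the linking algebra, or equivalently self-duality of $\ssX$) as the technical heart of the argument; everything else reduces to normality of $\tilde\varphi$ and the characterization of $P_I$ through approximate identities.
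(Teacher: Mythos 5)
Your proof is correct and takes essentially the same route as the paper's: the forward inclusion from $P_I a = a$ for $a \in I$, and the reverse inclusion from the fact that $P_I$ is a w*-limit of elements of $I$ (an approximate identity) combined with w*-continuity of the module action. The paper's version is just terser (it even misstates the reverse inclusion in what is evidently a typo); your explicit justification of the separate w*-continuity via the linking von Neumann algebra supplies the detail the paper leaves implicit.
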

\begin{proof}
	The subspace $\Ss{(\varphi(I)X)}$ is a w*-closure of $\varphi(I)X$ in $\ssX$.
	Since $P_II = I$ in $\ssA$, it follows that $\Ss{(\varphi(I)X)}\subset \tilde\varphi(P_I)\ssX$.
	On the other hand, $P_I$ is in the w*-closure of $I$ in $\ssA$, which implies $\Ss{(\varphi(I)X)}\subset \tilde\varphi(P_I)\ssX$.
	This proves the first claim.
	The second claim follows from the first one by considering $H$ as an $A$-$\C$-correspondence and from the fact that Hilbert spaces are reflexive.
\end{proof}

Thanks to the universal property of the second dual, there is a bijection between $*$-representations $\rho\colon A\to B(H)$ and normal $*$-representations $\tilde\rho \colon \ssA \to \cB(H)$.
This can be extended to correspondences as follows.
Since $X$ is w*-dense in $\ssX$, every linear map $t\colon X\to \cB(H)$ extends uniquely to a w*-continuous map $\tilde t\colon \ssX\to \cB(H)$.
Because the left and right actions of $\ssA$ on $\ssX$ and the inner product are w*-continuous, the maps $\tilde\rho_H$ and $\tilde t$ define a representation of $\ssX$ on $H$.
On the other hand, we can recover $\rho_H$ and $t$ by restricting to w*-dense subspaces $A$ and $X$.
Thus, we have proved the following.
\begin{lemma}
	The assignments $(H, \rho_H, t) \leftrightarrow (H, \tilde\rho_h, \tilde t)$ define a bijection between the sets of $X$-representations and $X^{**}$-representations on $H$.
\end{lemma}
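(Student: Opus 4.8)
The plan is to write down the two assignments explicitly and verify that they are mutually inverse, with the substance of the argument being that the extension of an $X$-representation really is an $\ssX$-representation. For the forward direction, start from an $X$-representation $(H,\rho_H,t)$. The non-degenerate representation $\rho_H\colon A\to\cB(H)$ extends uniquely to a normal unital representation $\tilde\rho_H\colon \ssA\to\cB(H)$ by the universal property of the second dual recalled above. The map $t$ is bounded by Lemma~\ref{l:wstar-cont}, and $\cB(H)$ is a dual Banach space (the dual of the trace-class operators); by the standard fact that a bounded map into a dual space extends uniquely and w*-continuously to the bidual of its domain, we obtain a unique w*-continuous linear extension $\tilde t\colon \ssX\to\cB(H)$ of $t$, using that $X$ is w*-dense in $\ssX$.

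The main step is to check the two defining identities
\[
  \tilde\rho_H(a)\tilde t(\xi)\tilde\rho_H(a')=\tilde t(\varphi_{\ssX}(a)\,\xi\cdot a'),\qquad \tilde t(\xi)^*\tilde t(\eta)=\tilde\rho_H(\langle\xi,\eta\rangle)
\]
for all $a,a'\in\ssA$ and $\xi,\eta\in\ssX$. Both hold on the w*-dense subsets $A\subset\ssA$ and $X\subset\ssX$ since $(\rho_H,t)$ is an $X$-representation. To propagate them I would use that the left and right $\ssA$-actions on $\ssX$ and the inner product are separately w*-continuous, that $\tilde\rho_H$ is normal, and that multiplication and the adjoint $T\mapsto T^*$ on $\cB(H)$ are (separately) w*-continuous. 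A double-density argument then applies: fixing one argument in the dense subspace and letting the other run through a net converging w* to a point of $\ssX$ yields the identity with that argument in $\ssX$, and repeating with the roles of the two arguments exchanged gives it in full generality. Hence $(H,\tilde\rho_H,\tilde t)$ is a normal $\ssX$-representation.

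For the reverse assignment, take a normal $\ssX$-representation $(H,\sigma,s)$ and set $\rho_H=\sigma|_A$, $t=s|_X$. The correspondence identities restrict verbatim, and non-degeneracy of $\rho_H$ follows from unitality of $\sigma$: an approximate unit of $A$ converges w* to $1\in\ssA$, so $\rho_H(e_\lambda)=\sigma(e_\lambda)\to\sigma(1)=1_H$ in the w*-topology and hence $\overline{\rho_H(A)H}=H$. The two assignments are mutually inverse by the uniqueness clauses: restricting $\tilde\rho_H$ and $\tilde t$ to $A$ and $X$ recovers $\rho_H$ and $t$, since the former extend the latter, while extending $\sigma|_A$ and $s|_X$ recovers $\sigma$ and $s$, since the latter are already normal, respectively w*-continuous, and extend their own restrictions. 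I expect the separate-continuity bookkeeping of the second paragraph to be the only real obstacle, the subtlety being that the relation $\tilde t(\xi)^*\tilde t(\eta)=\tilde\rho_H(\langle\xi,\eta\rangle)$ is quadratic and involves the adjoint, so one cannot extend it by a single continuous map but must pass through separate w*-continuity in each variable.
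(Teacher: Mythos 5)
Your proposal is correct and follows the same route as the paper: extend $\rho_H$ via the universal property of $\ssA$, extend $t$ by w*-density of $X$ in $\ssX$, verify the correspondence identities by separate w*-continuity of the module actions, the inner product, and the operations on $\cB(H)$, and invert by restriction plus uniqueness of normal/w*-continuous extensions. The paper states this in one terse paragraph preceding the lemma; your write-up merely supplies the double-density bookkeeping it leaves implicit.
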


The following result reduces the study of dilations for C*-correspon\-dences to W*-correspondences.
This is the key result to prove Theorem~\ref{t:criterion}, which is the main theorem.
\begin{proposition}\label{p:ss-equiv}
	A triple $(K, s, V)$ is a dilation of $(H, t)$ if and only if $(K, \tilde s, V)$ is a dilation of $(H, \tilde t)$.
	In particular, $(H, t)$ is maximal if and only if $(H, \tilde t)$ is maximal.
\end{proposition}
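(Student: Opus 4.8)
The plan is to show that the two dilation notions coincide by leveraging the fact that the extension $t \mapsto \tilde t$ is the unique w*-continuous extension to $\ssX$, and that isometric embeddings $V\colon H \to K$ interact well with this extension. The crucial observation is that the defining condition $V^* s(\xi) V = t(\xi)$ for all $\xi \in X$ should automatically propagate to $V^* \tilde s(y) V = \tilde t(y)$ for all $y \in \ssX$, and conversely restricting from $\ssX$ back to $X$ recovers the original condition. Since the previous lemma already establishes that $(H, \rho_H, t) \leftrightarrow (H, \tilde\rho_H, \tilde t)$ is a bijection on representations, and that $V$ is $A$-linear if and only if it is $\ssA$-linear (because $\tilde\rho_K(a) V = V \tilde\rho_H(a)$ for $a \in A$ extends by normality to all of $\ssA$), the main content is matching up the compression conditions and the triviality conditions.

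First I would handle the forward direction. Assume $(K, s, V)$ is a dilation of $(H, t)$. The $A$-linearity of $V$ means $\tilde\rho_K(a) V = V \tilde\rho_H(a)$ for $a \in A$; since both sides are w*-continuous in $a$ and agree on the w*-dense subalgebra $A$, they agree on all of $\ssA$, so $V$ is $\ssA$-linear. For the compression identity, consider the map $y \mapsto V^* \tilde s(y) V$ on $\ssX$. This is w*-continuous (as $\tilde s$ is w*-continuous and multiplication by the fixed bounded operators $V^*, V$ preserves w*-limits), and it agrees with $\tilde t$ on the w*-dense subspace $X$ because $V^* \tilde s(\xi) V = V^* s(\xi) V = t(\xi) = \tilde t(\xi)$ there. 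Two w*-continuous maps agreeing on a w*-dense set are equal, so $V^* \tilde s(y) V = \tilde t(y)$ for all $y \in \ssX$, giving that $(K, \tilde s, V)$ is a dilation of $(H, \tilde t)$. The converse is immediate by restriction: if $(K, \tilde s, V)$ is a dilation of $(H, \tilde t)$, then restricting the identity to $\xi \in X \subset \ssX$ and using $\tilde s|_X = s$, $\tilde t|_X = t$ recovers $V^* s(\xi) V = t(\xi)$.

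It remains to match the triviality conditions, which is needed for the ``in particular'' clause about maximality. Triviality of $(K, s, V)$ says $VH$ is $X$-reducing, i.e.\ $s(\xi)$ and $s(\xi)^*$ leave $VH$ invariant for all $\xi \in X$. I would argue that $VH$ is reducing for $\{s(\xi) : \xi \in X\}$ if and only if it is reducing for $\{\tilde s(y) : y \in \ssX\}$: invariance of a w*-closed subspace (equivalently, a fixed projection commuting with the operators) under a w*-dense generating set extends to the w*-closure by w*-continuity of $\tilde s$ and the fact that $\tilde s(y)^* = \tilde s(y^*)$ where $y^* \in \ssX^* = \Ss{(X^*)}$ is handled analogously. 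Concretely, $VH$ being $s(X)$-reducing means the projection $VV^*$ commutes with each $s(\xi)$ and each $s(\xi)^*$; since $y \mapsto \tilde s(y)$ is w*-continuous and $VV^*$ is a fixed operator, the commutation $[VV^*, \tilde s(y)] = 0$ holds on the w*-closure $\ssX$ of $X$, giving $VH$ is $\ssX$-reducing. The reverse is trivial by restriction. Hence $(K, s, V)$ is trivial if and only if $(K, \tilde s, V)$ is trivial, and combined with the dilation equivalence this yields that $(H, t)$ admits a nontrivial dilation if and only if $(H, \tilde t)$ does, i.e.\ the maximality statements coincide.

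The main obstacle I anticipate is purely technical bookkeeping around w*-continuity: one must be careful that the adjoint operation and the extension $t \mapsto \tilde t$ are compatible, and that the notion of ``$\ssX$-reducing'' used in the triviality condition genuinely reduces to checking commutation of the single projection $VV^*$ with the w*-continuous family $\tilde s(\ssX)$. Once the identification of $X$-reducing and $\ssX$-reducing subspaces via w*-density is made precise, everything else follows from the uniqueness of w*-continuous extensions established just before the statement.
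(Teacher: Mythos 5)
Your proof is correct and follows essentially the same route as the paper: both directions rest on the w*-continuity of $\tilde s$, $\tilde t$ and the separate w*-continuity of operator multiplication, so that the compression identity extends from the w*-dense subspace $X$ to all of $\ssX$. Your explicit verification that triviality (commutation of $VV^*$ with $s(X)$, hence with $\tilde s(\ssX)$) is preserved under the bijection is a detail the paper leaves implicit, but it is the same w*-density argument.
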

\begin{proof}
	By construction, we have $\tilde s (\xi) = s(\xi)$ and $\tilde t(\xi) = t(\xi)$ for all $\xi \in X$.
	Therefore, if $(K, \tilde s, V)$ is a dilation of $(H,\tilde t)$, then $(K, s, V)$ is a dilation of $(H, t)$.

	On the other hand, suppose that $(K,s, V)$ is a dilation of $(H,t)$.
	The multiplication of bounded operators is separately w*-continuous.
	Therefore, if $\xi\in \ssX$ is an arbitrary element and $\{\xi_\lambda\}_{\lambda\in \Lambda}$ is a net in $X$ which converges to $\xi$ in the w*-topology, then
	\[
		V^*\tilde s(\xi)V = \operatorname*{w*-lim}_{\lambda\in \Lambda} V^* s(\xi_\lambda)V = \operatorname*{w*-lim}_{\lambda\in \Lambda} t(\xi_\lambda) = \tilde t(\xi).
	\]
	It follows that $(K, \tilde s, V)$ is a dilation of $(H,\tilde t)$.

	We conclude that dilations of $(H,t)$ are in bijection with dilations of $(H,\tilde t)$.
	In particular, $(H,t)$ is maximal if and only if $(H,\tilde t)$ is maximal.
\end{proof}

\section{Maximality criteria}
In this section we develop the general theory of dilations for correspondences.
We first deal with W*-correspondences and then proceed to the case of C*-correspondences using the second dual technique.

Consider a W*-correspondence $X$ over a W*-algebra $\fM$.
We assume all representations of $\fM$ to be normal.
Let $(H, t)$ be a representation of a correspondence $X$.
Denote $H_0 = H\ominus \overline{t(X)H}$ which is an $\fM$-reducing subspace.
The map $t_*\colon X\otimes_\fM H\to \overline{t(X)H}\oplus H_0 = H$ can be now written as a column block-matrix
\[
	t_* = \begin{bmatrix}
		Qt_* \\
		0
	\end{bmatrix} \colon X\otimes_\fM H \to \overline{t(X)H} \oplus H_0,
\]
where $Q$ is the orthogonal projection $H\twoheadrightarrow \overline{t(X)H}$.
Dilations of the representation $(H, t)$ correspond to dilations of the map $t_*$ and the above matrix form helps to classify those dilations.

\begin{construction}\label{con:dilation}
	Consider an $\fM$-representation $L$ together with a pair of bounded $\fM$-linear operators $s_0\colon X\otimes_\fM L \to H_0$ and $s_1\colon X\otimes_\fM L \to L$ satisfying $s_0^*s_0 + s_1^*s_1 = 1_{X\otimes_\fM L}$.
	For $K = H\oplus L$, we can form an $\fM$-linear map $s_* \colon X\otimes_\fM K = X\otimes_\fM H \oplus X\otimes_\fM L \to K = \overline{t(X)H}\oplus H_0 \oplus L$ by the block matrix
	\[
		s_* = \begin{bmatrix}
			Qt_* & 0   \\
			0    & s_0 \\
			0    & s_1
		\end{bmatrix} \colon (X\otimes_\fM H)\oplus (X\otimes_\fM L) \to \overline{t(X)H} \oplus H_0 \oplus H.
	\]
	Due to the relation on $s_0$ and $s_1$, the map $s_*$ is an isometry and thus defines an isometric $X$-representation $(K, s)$.
	Together with the tautological inclusion $V\colon H \hookrightarrow K$, the triple $(K, s, V)$ is a dilation of $(H, t)$, which is trivial if and only if $s_0 = 0$.
\end{construction}

\begin{lemma}\label{l:dil-constr}
	Every dilation of $(H, t)$ is equivalent to $(K,s,V)$ from Construction \ref{con:dilation} for some $L, s_0, s_1$.
\end{lemma}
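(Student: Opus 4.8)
The plan is to take an arbitrary dilation $(K, s, V)$ and recognize it, after the canonical identification of $H$ with $VH$, as one of the dilations produced by Construction~\ref{con:dilation}. First I would identify $H$ with its image $VH \subset K$ and set $L = K \ominus H$. Since $V$ is $\fM$-linear, $H = VH$ is invariant under $\rho_K$, so both $H$ and $L$ are $\fM$-reducing and $K = \overline{t(X)H} \oplus H_0 \oplus L$ as normal $\fM$-representations. By additivity of the tensor product, $X \otimes_\fM K = (X\otimes_\fM H) \oplus (X\otimes_\fM L)$, and the goal is to show that, with respect to these decompositions, the isometry $s_*$ associated to $s$ via Proposition~\ref{p:tens-equiv-rep} has exactly the block form of Construction~\ref{con:dilation}.

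Next I would treat the two columns of $s_*$ separately. On $X\otimes_\fM H$, under the identification Lemma~\ref{l:isometric-dil} reads $s(\xi)v = t(\xi)v$ for $v\in H$, so the restriction of $s_*$ to $X\otimes_\fM H$ coincides with $t_* = \begin{bmatrix} Qt_* \\ 0 \end{bmatrix}$ and lands in $\overline{t(X)H}$; hence the first column of $s_*$ is $\begin{bmatrix} Qt_* \\ 0 \\ 0 \end{bmatrix}$. The only genuinely new point — and the step I expect to be the crux — is that the $(1,2)$-block vanishes, i.e.\ that $s(\xi)w \perp \overline{t(X)H}$ for every $w\in L$ and $\xi\in X$. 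To see this I would compute, for $v\in H$ and $\eta\in X$,
\[
  \langle s(\xi)w, t(\eta)v\rangle = \langle s(\xi)w, s(\eta)v\rangle = \langle s(\eta)^*s(\xi)w, v\rangle = \langle \rho_K(\langle\eta,\xi\rangle)w, v\rangle,
\]
using $t(\eta)v = s(\eta)Vv$ and the representation relation $s(\eta)^*s(\xi) = \rho_K(\langle\eta,\xi\rangle)$. Since $L$ is $\fM$-reducing, $\rho_K(\langle\eta,\xi\rangle)w \in L$ is orthogonal to $v\in H$, so the inner product is $0$; as vectors $t(\eta)v$ span $\overline{t(X)H}$, the block vanishes.

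Finally, writing $s_0$ and $s_1$ for the $(2,2)$- and $(3,2)$-blocks of $s_*$ (which are $\fM$-linear because the projections onto $H_0$ and $L$ commute with $\rho_K$), we have recovered exactly the matrix of Construction~\ref{con:dilation}. The isometry condition $s_*^*s_* = 1$ then forces $s_0^*s_0 + s_1^*s_1 = 1_{X\otimes_\fM L}$ on the lower-right block of $s_*^*s_*$, while the upper-left block reduces to $t_*^*t_* = 1$ automatically; this is precisely the relation required in Construction~\ref{con:dilation}. Thus $(K,s,V)$ is equivalent to the dilation built from $L, s_0, s_1$, completing the argument. The main obstacle is the orthogonality computation giving the vanishing of the $(1,2)$-block, which is where the dilation hypothesis (through Lemma~\ref{l:isometric-dil}) and the $\fM$-reducibility of $L$ are essential; the remaining bookkeeping is routine.
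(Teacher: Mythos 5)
Your proof is correct and follows essentially the same route as the paper: identify $H$ with $VH$, set $L = K\ominus H$, decompose $K = \overline{t(X)H}\oplus H_0\oplus L$, and show that the two unwanted blocks of $s_*$ vanish so that the isometry condition reduces to $s_0^*s_0+s_1^*s_1=1$. The only difference is cosmetic: you eliminate those blocks via Lemma~\ref{l:isometric-dil} together with a direct orthogonality computation using $s(\eta)^*s(\xi)=\rho_K(\langle\eta,\xi\rangle)$ and the $\fM$-reducibility of $L$, whereas the paper reads both off from the single matrix identity $s_*^*s_*=1$; the underlying facts are the same.
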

\begin{proof}
	Let $(K, s, V)$ be a dilation of $(H, t)$.
	Identify $H$ with $VH$ and set $L = K\ominus H$.
	Since it is a dilation, the map $s_*$ has the block form
	\[
		s_* = \begin{bmatrix}
			Qt_* & s_2 \\
			0    & s_0 \\
			s_3  & s_1
		\end{bmatrix} \colon (X\otimes_\fM H)\oplus (X\otimes_\fM L) \to \overline{t(X)H} \oplus H_0 \oplus L,
	\]
	where $s_i$ are $\fM$-linear operators between corresponding Hilbert spaces.

	To prove the lemma, we must show that $s_2 = 0$, $s_3 = 0$, and $s_0^*s_0 + s_1^*s_1 = 1_{X\otimes_\fM L}$.
	Everything follows from the condition that $s_*$ is an isometry.
	Indeed, we have
	\begin{multline*}
		\begin{bmatrix}
			1_{X\otimes_\fM H} & 0                  \\
			0                  & 1_{X\otimes_\fM L}
		\end{bmatrix} =
		s_*^*s_* = \begin{bmatrix}
			t_*^*Q^*Qt_* + s_3^*s_3 & t_*^*Q^*s_2+s_3^*s_1           \\
			s_2^*Qt_* + s_1^*s_3    & s_0^*s_0 + s_1^*s_1 + s_2^*s_2
		\end{bmatrix}.
	\end{multline*}
	Since $Qt_*$ is a unitary $X\otimes_\fM H\to \overline{t(X)H}$, we have $s_3^*s_3 = 1_{X\otimes_\fM H} - t^*_*Q^*Qt_* = 0$ and thus $s_3=0$.
	Consequently, in the lower left corner we have $0 = s_2^*Qt_*$, which implies $s_2=0$.
	Finally, the condition on (2,2)-entry is $s_0^*s_0+s_1^*s_1 = 1_{X\otimes_\fM L}$ and the dilation $(K, t, V)$ is of the form of Construction~\ref{con:dilation} for the data $L, s_0, s_1$.
\end{proof}

Recall that for a W*-correspondence $X$ over $\fM$ we define the support projection $P_X\in Z(\fM)$ as the unique central projection satisfying $\ker \varphi_X = \fM(1-P_X)$.
\begin{definition}\label{d:p-relative}
	Let $X$ be a W*-correspondence over $\fM$ and let $P\in Z(\fM)$ be a central projection.
	A representation $(H, t)$ is called \emph{$P$-relative Cuntz-Pimsner covariant} if $\rho_H(P)H\subset \overline{t(X)H}$.
	A $P_{X}$-relative Cuntz-Pimsner covariant representation is called \emph{fully Cuntz-Pimsner covariant}.

	If $X$ is a C*-correspondence over $A$, then a representation $(H,t)$ is called $P$-relative Cuntz-Pimsner covariant for $P\in Z(A^{**})$ or fully Cuntz-Pimsner covariant if the associated representation $(H,\tilde t)$ of $X^{**}$ is respectively $P$-relative or fully Cuntz-Pimsner covariant.
\end{definition}
Since  $\varphi(P_X)X = X$ for a W*-correspondence $X$, we have $\rho(P_X)t(X) = t(X)$ for any representation $(H, t)$ and thus $\overline{t(X)H}\subset \rho(P_X)H$.
Therefore, the $X$-representation is fully Cuntz-Pimsner covariant if and only if $\overline{t(X)H} = \rho(P_X)H$.

If $X$ is a C*-correspondence and $(H, t)$ is an $X$-representation, then by w*-continuity $\overline{t(X)H} = \overline{\tilde t(\ssX)H}$.
In particular, the representation is fully Cuntz-Pimsner covariant if and only if $\overline{t(X)H} = \tilde\rho(P_X)H$.

\begin{theorem}\label{t:criterion}
	A representation of a C*- or W*-correspondence is maximal if and only if it is fully Cuntz-Pimsner covariant.
\end{theorem}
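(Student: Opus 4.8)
The plan is to establish both implications for a W*-correspondence $X$ over $\fM$ and then import the C*-case with no extra work: by Definition~\ref{d:p-relative}, full Cuntz–Pimsner covariance of a C*-representation $(H,t)$ is \emph{defined} to be that of the associated normal representation $(H,\tilde t)$ of $\ssX$, and by Proposition~\ref{p:ss-equiv} maximality of $(H,t)$ is equivalent to maximality of $(H,\tilde t)$; so the C*-statement is literally the W*-statement applied to $\ssX$. In the W*-setting, Construction~\ref{con:dilation} and Lemma~\ref{l:dil-constr} have already reduced maximality to bookkeeping: $(H,t)$ is maximal if and only if every admissible triple $(L,s_0,s_1)$—with $s_0\colon X\otimes_\fM L\to H_0$ and $s_1\colon X\otimes_\fM L\to L$ bounded $\fM$-linear, $s_0^*s_0+s_1^*s_1=1$, and $H_0=H\ominus\overline{t(X)H}$—satisfies $s_0=0$. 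Thus I must show that this forcing of $s_0=0$ is equivalent to $\overline{t(X)H}=\rho_H(P_X)H$.

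For the implication that full covariance yields maximality I would argue as follows. Assuming $\overline{t(X)H}=\rho_H(P_X)H$, one has $H_0=\rho_H(1-P_X)H$. Let $(L,s_0,s_1)$ be admissible. Since $\varphi_X(P_X)=\mathrm{id}_X$, the central projection $P_X$ acts as the identity on $X\otimes_\fM L$ through the left action, so for $w\in X\otimes_\fM L$, $\fM$-linearity of $s_0$ gives $s_0(w)=s_0(P_X\cdot w)=\rho_H(P_X)s_0(w)$; but $s_0(w)\in H_0=\rho_H(1-P_X)H$ forces $\rho_H(P_X)s_0(w)=0$, whence $s_0=0$. Every dilation is then trivial and $(H,t)$ is maximal. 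This direction is routine.

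For the converse I would use the contrapositive, setting $N:=\rho_H(P_X)H\ominus\overline{t(X)H}\neq0$ and producing an admissible triple with $s_0\neq0$. The same support computation shows that the range of \emph{any} admissible $s_0$ already lies in $\rho_H(P_X)H_0=N$ (note $\rho_N(P_X)=1_N$), so the whole content is an existence statement, which I would isolate as a lemma: \emph{if $N\neq0$ is a normal $\fM$-representation with $\rho_N(P_X)=1_N$, then there are a normal $\fM$-representation $L$ and a nonzero $\fM$-linear contraction $s_0\colon X\otimes_\fM L\to N$ for which $1-s_0^*s_0=s_1^*s_1$ for some $\fM$-linear $s_1\colon X\otimes_\fM L\to L$.} Granting the lemma, Construction~\ref{con:dilation} yields a non-trivial dilation, so $(H,t)$ is not maximal.

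The hard part is exactly this lemma: manufacturing a nonzero $\fM$-linear map into $N$ from some $X\otimes_\fM L$. The subtlety is that $N$ is supported over the \emph{left} support $P_X$ of $\varphi_X$, while $X\otimes_\fM(-)$ is governed by the \emph{right} support, so one cannot take $L=N$ (indeed $X\otimes_\fM N$ can vanish). My plan is to transport $N$ across the bimodule $X$ using the conjugate W*-correspondence $\overline{X}$ and the tensor–hom adjunction: take $L=\overline{X}\otimes_\fM N$ and let $s_0\colon X\otimes_\fM(\overline{X}\otimes_\fM N)\to N$ be the adjoint of the adjunction unit $N\to X\otimes_\fM\overline{X}\otimes_\fM N$. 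Because $N$ lives precisely where $\varphi_X$ is faithful and unital, this unit is isometric on $N$, making $s_0$ a nonzero coisometry onto $N$; the leftover $1-s_0^*s_0$ is then a projection that can be absorbed as $s_1^*s_1$ after enlarging $L$. The main technical burden will be verifying normality, $\fM$-linearity, the norm bound, and the nonvanishing of $s_0$ in full generality. A more structural alternative, which I would keep in reserve, is to build a fully Cuntz–Pimsner covariant dilation $(K,s,V)$ of $(H,t)$ directly, as the correspondence analogue of Sz.-Nagy's minimal unitary dilation of an isometry $t_*\colon X\otimes_\fM H\to H$ (adjoining a Fock-type tail saturating $N$ level by level): the first implication makes $(K,s)$ maximal, so if $(H,t)$ were maximal this dilation would be trivial, i.e. $VH$ would be $s$-reducing, and since a reducing subspace of a fully covariant representation is easily seen to be fully covariant, one again concludes $N=0$. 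In either approach, the existence step is the crux.
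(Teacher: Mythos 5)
Your reduction to the W*-case via Proposition~\ref{p:ss-equiv} and Definition~\ref{d:p-relative}, your use of Lemma~\ref{l:dil-constr} to recast maximality as ``every admissible $(L,s_0,s_1)$ has $s_0=0$'', and your proof that full Cuntz--Pimsner covariance forces $s_0=0$ (because $P_X$ acts as the identity on $X\otimes_\fM L$ and $s_0$ is $\fM$-linear) all coincide with the paper's argument and are correct. You have also correctly isolated the crux: when $N=\rho_H(P_X)H_0\neq 0$, one must \emph{exhibit} a nonzero $\fM$-linear $s_0\colon X\otimes_\fM L\to N$ together with an $s_1\colon X\otimes_\fM L\to L$ satisfying $s_0^*s_0+s_1^*s_1=1$. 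That step is where your proposal has a genuine gap, on two counts.

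First, the space $L=\overline{X}\otimes_\fM N$ is not well defined for a general W*-correspondence: the conjugate module $\overline{X}$ carries a canonical right action of $\cL(X)$ (with $\cL(X)$-valued, or rather $\cK(X)$-valued, inner products coming from rank-one operators), not an $\fM$-valued inner product, and $N$ is only an $\fM$-module, not an $\cL(X)$-module. Indeed, the entire difficulty is precisely to transport the $\fM P_X$-representation $N$ across $\varphi_X\colon \fM P_X\hookrightarrow\cL(X)$ to something of the form $X\otimes_\fM(-)$; asserting an ``adjunction unit $N\to X\otimes_\fM\overline{X}\otimes_\fM N$'' presupposes what must be proved. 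The paper resolves this with von Neumann algebra representation theory rather than an explicit adjoint: take a faithful normal representation $M$ of $\fM$; by Morita--Rieffel theory $X\otimes_\fM M$ is a faithful normal representation of $\cL(X)$, hence of $\fM P_X\cong\varphi(\fM)$, so the nonzero $\fM P_X$-representation $N$ is not disjoint from it, and a nonzero $\fM$-linear contraction $s_0'\colon X\otimes_\fM M\to H_0$ exists. Second, even granted a nonzero $s_0$, the constraint $s_0^*s_0+s_1^*s_1=1_{X\otimes_\fM L}$ with $s_1$ mapping \emph{into} $L$ is not satisfiable for an arbitrary $L$ by ``enlarging $L$'' --- enlarging $L$ also enlarges the domain $X\otimes_\fM L$. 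One needs an isometry $X\otimes_\fM L\to L$ to absorb $\sqrt{1-s_0^*s_0}$; the paper obtains this by taking $L=M^X$ to be the Fock space, extending $s_0'$ by zero, and setting $s_1=(s_M)_*\sqrt{1-s_0^*s_0}$. Your reserve plan (a fully covariant Sz.-Nagy-type dilation plus the observation that reducing subspaces inherit full covariance) is sound in outline, but its existence step is the same unproved crux. To complete the proof you should replace the $\overline{X}$-construction with the disjointness/Morita argument and the Fock-space choice of $L$.
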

\begin{proof}
	By Proposition \ref{p:ss-equiv}, it is enough to prove the theorem for a W*-correspondence $X$ over $\fM$.
	Let $H_0 = H\ominus \overline{t(X)H}$, which is an $\fM$-reducing subspace.
	Then, the representation is fully Cuntz-Pimsner covariant if and only if $\rho(P_X) H_0 = 0$.

	Suppose that $(H, t)$ is not maximal.
	Then, by Lemma~\ref{l:dil-constr}, there exists an $\fM$-representation $L$ together with $\fM$-linear maps $s_0$ and $s_1$ such that $s_0\colon X\otimes_{\fM} L \to H_0$ is nonzero.
	The projection $P_X$ acts by identity on $X$ and thus on $X\otimes_{\fM} L$.
	Therefore, since $s_0$ is $\fM$-linear, the projection $P_X$ also acts by identity on $\overline{\operatorname{Im} s_0} \subset H_0$.
	Consequently, $\rho(P_X)H_0 \neq 0$ and the representation is not fully Cuntz-Pimsner covariant.

	Suppose now that $\rho(P_X)H_0 \neq 0$.
	Let $\rho_M\colon \fM \hookrightarrow \cB(M)$ be a faithful normal representation on a Hilbert space $M$.
	Then, by the Morita-Rieffel theory for von Neumann algebras (see \cite[8.5.12]{BLM}), $X\otimes_{\fM} M$ is a faithful representation of $\cL(X)$ and thus of $\fM P_X \cong \varphi(\fM)$.
	Since $\rho(P_X)H_0$ is a non-trivial representation of $\fM P_X$, from the representation theory of von Neumann algebras it follows that $\rho(P_X)H_0$ is quasi-equivalent to a subrepresentation of $X\otimes_{\fM} M$.
	Therefore, there exists a non-zero $\fM$-linear contraction $s_0'\colon X\otimes_\fM M \to H_0$.

	Let $L = M^X$ be the $X$-Fock space of $M$ and ${(s_M)}_*\colon X\otimes_{\fM} L\to L$ be the corresponding $\fM$-linear isometry (see Construction~\ref{con:Fock}).
	Define $s_0\colon X\otimes_{\fM} L \to H_0$ to be equal to $s_0'$ on $X\otimes_\fM M\subset X\otimes_\fM L$ and zero on the orthogonal complement.
	Also, set $s_1 = {(s_M)}_*\sqrt{1 - s_0^*s_0} \colon X\otimes_{\fM} L \to L$.
	We have $s_0^*s_0 + s_1^*s_1 = 1_{X\otimes_{\fM} L}$ and Construction~\ref{con:dilation} gives a dilation $(K, s, V)$ with $K = H\oplus L$ and
	\[
		s_* = \begin{bmatrix}
			Qt_* & 0   \\
			0    & s_0 \\
			0    & s_1
		\end{bmatrix} \colon (X\otimes_\fM H)\oplus (X\otimes_\fM L) \to \overline{t(X)H} \oplus H_0 \oplus L.
	\]
	The dilation is non-trivial since $s_0$ is non-zero by construction.
	We conclude that full Cuntz-Pimsner covariance is equivalent to maximality.
\end{proof}
\begin{example}\label{ex:dor-on-salomon}
	We analyze topological graphs and quivers in Section~\ref{s:quivers}.
	However, we already have enough tools to describe maximality for discrete graphs.

	Let $(E^0, E^1, r, s)$ be a countable discrete graph and $X = X(E)$ be its graph correspondence over $c_0(E^0)$ (see \cite[Chapter 8]{raeburn-graph}).
	Representations $(H,t)$ of $X$ are in bijection with Cuntz-Krieger families $\{p_v, s_e\}_{v\in E^0, e\in E^1}$.
	Moreover, we have $\overline{t(X)H} = \bigoplus_{e\in E^1} s_es_e^*H$.

	We have $\Ss{c_0(E^0)} = \ell^\infty(E^0)$ and $\ker \tilde\varphi_X = \ell^{\infty}(E^0\setminus r(E^0))$.
	Therefore, the support projection $P_X$ equals to the sum of projections corresponding to non-sink vertices $r(E^0)$.
	Consequently, the representation is fully Cuntz-Pimsner covariant if and only if $\sum_{e\in E^1} s_es_e^* = \sum_{v\in r(E^0)} p_v$, where the sum is taken in the w*-topology.
	This coincides with the notion of full Cuntz-Krieger families by Dor-On and Salomon~\cite{doron-salomon}.
	Therefore, Theorem~\ref{t:criterion} is a generalization of \cite[Theorem 3.5]{doron-salomon}.
\end{example}

\begin{corollary}\label{cor:ind-max}
	Let $M$ be a representation of $A$.
	The induced representation $(M^X, s_M)$ is maximal if and only if $\tilde{\rho}_M(P_X) = 0$.
\end{corollary}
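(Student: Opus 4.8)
The plan is to invoke Theorem~\ref{t:criterion} and reduce maximality of $(M^X, s_M)$ to a statement about the zeroth summand of the Fock space. First I would recall from the remark following Definition~\ref{d:p-relative} that $(M^X, s_M)$ is maximal if and only if it is fully Cuntz-Pimsner covariant, i.e. $\overline{s_M(X)M^X} = \tilde\rho_{M^X}(P_X)M^X$, where $\rho_{M^X}$ denotes the left $A$-action on $M^X$. Writing $H_0 = M^X \ominus \overline{s_M(X)M^X}$ and using that the inclusion $\overline{s_M(X)M^X}\subset \tilde\rho_{M^X}(P_X)M^X$ always holds (since $\tilde\varphi(P_X)\ssX = \ssX$), together with the fact that $H_0$ is $A$-reducing so that $\tilde\rho_{M^X}(P_X)$ respects the decomposition, this equality is equivalent to $\tilde\rho_{M^X}(P_X)H_0 = 0$.

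The next step is to identify $H_0$ explicitly. By Construction~\ref{con:Fock}, the isometry $(s_M)_*$ is the tautological inclusion of $X\otimes_A M^X = \bigoplus_{k\geq 1} X^{\otimes k}\otimes_A M$ into $M^X = \bigoplus_{k\geq 0} X^{\otimes k}\otimes_A M$. Its image is closed and equals $\bigoplus_{k\geq 1} X^{\otimes k}\otimes_A M$, so $H_0$ is exactly the zeroth summand $X^{\otimes 0}\otimes_A M = A\otimes_A M$, which I would identify isometrically with $M$ via $a\otimes v\mapsto \rho_M(a)v$, using non-degeneracy of $\rho_M$.

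It then remains to compute how $\tilde\rho_{M^X}(P_X)$ acts on this summand. The left $A$-action on $M^X$ is diagonal with respect to the summands $X^{\otimes k}\otimes_A M$, and under the identification $A\otimes_A M\cong M$ its restriction to the zeroth summand is precisely $\rho_M$. Since $H_0$ is reducing for $\rho_{M^X}(A)$, it is also reducing for the generated von Neumann algebra $\tilde\rho_{M^X}(\ssA)$, and the normal representation $a^{**}\mapsto \tilde\rho_{M^X}(a^{**})|_{H_0}$ extends $\rho_M$; by uniqueness of normal extensions it equals $\tilde\rho_M$. Hence $\tilde\rho_{M^X}(P_X)|_{H_0} = \tilde\rho_M(P_X)$, and combining with the first step shows that maximality is equivalent to $\tilde\rho_M(P_X) = 0$, as claimed.

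I expect the point requiring the most care to be the compatibility of restriction to the zeroth summand with passage to the second dual: one must check that the orthogonal decomposition $M^X = \bigoplus_{k\geq 0} X^{\otimes k}\otimes_A M$ consists of $A$-reducing subspaces, so that $\tilde\rho_{M^X}(P_X)$ acts blockwise, and that its zeroth block is genuinely $\tilde\rho_M(P_X)$ rather than merely agreeing with $\rho_M$ on $A$. The remaining ingredients—identifying $\overline{s_M(X)M^X}$ and $H_0$ from the Fock construction—are essentially bookkeeping.
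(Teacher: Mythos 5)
Your argument is correct and follows the same route as the paper: identify $M^X \ominus \overline{s_M(X)M^X}$ with the zeroth Fock summand $M$ and apply Theorem~\ref{t:criterion}. The paper states this in two lines, leaving implicit exactly the points you elaborate (closedness of the image of the isometry $(s_M)_*$ and the identification $\tilde\rho_{M^X}(P_X)|_{M} = \tilde\rho_M(P_X)$ via uniqueness of normal extensions), so your write-up is simply a more detailed version of the same proof.
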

\begin{proof}
	By construction, $M = M^X \ominus \overline{s_M(X)M^X}$ as an $A$-representation.
	Therefore, $\tilde{\rho}_{M^X}(P_X)M^X\subset \overline{s_M(X)M^X}$ if and only if $\tilde{\rho}_M(P_X) = 0$.
\end{proof}

\begin{lemma}\label{l:cp-ideal-covariant}
	A representation $(H, t)$ of a C*-correspondence $X$ is $P_I$-relative Cuntz-Pimsner covariant if and only if it is $I$-relative covariant in the sense of Katsura.
	In particular, the representation is Cuntz-Pimsner covariant if and only if it is $P_{\cI_X}$-relative Cuntz-Pimsner covariant, where $\cI_X$ is Katsura's ideal.
\end{lemma}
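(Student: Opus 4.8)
The plan is to unwind both covariance conditions into concrete spatial statements about the subspace $\overline{t(X)H}\subseteq H$ and then match them. Throughout, write $Q = t_*t_*^*$ for the orthogonal projection of $H$ onto $\overline{t(X)H}$. I would first translate the left-hand condition. By Definition~\ref{d:p-relative}, $(H,t)$ is $P_I$-relative Cuntz-Pimsner covariant if and only if the associated $\ssX$-representation $(H,\tilde t)$ satisfies $\tilde\rho_H(P_I)H\subseteq\overline{\tilde t(\ssX)H}$. By Lemma~\ref{l:ssPI} we have $\tilde\rho_H(P_I)H = \overline{\rho_H(I)H}$, and by w*-continuity (as recorded just before Theorem~\ref{t:criterion}) $\overline{\tilde t(\ssX)H} = \overline{t(X)H}$. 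Hence $P_I$-relative covariance is equivalent to the single inclusion $\overline{\rho_H(I)H}\subseteq\overline{t(X)H} = QH$.

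Next I would treat the Katsura condition. A representation is $I$-relative in Katsura's sense exactly when $\rho_H\rtimes t$ annihilates the generators $\varphi_X(a)-\boldsymbol\rho(a)$ of $\mathcal J_I$, i.e.\ $(\rho_H\rtimes t)(\varphi_X(a)) = \rho_H(a)$ for all $a\in I$, where $\varphi_X(a)$ is read through the canonical extension of $\rho_H\rtimes t$ to multipliers (this makes the condition meaningful even when $\varphi_X(a)\notin\cK(X)$). The computation I would carry out is the operator identity, valid for every $a\in A$,
\[
  t_*^*\,\rho_H(a)\,t_* = \varphi_X(a)\otimes 1_H \qquad\text{on } X\otimes_A H,
\]
which follows directly from the representation relations $\rho_H(a)t(\xi) = t(\varphi_X(a)\xi)$ and $t(\eta)^*t(\xi) = \rho_H(\langle\eta,\xi\rangle)$ by pairing with elementary tensors $\xi\otimes v$, $\eta\otimes w$. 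Since $(\rho_H\rtimes t)(\varphi_X(a)) = t_*(\varphi_X(a)\otimes 1_H)t_*^*$ (the standard formula $\psi_t(k) = t_*(k\otimes 1_H)t_*^*$ on $\cK(X)$, extended to the multiplier $\varphi_X(a)$), conjugating the identity above by $t_*$ and using $t_*t_*^* = Q$ gives $(\rho_H\rtimes t)(\varphi_X(a)) = Q\rho_H(a)Q$. Thus Katsura's condition reads $\rho_H(a) = Q\rho_H(a)Q$ for all $a\in I$.

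Finally I would establish the purely operator-theoretic equivalence: for the self-adjoint ideal $I$, the relation $\rho_H(a) = Q\rho_H(a)Q$ for all $a\in I$ holds if and only if $\overline{\rho_H(I)H}\subseteq QH$. The forward direction is immediate from $\operatorname{ran}(Q\rho_H(a)Q)\subseteq QH$. For the converse, the inclusion gives $Q\rho_H(a) = \rho_H(a)$; applying this to $a^*\in I$ and taking adjoints yields $\rho_H(a)Q = \rho_H(a)$, whence $Q\rho_H(a)Q = \rho_H(a)$. Combining the three translations proves the main equivalence, and the ``in particular'' statement follows by taking $I = \cI_X$, since by definition a representation is Cuntz-Pimsner covariant precisely when it factors through $\cO_X = \cO_{X,\cI_X}$, i.e.\ when it is $\cI_X$-relative in Katsura's sense.

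I expect the main obstacle to be the passage through the second dual combined with the self-adjointness step in the last paragraph: the Katsura condition is a genuinely two-sided corner identity $\rho_H(a) = Q\rho_H(a)Q$, whereas $P_I$-relative covariance is a one-sided range inclusion, and it is the self-adjointness of $I$ (letting one replace $a$ by $a^*$ and take adjoints) that bridges the two. The supporting identity $t_*^*\rho_H(a)t_* = \varphi_X(a)\otimes 1_H$ is the computational heart, but it is routine; the conceptual care is in matching the second-dual reformulation of $P_I$-covariance with the multiplier reformulation of Katsura covariance.
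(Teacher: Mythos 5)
Your proof is correct, and its first step coincides with the paper's: Lemma~\ref{l:ssPI} (together with the w*-density remark preceding Theorem~\ref{t:criterion}) turns $P_I$-relative Cuntz-Pimsner covariance into the spatial inclusion $\rho_H(I)H\subseteq\overline{t(X)H}$. Where you diverge is in the second half: the paper outsources the equivalence of this inclusion with Katsura's condition to \cite[Proposition 2.16]{MeyerSehnemBicat} (applied with $B=\cB(H)$), whereas you prove it directly. Your ingredients --- the identity $t_*^*\rho_H(a)t_*=\varphi_X(a)\otimes 1_H$, the formula $\psi_t(k)=t_*(k\otimes 1_H)t_*^*$ giving $(\rho_H\rtimes t)(\varphi_X(a))=Q\rho_H(a)Q$, and the self-adjointness of $I$ to upgrade the one-sided relation $Q\rho_H(a)=\rho_H(a)$ to the corner identity $Q\rho_H(a)Q=\rho_H(a)$ --- are all correct, and they make the lemma self-contained, which is a genuine gain. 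The one point to tighten is the meaning of Katsura covariance for a general ideal: the paper's $\mathcal{J}_I$ is generated by elements $\varphi_X(a)-\boldsymbol{\rho}(a)$ of $\cT_X$, which literally presupposes $I\subseteq\varphi_X^{-1}(\cK(X))$; your multiplier-extension reading is a sensible convention beyond that case, but you should state explicitly that for $I\subseteq\varphi_X^{-1}(\cK(X))$ it recovers the quotient definition (a $*$-representation annihilates $\mathcal{J}_I$ iff it annihilates each generator, and on $\cK(X)$ the map $\rho_H\rtimes t$ is exactly $\psi_t$). The ``in particular'' clause only involves $I=\cI_X\subseteq\varphi_X^{-1}(\cK(X))$, so it is unaffected.
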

\begin{proof}
	If $H$ is an $A$-representation, then we have $\rho(\cI_X)H = \tilde\rho(P_{\cI_X})H$ by Lemma~\ref{l:ssPI}.
	The statement follows from \cite[Proposition 2.16]{MeyerSehnemBicat} for $B = \cB(H)$.
\end{proof}

Let $\mathcal{A}$ be a non-self-adjoint subalgebra of a C*-algebra $B$.
In \cite{arveson69}, Arveson defined the Shilov ideal $J$ of $\mathcal{A}$ to be the largest ideal of $B$ such that the quotient $B\twoheadrightarrow B/J$ is completely isometric on $\mathcal{A}$.
The quotient $C^*_{env}(\mathcal{A})\coloneqq B/J$ is called the C*-envelope of $\mathcal{A}$.
The existence of the Shilov ideal and of the C*-envelope was established by Hamana~\cite{hamana}.
\begin{corollary}[{\cite[Theorem 3.7]{katsoulis-kribs}}]\label{cor:katsoulis-kribs}
	Maximal representations of a C*-correspondence $X$ are Cuntz-Pimsner covariant in the sense of Katsura.
	In particular, $C^*_{env}(\cT_X^+) = \cO_X$.
\end{corollary}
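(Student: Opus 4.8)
The plan is to obtain the first assertion from the maximality criterion (Theorem~\ref{t:criterion}) together with the identification of Cuntz--Pimsner covariance with $P_{\cI_X}$-relative covariance (Lemma~\ref{l:cp-ideal-covariant}), and then to read off the envelope from it. By Theorem~\ref{t:criterion} a maximal representation $(H,t)$ is fully Cuntz--Pimsner covariant, i.e.\ $P_X$-relative: $\tilde\rho(P_X)H\subseteq\overline{t(X)H}$. By Lemma~\ref{l:cp-ideal-covariant}, Katsura covariance is precisely $P_{\cI_X}$-relative covariance, $\tilde\rho(P_{\cI_X})H\subseteq\overline{t(X)H}$. Since $P$-relative covariance is monotone in $P$ (if $P\le P'$ then $\tilde\rho(P)H=\tilde\rho(P)\tilde\rho(P')H\subseteq\tilde\rho(P')H$), the entire first statement reduces to the single inequality of central projections
\[
  P_{\cI_X}\le P_X \quad\text{in } Z(\ssA).
\]

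To prove this I would use the two defining features of $\cI_X=\varphi_X^{-1}(\cK(X))\cap(\ker\varphi_X)^{\perp}$ separately. Put $A_c=\varphi_X^{-1}(\cK(X))$ and $J=\ker\varphi_X$, both ideals of $A$. From $\cI_X\subseteq A_c$ I get $P_{\cI_X}\le P_{A_c}$; from $\cI_X\subseteq J^{\perp}$ and $J^{\perp}J=0$ (whence $P_{J^{\perp}}P_J=0$ by separate w*-continuity of multiplication) I get $P_{\cI_X}\le P_{J^{\perp}}\le 1-P_J$. The crux is the identity
\[
  P_{A_c}(1-P_X)=P_J.
\]
Here I would restrict $\tilde\varphi_X$ to the corner $A_c^{**}=\ssA P_{A_c}$. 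Because $\varphi_X|_{A_c}$ takes values in the genuine C*-algebra $\cK(X)$, this restriction is the honest bidual $(\varphi_X|_{A_c})^{**}\colon A_c^{**}\to\cK(X)^{**}=\cL(\ssX)$, so by exactness of the second-dual functor its kernel is $(\ker\varphi_X|_{A_c})^{**}=J^{**}=\ssA P_J$. On the other hand this kernel is $\ker\tilde\varphi_X\cap\ssA P_{A_c}=\ssA[(1-P_X)P_{A_c}]$, and comparing the two central generators gives the identity. Combining the three facts, $P_{\cI_X}(1-P_X)=P_{\cI_X}P_{A_c}(1-P_X)=P_{\cI_X}P_J=0$, hence $P_{\cI_X}\le P_X$.

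I expect this corner-bidual step to be the main obstacle, and it is genuinely necessary: in general $\ker\tilde\varphi_X$ is strictly larger than $(\ker\varphi_X)^{**}$, since the normal extension of a faithful $*$-homomorphism into a von Neumann algebra need not be faithful. Only after cutting down by $P_{A_c}$, where $\varphi_X$ lands in $\cK(X)$ itself rather than merely in $\cL(X)$, does exactness pin the kernel down; thus the compactness half of the definition of $\cI_X$ is exactly what powers the argument.

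For the envelope I would argue as follows. By Proposition~\ref{p:uep-equiv}, maximal $X$-representations are exactly the representations of $\cT_X$ with the unique extension property for $\mathcal{S}_X$; by the first part each such representation annihilates $\mathcal{J}_{\cI_X}$ and hence factors through $\cO_X$. Since the Shilov ideal of $\cT_X^+$ is the intersection of the kernels of all maximal representations (Dritschel--McCullough, via the existence of a maximal representation faithful on the envelope), it contains $\mathcal{J}_{\cI_X}$, so $C^*_{env}(\cT_X^+)$ is a quotient of $\cO_X$. For the reverse inclusion, the Shilov ideal is gauge-invariant---the gauge action preserves $\cT_X^+$ and hence the largest ideal completely isometric on it---and it intersects $\boldsymbol\rho(A)$ trivially because $A\subseteq\cT_X^+$ embeds completely isometrically in the envelope. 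By Katsura's theorem that $\mathcal{J}_{\cI_X}$ is the largest gauge-invariant ideal of $\cT_X$ trivially intersecting $\boldsymbol\rho(A)$, the Shilov ideal is contained in $\mathcal{J}_{\cI_X}$, and therefore equals it, giving $C^*_{env}(\cT_X^+)=\cO_X$.
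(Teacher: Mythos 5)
Your proof is correct and follows essentially the same route as the paper: reduce the first claim to the projection inequality $P_{\cI_X}\le P_X$ via Theorem~\ref{t:criterion} and Lemma~\ref{l:cp-ideal-covariant}, then identify the Shilov ideal with $\mathcal{J}_{\cI_X}$ by gauge-invariance together with Katsura's characterization of $\mathcal{J}_{\cI_X}$. The only difference is that the paper obtains $P_{\cI_X}\le P_X$ in one step --- $\varphi_X|_{\cI_X}\colon\cI_X\to\cK(X)$ is injective, hence so is its bidual $\ssA P_{\cI_X}\to\cL(\ssX)$ --- whereas you route the same mechanism (exactness of the second dual on the ideal where $\varphi_X$ lands in $\cK(X)$) through $\varphi_X^{-1}(\cK(X))$ and $\ker\varphi_X$ separately.
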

\begin{proof}
	The Cuntz-Pimsner covariance is the same as the $P_{\cI_X}$-relative Cuntz-Pimsner covariance by Lemma~\ref{l:cp-ideal-covariant}.
	The restriction $\varphi_{X}|_{\cI_X}\colon \cI_X \to \cK(X)$ is injective and thus $\tilde\varphi_{X}|_{\Ss{\cI_X}}\colon \Ss{\cI_X} = \ssA P_{\cI_X} \to \cL(\ssX)$ is also injective.
	Therefore, we have $P_{\cI_X} \leq P_X$ and, hence, fully Cuntz-Pimsner covariant representations are Cuntz-Pimsner covariant.
	By the proof of \cite[Theorem 4.1]{dritschel-mc}, there is a completely isometric maximal representation $\pi$ of $\cT_X^+$ such that the C*-algebra generated by its image is the C*-envelope $C^*_{env}(\cT_X^+)$ (see also \cite[Section 3]{arveson-note}).
	Since maximal representations are Cuntz-Pimsner covariant, the representation $\pi$ factors through $\cO_X$ and thus $C^*_{env}(\cT_X^+)$ is a quotient of $\cO_X$.

	Moreover, if $J\subset \cO_X$ is the Shilov ideal and $z\in \mathbb{T}$, then the quotient $\cO_X/\gamma_z(J)$ is completely isometric on $\cT_X^+$ and thus $\gamma_z(J)\subset J$.
	Therefore, the Shilov ideal is gauge-invariant.
	However, any non-zero gauge-invariant ideal of $\cO_X$ intersects non-trivially with $A\subset \cT_X^+$ by \cite[Proposition 7.14]{katsura04}, and thus the quotient is not completely isometric on $\cT_X^+$.
	Therefore, $\cO_X$ is the C*-envelope of $\cT_X^+$.
\end{proof}

\begin{corollary}[{\cite[Theorem 1.2]{kim}}]\label{cor:kim}
	A C*-correspondence $X$ is hyperrigid if and only if $\varphi(\cI_X)X=X$.
\end{corollary}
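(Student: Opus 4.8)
The plan is to recast both conditions as a comparison of the central projections $P_X$ and $P_{\cI_X}$ in $Z(\ssA)$, using throughout that $P_{\cI_X}\le P_X$, which was already obtained in the proof of Corollary~\ref{cor:katsoulis-kribs}. First I would record the purely algebraic reformulation that $\varphi(\cI_X)X = X$ is equivalent to $P_{\cI_X} = P_X$. Indeed, by Lemma~\ref{l:ssPI} we have $\Ss{(\varphi(\cI_X)X)} = \tilde\varphi(P_{\cI_X})\ssX$, so the non-degeneracy condition $\varphi(\cI_X)X = X$ is equivalent to $\tilde\varphi(P_{\cI_X})\ssX = \ssX$. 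Since $\tilde\varphi(P_{\cI_X})$ is a projection in $\cL(\ssX)$, having full range forces $\tilde\varphi(P_{\cI_X}) = 1 = \tilde\varphi(P_X)$; because $\tilde\varphi$ is injective on $\ssA P_X$ and $P_{\cI_X}\le P_X$, this yields $P_{\cI_X} = P_X$, while the converse is immediate.

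For the direction $\varphi(\cI_X)X = X\Rightarrow X$ hyperrigid, I would argue directly from Theorem~\ref{t:criterion}. Let $(H,t)$ be any Cuntz-Pimsner covariant representation; by Lemma~\ref{l:cp-ideal-covariant} this means $\tilde\rho_H(P_{\cI_X})H\subseteq \overline{t(X)H}$. Using $P_{\cI_X} = P_X$ we obtain $\tilde\rho_H(P_X)H\subseteq \overline{t(X)H}$, and combined with the always-valid reverse inclusion $\overline{t(X)H}\subseteq \tilde\rho_H(P_X)H$ the representation is fully Cuntz-Pimsner covariant, hence maximal. Thus every Cuntz-Pimsner covariant representation is maximal, and no appeal to induced representations is needed here.

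For the converse I would prove the contrapositive using induced representations. Suppose $P_{\cI_X}\ne P_X$, so that $P := P_X - P_{\cI_X}$ is a nonzero central projection orthogonal to $P_{\cI_X}$. Choosing a faithful normal representation of the corner $\ssA P$ and restricting it to $A$ produces a (non-degenerate, since an approximate unit converges strongly to the identity) representation $M$ with $\tilde\rho_M(P_{\cI_X}) = 0$ and $\tilde\rho_M(P_X) = \tilde\rho_M(P)\ne 0$. The proof of Corollary~\ref{cor:ind-max} works verbatim for an arbitrary central projection $P'$, showing that $(M^X, s_M)$ is $P'$-relative Cuntz-Pimsner covariant if and only if $\tilde\rho_M(P') = 0$. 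Applying this with $P' = P_{\cI_X}$ and invoking Lemma~\ref{l:cp-ideal-covariant}, the induced representation $(M^X, s_M)$ is Cuntz-Pimsner covariant; applying Corollary~\ref{cor:ind-max} itself, it fails to be maximal because $\tilde\rho_M(P_X)\ne 0$. This exhibits a Cuntz-Pimsner covariant representation that is not maximal, so $X$ is not hyperrigid.

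The only delicate point is the reverse direction: one must manufacture a Cuntz-Pimsner covariant representation that is not maximal, and the induced representation on the Fock space is the natural candidate precisely because its new summand $M = M^X\ominus\overline{s_M(X)M^X}$ decouples the two covariance conditions into the separate projection constraints $\tilde\rho_M(P_{\cI_X}) = 0$ and $\tilde\rho_M(P_X) = 0$. Once the general form of Corollary~\ref{cor:ind-max} is in hand --- which rests only on the fact that the Fock decomposition reduces the normally extended left $A$-action --- the whole statement collapses to the projection arithmetic $P_{\cI_X}\le P_X$ together with the reformulation of the first paragraph.
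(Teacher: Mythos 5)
Your proposal is correct and follows essentially the same route as the paper: reduce both conditions to the projection identity $P_{\cI_X}=P_X$ via Lemma~\ref{l:ssPI}, deduce hyperrigidity from Theorem~\ref{t:criterion} together with Lemma~\ref{l:cp-ideal-covariant}, and for the converse produce an induced representation $(M^X,s_M)$ with $\tilde\rho_M(P_{\cI_X})=0$ but $\tilde\rho_M(P_X)\neq 0$ and apply Corollary~\ref{cor:ind-max}. The only difference is that you spell out details the paper leaves implicit, namely the explicit choice of $M$ as a faithful normal representation of the corner $\ssA(P_X-P_{\cI_X})$ and the observation that Corollary~\ref{cor:ind-max} holds for an arbitrary central projection.
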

\begin{proof}
	We have seen in the proof of Corollary~\ref{cor:katsoulis-kribs} that $P_{\cI_X} \leq P_X$.
	By Lemma~\ref{l:ssPI}, we have $\tilde\varphi(P_{\cI_X})\ssX = \Ss{(\varphi(\cI_X)X)} \subset \ssX$.
	Therefore, $P_{\cI_X} = P_X$ if and only if $\varphi(\cI_X)X = X$.
	In particular, if the latter holds, then all Cuntz-Pimsner covariant representations are automatically fully Cuntz-Pimsner covariant and $X$ is hyperrigid.

	Conversely, if $P_{\cI_X} < P_X$, then there is an $A$-representation $M$ such that $\tilde\rho_M(P_{\cI_X}) = 0$ but $\tilde\rho_M(P_X) = 1$.
	The first condition ensures that the induced representation $(M^X, s_M)$ is Cuntz-Pimsner covariant and the second condition implies that it is not maximal by Corollary~\ref{cor:ind-max}.
	Therefore, $X$ is not hyperrigid if $\varphi(\cI_X)X\neq X$.
\end{proof}

We say that a C*-correspondence $X$ is \emph{almost proper}, if the ideal $\varphi^{-1}(\cK(X))\subset A$ acts non-degenerately on $X$.
For an almost proper correspondence, we give an equivalent characterization of the full Cuntz-Pimsner covariance and thus of maximality without any reference to the second dual.
\begin{proposition}\label{p:almost-proper}
	A representation $(H,t)$ of an almost proper C*-cor\-respondence $X$ is fully Cuntz-Pimsner covariant if and only if the identity $\rho\left(\varphi^{-1}(\cK(X))\right)H = \overline{t(X)H}\oplus \rho(\ker \varphi)H$ holds.
\end{proposition}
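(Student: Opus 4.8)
The plan is to move the entire statement into the universal von Neumann algebra $\ssA$, where both sides become assertions about complementary central projections, and then to exploit that almost properness forces the ideal $\varphi^{-1}(\cK(X))$ to act non-degenerately on \emph{every} representation Hilbert space.

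Write $J = \varphi^{-1}(\cK(X))$ and $N = \overline{t(X)H}$. By Theorem~\ref{t:criterion} together with the remarks following Definition~\ref{d:p-relative}, the representation $(H,t)$ is fully Cuntz-Pimsner covariant if and only if $N = \tilde\rho(P_X)H$, and in all cases $N \subseteq \tilde\rho(P_X)H$. Since taking second duals is exact, $\Ss{(\ker\varphi)} = \ker\tilde\varphi = \ssA(1-P_X)$, so the ideal $\ker\varphi$ has support projection $1-P_X$; hence Lemma~\ref{l:ssPI} gives $\rho(\ker\varphi)H = \tilde\rho(1-P_X)H$, and likewise $\rho(J)H = \tilde\rho(P_J)H$ for the support projection $P_J\in Z(\ssA)$ of $J$. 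As $P_X$ and $1-P_X$ are orthogonal, $N\perp \rho(\ker\varphi)H$, so the sum on the right-hand side of the asserted identity is genuinely an orthogonal direct sum inside $H$.

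The key step, and the main obstacle, is to show that almost properness forces $P_J = 1$, equivalently $\rho(J)H = H$ for every representation. I would argue as follows. From $\ker\varphi\subseteq J$ one gets $1-P_X \le P_J$. For the complementary inequality, apply Lemma~\ref{l:ssPI} to $J$: almost properness says $\overline{\varphi(J)X} = X$, so $\tilde\varphi(P_J)\ssX = \Ss{(\varphi(J)X)} = \ssX$ and thus $\tilde\varphi(P_J) = 1_{\cL(\ssX)}$. Because $\tilde\varphi(P_X)\ssX = \ssX$ as noted after Definition~\ref{d:p-relative}, also $\tilde\varphi(P_X) = 1_{\cL(\ssX)} = \tilde\varphi(1)$, whence $\tilde\varphi(P_J) = \tilde\varphi(1)$ and $1-P_J\in \ker\tilde\varphi = \ssA(1-P_X)$, giving $P_X\le P_J$. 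Combining, $P_J \ge P_X \vee (1-P_X) = 1$, so $P_J = 1$ and $\rho(J)H = \tilde\rho(1)H = H$. The real content here is converting the norm-level non-degeneracy of $J$ acting on $X$ into the spatial statement that $J$ acts non-degenerately on each representation space; once this passage through the second dual is justified, the remainder is formal.

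Finally I would close with a short orthogonality computation. Having established $\rho(J)H = H = \tilde\rho(P_X)H \oplus \tilde\rho(1-P_X)H = \tilde\rho(P_X)H \oplus \rho(\ker\varphi)H$, full Cuntz-Pimsner covariance $N = \tilde\rho(P_X)H$ immediately yields $\rho(J)H = N\oplus \rho(\ker\varphi)H$. Conversely, if this identity holds then, since $N\subseteq \tilde\rho(P_X)H$ while $\rho(\ker\varphi)H = \tilde\rho(1-P_X)H = H\ominus\tilde\rho(P_X)H$, cancelling the common orthogonal summand $\rho(\ker\varphi)H$ forces $N = \tilde\rho(P_X)H$, i.e.\ maximality by Theorem~\ref{t:criterion}. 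The identification $\Ss{(\ker\varphi)} = \ker\tilde\varphi$ and the cancellation of the common summand are the only routine verifications left.
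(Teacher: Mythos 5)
There is a genuine gap, and it sits exactly at what you yourself identify as the key step. You claim that ``taking second duals is exact'' gives $\Ss{(\ker\varphi)} = \ker\tilde\varphi = \ssA(1-P_X)$, hence $P_{\ker\varphi} = 1 - P_X$, and from this you deduce $P_{\varphi^{-1}(\cK(X))} = 1$, i.e.\ that $\varphi^{-1}(\cK(X))$ acts non-degenerately on \emph{every} representation of $A$. Exactness of the bidual functor only identifies $\Ss{(\ker\varphi)}$ with the kernel of $\ssA\to\Ss{(A/\ker\varphi)}$; the map $\tilde\varphi$ then factors through the normal extension of the \emph{injective} map $A/\ker\varphi\to\cL(X)=\mathcal{M}(\cK(X))$, and normal extensions of injective maps into multiplier algebras are generally not injective (e.g.\ $C(\mathbb{T})$ acting on $\ell^2(\Z)$ by the bilateral shift: the extension kills every $\mathbf{1}_{\{z\}}$). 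So one only gets $P_{\ker\varphi}\le 1-P_X$. Worse, for an ideal $I\subset A$ one has $P_I=1$ if and only if $I=A$ (otherwise $A/I$ carries a state vanishing on $I$), so your conclusion $P_{\varphi^{-1}(\cK(X))}=1$ forces $X$ to be \emph{proper}; your argument therefore only covers the case where the asserted identity degenerates to $H=\overline{t(X)H}\oplus\rho(\ker\varphi)H$. A concrete almost proper, non-proper correspondence: $X=L\otimes_{\C}C(\mathbb{T})$ where $L=\ell^2(\N)$ carries the diagonal action by evaluation at distinct points $z_n\to z_\infty$. Here $I=\varphi^{-1}(\cK(X))=\{f: f(z_\infty)=0\}$ acts non-degenerately on $X$ but annihilates the irreducible representation at $z_\infty$, so on the induced representation built from that point one has $\rho(I)H=\overline{t(X)H}\subsetneq H$ --- the proposition applies there and its right-hand side is not all of $H$.

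The missing ingredient is the complementary estimate that avoids any claim about $P_I$ being full. Writing $I=\varphi^{-1}(\cK(X))$ and $J=\ker\varphi$, the injective $*$-homomorphism $I/J\hookrightarrow\cK(X)$ between C*-algebras (not multiplier algebras) \emph{does} bidualize to an injection $\Ss{(I/J)}=\ssA(P_I-P_J)\hookrightarrow\cL(\ssX)$, which yields $P_I-P_J\le P_X$. Combined with the part of your argument that is correct --- $\tilde\varphi(P_I)=1$ by almost properness and $\tilde\varphi(P_J)=0$, hence $P_X\le P_I-P_J$ --- one gets $P_X=P_I-P_J$, and Lemma~\ref{l:ssPI} then turns $\overline{t(X)H}=\tilde\rho(P_I-P_J)H$ into the stated identity $\rho(I)H=\overline{t(X)H}\oplus\rho(J)H$. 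In short: the projection complementary to $P_X$ inside $P_I$ is $P_{\ker\varphi}$, not $1-P_I$; you need the quotient $I/\ker\varphi$, not $\ker\varphi$ alone.
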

\begin{proof}
	Let $I = \varphi^{-1}(\cK(X))$ and $J = \ker\varphi \subset I$.
	The inclusion $I/J \hookrightarrow \cK(X)$ induces the inclusion $\Ss{(I/J)} = \ssA (P_I-P_J)\hookrightarrow \cL(\ssX)$.
	Therefore, we have $P_I - P_J \leq P_X$.

	By Lemma~\ref{l:ssPI} and the assumption that $X$ is almost proper, we have $\tilde\varphi(P_I)\ssX = \ssX$ and $\tilde\varphi(P_J)\ssX = 0$.
	Therefore, $P_I-P_J$ acts by the identity on $X$ which implies $P_X \leq P_I - P_J$ and thus $P_X = P_I - P_J$.
	We conclude that a representation $(H, t)$ is fully Cuntz-Pimsner covariant if and only if $\overline{t(X)H} = \tilde\rho(P_I - P_J)H = \rho(I)H\ominus\rho(J)H$.
\end{proof}

\section{Boundary representations}\label{s:boundary}
Let $X$ be a C*-correspondence over $A$.
An irreducible representation of $\cT_X$ is called boundary if its restriction to the operator system $\cT_X^+ + (\cT_X^+)^*$ satisfies the UEP.
The noncommutative Choquet boundary $\partial_C \cT_X^+$ is the set of unitary equivalence classes of boundary representations.
Since any boundary representation factors through the C*-envelope, $\partial_C \cT_X^+$ is naturally a subset of $\widehat{\cO}_X$.
The goal of this section is to compute $\partial_C X\coloneqq \partial_C \cT_X^+$.

We say that a representation $(H, t)$ of a C*-correspondence $X$ is irreducible if the induced representation $\rho_H\rtimes t$ of $\cT_X$ is irreducible.
By Proposition~\ref{p:uep-equiv}, any boundary representation of $\cT_X$ is induced, up to unitary equivalence, by an irreducible maximal representation of $X$.
Therefore, we also refer to irreducible maximal representations of $X$ as boundary representations.

A representation $(H,t)$ is called fully coisometric if $\overline{t(X)H} = H$.
Fully coisometric representations are automatically fully Cuntz-Pimsner covariant and thus they are maximal.
By the Wold decomposition~\cite{muhly-solel-wold} for correspondences, every $X$-representation is a direct sum of fully coisometric and induced parts.
Therefore, irreducible representations of $\cT_X$ and $\cO_X$ are either fully co-isometric or induced:
$\widehat{\cT}_X = {(\widehat{\cT}_X)}_{\mathrm{fc}} \sqcup {(\widehat{\cT}_X)}_{\mathrm{ind}}$ and $\widehat{\cO}_X = {(\widehat{\cO}_X)}_{\mathrm{fc}} \sqcup {(\widehat{\cO}_X)}_{\mathrm{ind}}$.

Since fully coisometric representations are Cuntz-Pimsner covariant, we have ${(\widehat{\cT}_X)}_{\mathrm{fc}} = {(\widehat{\cO}_X)}_{\mathrm{fc}}$ and we denote this set by $\widehat{X}_{\mathrm{fc}}$.
By the maximality of fully coisometric representations, we have $\widehat{X}_{\mathrm{fc}} \subset\partial_C X$.
To describe $\partial_C X$ we thus only need to classify the induced boundary representations.

\begin{lemma}\label{l:ind-irr}
	An induced representation $(M^X, s_M)$ is irreducible if and only if $M$ is an irreducible representation of $A$.
	Therefore, there is a bijection $\operatorname{ind}\colon \widehat{A} \to {(\widehat{\cT}_X)}_{\mathrm{ind}}$ such that $\operatorname{ind}\widehat{A/\cI_X} = {(\widehat{\cO}_X)}_{\mathrm{ind}}$.
\end{lemma}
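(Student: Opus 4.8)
The plan is to reduce the irreducibility of the induced representation $(M^X, s_M)$ to that of $M$ by identifying the commutant of the associated $\cT_X$-representation $\rho_{M^X}\rtimes s_M$ with the commutant $\rho_M(A)'$. I would begin by recording, as in the proof of Corollary~\ref{cor:ind-max}, that $M = M^X\ominus\overline{s_M(X)M^X}$ is an $A$-reducing subspace on which the representation restricts to $\rho_M$. Since $\cT_X$ is generated by $A$ and $X$, an operator $T\in\cB(M^X)$ lies in the commutant precisely when it commutes with every $\rho_{M^X}(a)$ and every $s_M(\xi)$. Any such $T$ maps $\overline{s_M(X)M^X}$ into itself, and because $T^*$ does too, it preserves the complement $M$; thus restriction $T\mapsto T|_M$ is a unital $*$-homomorphism from the commutant into $\rho_M(A)'$.

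Next I would prove this restriction is a $*$-isomorphism, which forces the two commutants to be simultaneously trivial and hence settles the irreducibility equivalence. For injectivity, the vectors $s_M(\xi_1)\cdots s_M(\xi_k)v$ (with $v\in M$, $\xi_i\in X$) span $X^{\otimes k}\otimes_A M$ and therefore $M^X$; if $T$ is in the commutant with $T|_M = 0$, then $T\,s_M(\xi_1)\cdots s_M(\xi_k)v = s_M(\xi_1)\cdots s_M(\xi_k)\,T v = 0$, so $T = 0$. For surjectivity, a given $P\in\rho_M(A)'$ is $A$-linear, so $1_{X^{\otimes k}}\otimes P$ is well defined on each balanced tensor product, and $\tilde P = \bigoplus_{k\geq 0}(1_{X^{\otimes k}}\otimes P)$ commutes with $\rho_{M^X}(A)$ and with every creation operator $s_M(\xi)$—each of these acts on tensor factors other than the last, on which $\tilde P$ acts—while $\tilde P|_M = P$. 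This yields the first assertion: $(M^X, s_M)$ is irreducible if and only if $M$ is.

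With the first assertion in hand, the map $\operatorname{ind}\colon [M]\mapsto[(M^X, s_M)]$ is well defined (an $A$-equivalence $M\to N$ tensors up to an $X$-equivalence $M^X\to N^X$) and surjects onto $(\widehat{\cT}_X)_{\mathrm{ind}}$, since by definition every irreducible induced representation is equivalent to some $(M^X, s_M)$ with $M$ necessarily irreducible. For injectivity I would take a unitary $U\colon M^X\to N^X$ intertwining the two representations: from $U\,s_M(\xi) = s_N(\xi)\,U$ one gets $U\bigl(\overline{s_M(X)M^X}\bigr) = \overline{s_N(X)N^X}$, hence $U$ carries $M$ onto $N$, and as $U$ also intertwines $\rho_{M^X}$ and $\rho_{N^X}$, the restriction $U|_M$ is an $A$-isomorphism $M\cong N$. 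Thus $\operatorname{ind}$ is a bijection.

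Finally, to locate $\operatorname{ind}\widehat{A/\cI_X}$, I would observe that $M\in\widehat{A/\cI_X}$ means $\rho_M(\cI_X) = 0$, equivalently $\tilde\rho_M(P_{\cI_X}) = 0$ by Lemma~\ref{l:ssPI}. On the other hand, $(M^X, s_M)$ represents a class in $(\widehat{\cO}_X)_{\mathrm{ind}}$ exactly when it is Cuntz-Pimsner covariant, which by Lemma~\ref{l:cp-ideal-covariant} is the same as $P_{\cI_X}$-relative Cuntz-Pimsner covariance; rerunning the argument of Corollary~\ref{cor:ind-max} with $P_{\cI_X}$ replacing $P_X$ shows this holds iff $\tilde\rho_M(P_{\cI_X}) = 0$. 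Combining these equivalences with the bijectivity of $\operatorname{ind}$ gives $\operatorname{ind}\widehat{A/\cI_X} = (\widehat{\cO}_X)_{\mathrm{ind}}$. I expect the only genuinely delicate point to be the surjectivity step of the second paragraph—verifying that the lift $\tilde P$ is well defined on the balanced tensor products and really commutes with the creation operators; the rest is bookkeeping on the Fock grading together with the cited lemmas.
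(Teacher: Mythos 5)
Your argument is correct and is essentially the paper's proof recast in commutant form: the paper shows directly that a $\cT_X$-reducing projection $P$ satisfies $PM\subset M$ (your ``$T$ preserves $M$'' step, using the same characterization of $M$ as the orthogonal complement of $\overline{s_M(X)M^X}$), deduces $PM=0$ or $PM=M$ from irreducibility of $M$, and concludes via the spanning of $M^X$ by the subspaces $s_M(X)^kM$ (your injectivity step), while your lift $\tilde P$ of $P\in\rho_M(A)'$ is the operator-level version of the paper's observation that $(M')^X$ is reducing when $M=M'\oplus M''$. Your treatment of the bijection and of $\operatorname{ind}\widehat{A/\cI_X}=(\widehat{\cO}_X)_{\mathrm{ind}}$, which the paper leaves implicit, is also correct and uses Lemmas~\ref{l:ssPI} and~\ref{l:cp-ideal-covariant} exactly as intended.
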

\begin{proof}
	Suppose that $M = M'\oplus M''$ is not irreducible.
	Then, $(M')^X \subset M^X$ is a proper $\cT_X$-reducing subspace and the induced representation on $M^X$ is not irreducible.
	Conversely, suppose that $M^X$ is not irreducible and consider an $\cT_X$-reducing subspace $H\subset M^X$ having orthogonal projection $P$.
	We claim that $PM\subset M$.
	Indeed, $M = M^X\ominus s_M(X)M^X$ can be characterized as a subspace of vectors $v$ having $\langle v, s_M(\xi)w\rangle = 0$ for all $\xi\in X$ and $w\in M^X$.
	By $\cT_X$-reducibility, we have $\langle Pv, s_M(\xi)w\rangle = \langle v, s_M(\xi)P w\rangle = \langle v, s_M(\xi)w \rangle = 0$ and thus $Pv\in M$.

	Since $PM$ is an $A$-subrepresentation of an irreducible representation $M$, we have either $PM=0$ or $PM=M$.
	If $PM=M\subset H$, then $H = M^X$ since subspaces $t(X)^nM$ for $n\geq 0$ span the whole $M^X$.
	Otherwise, we have $PM = 0$ which implies $M = (1-P)M \subset H^\perp$ and $H^\perp = M^X$ for the same reason.
	We conclude that $M^X$ does not contain any proper $\cT_X$-reducing subspace.
\end{proof}

\begin{definition}\label{d:atomic}
	The \emph{atomic spectrum $\sigma_a(X) \subset \widehat{A}$ of $X$} consists of those irreducible representations $(M, \pi_M) \in \widehat{A}$ with $\tilde{\pi}_M(P_X) = 1_H$ or, equivalently, $P_{\pi_M}\leq P_X$.
\end{definition}
The term \emph{atomic spectrum} is motivated by Proposition~\ref{p:hyperrigidity-of-motimesa} which says that the atomic spectrum of the correspondence constructed from a representation coincides with the set of isomorphism classes of irreducible subrepresentations (atoms).

Note that $\widehat{\cI}_X\subset \sigma_a(X)$ since $P_{\cI_X}\leq P_X$ by the proof of Corollary~\ref{cor:katsoulis-kribs}.
It turns out that the equality holds exactly when the noncommutative Choquet boundary coincides with the noncommutative Shilov boundary.
\begin{theorem}\label{t:choquet}
	Let $X$ be a C*-correspondence over a C*-algebra $A$.
	The Choquet boundary of $X$ consists of all irreducible representations of $\cO_X$ except those induced from elements of the atomic spectrum:
	\[
		\partial_C X = \widehat{\cO}_X\setminus \operatorname{ind}\left(\sigma_a(X)\right) = \widehat{X_{\mathrm{fc}}} \sqcup \operatorname{ind}\left(\widehat{A}\setminus \sigma_a(X)\right).
	\]
	In particular, $\partial_C X = \widehat{\cO}_X$ if and only if $\sigma_a(X) = \widehat{\cI}_X$.
\end{theorem}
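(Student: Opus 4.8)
The plan is to reduce everything to classifying the \emph{induced} boundary representations. The discussion preceding the theorem already establishes that $\widehat{X}_{\mathrm{fc}} \subset \partial_C X$ and that every irreducible representation of $\cO_X$ is either fully coisometric or induced, so I would write $\partial_C X = \widehat{X}_{\mathrm{fc}} \sqcup (\partial_C X)_{\mathrm{ind}}$ and concentrate all the work on the second summand. By Lemma~\ref{l:ind-irr} the irreducible induced representations of $\cO_X$ are exactly $\operatorname{ind}(\widehat{A/\cI_X})$, so I only need to decide which of these are maximal.

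The main point is to combine Corollary~\ref{cor:ind-max} with Definition~\ref{d:atomic}. Corollary~\ref{cor:ind-max} says $(M^X, s_M)$ is maximal if and only if $\tilde\rho_M(P_X) = 0$. The observation that makes the statement crystallize is that for an \emph{irreducible} $M$ the operator $\tilde\rho_M(P_X)$ is forced to be a scalar: since $P_X \in Z(\ssA)$, its image under the normal extension $\tilde\rho_M$ is central in $\tilde\rho_M(\ssA) = \rho_M(A)'' = \cB(M)$, hence $\tilde\rho_M(P_X) \in \{0,1\}$. By Definition~\ref{d:atomic} the value $1$ occurs precisely when $M \in \sigma_a(X)$, so the dichotomy ``maximal versus atomic'' is exhaustive: $(M^X,s_M)$ is boundary if and only if $M \notin \sigma_a(X)$. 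This identifies $(\partial_C X)_{\mathrm{ind}} = \operatorname{ind}\bigl(\widehat{A/\cI_X}\setminus\sigma_a(X)\bigr)$.

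It remains to tidy the index sets. Using the standard decomposition $\widehat{A} = \widehat{\cI}_X \sqcup \widehat{A/\cI_X}$ together with the inclusion $\widehat{\cI}_X \subset \sigma_a(X)$ noted before the theorem, I get $\widehat{A/\cI_X}\setminus\sigma_a(X) = \widehat{A}\setminus\sigma_a(X)$, which yields the second displayed equality $\partial_C X = \widehat{X}_{\mathrm{fc}} \sqcup \operatorname{ind}(\widehat{A}\setminus\sigma_a(X))$. For the first displayed equality I would start from $\widehat{\cO}_X = \widehat{X}_{\mathrm{fc}}\sqcup\operatorname{ind}(\widehat{A/\cI_X})$ and invoke injectivity of $\operatorname{ind}$ (so that $\operatorname{ind}(B)\setminus\operatorname{ind}(C) = \operatorname{ind}(B\setminus C)$) to subtract $\operatorname{ind}(\sigma_a(X))$ and recover $\partial_C X$; the fully coisometric part is untouched since those classes lie outside $\operatorname{ind}(\widehat{A})$.

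For the ``in particular'' clause, $\partial_C X = \widehat{\cO}_X$ holds exactly when the removed set $\operatorname{ind}(\sigma_a(X))\cap\widehat{\cO}_X = \operatorname{ind}\bigl(\sigma_a(X)\cap\widehat{A/\cI_X}\bigr)$ is empty, i.e. when $\sigma_a(X)\setminus\widehat{\cI}_X = \emptyset$, i.e. when $\sigma_a(X)\subset\widehat{\cI}_X$; together with the reverse inclusion this gives $\sigma_a(X)=\widehat{\cI}_X$. I do not expect a genuine obstacle: the only step needing real care is the scalar-valued argument in the second paragraph, verifying that irreducibility collapses the central projection $\tilde\rho_M(P_X)$ to $\{0,1\}$ so that maximality and membership in $\sigma_a(X)$ are genuinely complementary; everything else is bookkeeping with the disjoint decompositions of $\widehat{A}$ and $\widehat{\cO}_X$ and with the bijection $\operatorname{ind}$.
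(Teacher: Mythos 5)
Your proposal is correct and follows essentially the same route as the paper: split $\widehat{\cO}_X$ via the Wold decomposition into fully coisometric and induced parts, note the former are automatically maximal, and settle the induced case by combining Corollary~\ref{cor:ind-max} with the observation that irreducibility of $M$ forces the central projection $\tilde\rho_M(P_X)$ to be $0$ or $1$, so non-maximality is exactly membership of $M$ in $\sigma_a(X)$. Your bookkeeping with the index sets and the ``in particular'' clause matches the paper's (and your phrasing of the scalar argument, placing $\tilde\rho_M(P_X)$ in the center of $\rho_M(A)''=\cB(M)$, is in fact slightly cleaner than the paper's).
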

\begin{proof}
	Let $(H, t)\in \widehat{\cO}_X$ be an irreducible representation of $X$.
	It is enough to prove that $(H, t)$ is not maximal if and only if it is induced from a representation in the atomic spectrum.

	By Wold decomposition, $(H, t)$ is either fully coisometric or induced.
	If it is fully coisometric, then it is maximal.
	Suppose that it is induced by an $A$-representation $M$ which is irreducible by Lemma~\ref{l:ind-irr}.
	Since $P_X$ is a central projection, $\tilde{\rho}_M(P_X)\in \rho_M(A)'' = \C$ and thus either $\tilde{\rho}_M(P_X) = 0$ or $M\in \sigma_a(X)$.
	By Corollary~\ref{cor:ind-max}, $(H, t)$ is maximal if and only if $\tilde{\rho}_M(P_X) = 0$.
	We conclude that $(H, t)$ is non-maximal exactly when $M\in \sigma_a(X)$.
\end{proof}
Arveson~\cite{arveson-hyperrigidity} conjectured that an operator system is hyperrigid if and only if all irreducible representations of its C*-envelope are boundary.
The author together with Dor-On~\cite{hyp-counter} found an elementary type I counterexample to the conjecture.
However, it is still interesting to understand for which classes of operator systems the conjecture holds.

We say that a C*-correspondence $X$ violates the hyperrigidity conjecture if $\partial_C X = \widehat{\cO}_X$ but $X$ is not hyperrigid.
Theorem~\ref{t:choquet} has an immediate consequence that $X$ violates the hyperrigidity conjecture if and only if $\sigma_a(X) = \widehat{\cI}_X$ while $\varphi(\cI_X)X\neq X$.
We will see that there are plenty of correspondences having this property.

Let $L$ be a Hilbert space together with an $A$-representation $\rho_L\colon A\to \cB(L)$.
We can view $L$ as an $A$-$\C$-correspondence and $A$ as a $\C$-$A$-corres\-pondence to produce an $A$-correspondence $L\otimes_\C A$.
Kumjian~\cite{Kumj04} showed that the Cuntz-Pimsner algebra $\cO_{L\otimes_\C A}$ is purely infinite and simple provided that $\rho_L(A)\cap \cK(L) = 0$.
\begin{proposition}\label{p:hyperrigidity-of-motimesa}
	The atomic spectrum $\sigma_a(L\otimes_\C A)$ coincides with the atomic spectrum of $L$, that is, the set of isomorphism classes of irreducible subrepresentations of $L$.
	Consequently, if $L$ does not contain any irreducible subrepresentations, then $L\otimes_\C A$ violates the hyperrigidity conjecture.
\end{proposition}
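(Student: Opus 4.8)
The plan is to reduce the computation of $\sigma_a(L\otimes_\C A)$ to the support projection of $\rho_L$ itself. Write $X=L\otimes_\C A$. As a right Hilbert $A$-module, $X$ is the standard module built from the Hilbert space $L$: choosing an orthonormal basis of $L$ exhibits $X$ as a direct sum of copies of $A$. Under this identification one has $\cK(X)\cong\cK(L)\otimes A$ and $\cL(X)\cong\mathcal{M}(\cK(L)\otimes A)$, and the left action is $\varphi_X(b)=\rho_L(b)\otimes 1_A$ for $b\in A$. The entire statement should then follow from the key identity $P_X=P_{\rho_L}$ in $\ssA$, where $P_{\rho_L}$ denotes the support projection of $\rho_L\colon A\to\cB(L)$.

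To prove $P_X=P_{\rho_L}$ I would pass to second duals. By \cite[Proposition 8.5.17]{BLM} we have $\cL(\ssX)=\cK(X)^{**}$, and $P_X$ is by definition the support of the normal extension $\tilde\varphi_X\colon\ssA\to\cK(X)^{**}$. The map $\cK(L)\to\mathcal{M}(\cK(L)\otimes A)$, $S\mapsto S\otimes 1_A$, extends to a normal $*$-homomorphism $\cB(L)=\cK(L)^{**}\to\cK(X)^{**}$ which is \emph{injective}: a left inverse is furnished by the double dual of a slice map $\cK(L)\otimes A\to\cK(L)$, $S\otimes a\mapsto f(a)S$, for a state $f$ on $A$. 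Composing $\tilde\rho_L\colon\ssA\to\cB(L)$ with this injection yields a normal homomorphism agreeing with $\varphi_X$ on $A$, so by uniqueness of the normal extension it equals $\tilde\varphi_X$. Since the injection is kernel-free, $\ker\tilde\varphi_X=\ker\tilde\rho_L=\ssA(1-P_{\rho_L})$, whence $P_X=P_{\rho_L}$. I expect this identification of the support of the left action with $P_{\rho_L}$ — in particular the harmless-looking but slightly delicate behaviour of second duals under the tensor factor $\cK(L)$ — to be the main obstacle; everything else is formal.

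With $P_X=P_{\rho_L}$ in hand, the description of $\sigma_a(X)$ is pure representation theory. For an irreducible $(M,\pi_M)\in\widehat A$ the normal extension $\tilde\pi_M$ maps $\ssA$ onto the factor $\cB(M)$, so $\tilde\pi_M(P_X)\in\{0,1\}$, and it equals $1$ exactly when $P_{\pi_M}\le P_X=P_{\rho_L}$. Since $P_{\pi_M}$ is a minimal central projection of $\ssA$, the condition $P_{\pi_M}\le P_{\rho_L}$ says precisely that $\pi_M$ is quasi-contained in $\rho_L$, which for an irreducible representation means $\pi_M$ is unitarily equivalent to a subrepresentation of $\rho_L$ (see, e.g., \cite{Blackadar}). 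Hence $\sigma_a(X)$ is exactly the set of isomorphism classes of irreducible subrepresentations of $L$, i.e. the atomic spectrum of $L$. (Alternatively one can avoid naming $P_X$ by combining Corollary~\ref{cor:ind-max} with the same quasi-containment argument: $\pi_M\in\sigma_a(X)$ iff the induced representation $(M^X,s_M)$ fails to be maximal.)

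Finally I would deduce the counterexample statement. Assume $L$ contains no irreducible subrepresentation, so that $\sigma_a(X)=\emptyset$. As noted before the proposition, $\widehat{\cI}_X\subseteq\sigma_a(X)$ because $P_{\cI_X}\le P_X$, hence $\widehat{\cI}_X=\emptyset$ and therefore Katsura's ideal vanishes, $\cI_X=0$. Consequently $\sigma_a(X)=\emptyset=\widehat{\cI}_X$, so by Theorem~\ref{t:choquet} the Choquet boundary is everything, $\partial_C X=\widehat{\cO}_X$; on the other hand $\varphi(\cI_X)X=0\neq X$ (since $X\neq 0$), so by Corollary~\ref{cor:kim} the correspondence is not hyperrigid. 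Thus $X=L\otimes_\C A$ violates the hyperrigidity conjecture.
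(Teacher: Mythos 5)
Your argument is correct and follows essentially the same route as the paper: the whole proposition rests on the identity $\ker\tilde\varphi_X=\ker\tilde\rho_L$ (equivalently $P_X=P_{\rho_L}$), which the paper asserts and you verify in detail via the identification $\cK(X)\cong\cK(L)\otimes A$ and a slice-map left inverse, followed by the standard fact that for an irreducible $\pi_M$ the condition $P_{\pi_M}\le P_{\rho_L}$ means containment in $L$. For the final claim you take the route through $\cI_X=0$ and Kim's criterion, which the paper explicitly offers as an alternative to its argument via non-maximality of the induced representation $L^X$.
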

\begin{proof}
	Let $X = L\otimes_\C A$.
	We have $\ker \tilde{\varphi}_X = \ker \tilde{\rho}_L$.
	Therefore, we also have $\tilde{\rho}_L(A^{**}) = \rho_L(A)'' \cong A^{**}P_X$.
	By the elementary theory of von Neumann algebras, $P_X$ acts non-degenerately on an irreducible representation of $A$ if and only if the representation is contained in $L$.

	If $L$ does not contain any irreducible subrepresentations, then $\sigma_a(X) = \emptyset$ and $\partial_C X = \widehat{\cO}_X$ by Theorem~\ref{t:choquet}.
	On the other hand, the induced representation $L^X$ of $X$ is not maximal by Corollary~\ref{cor:ind-max} and thus $X$ is not hyperrigid.
	Alternatively, one could show that $\cI_X = 0$ and apply Kim's hyperrigidity criterion.
\end{proof}

\begin{example}
	Let $A = C^*(\Z)$ and $X = \ell^2(\Z, C^*(\Z))$ with basis ${(e_k)}_{k\in \Z}$.
	The group $\Z$ acts on $X$ by the automorphism $e_k\mapsto e_{k+1}$, which defines the correspondence structure $\varphi_X\colon C^*(\Z) \to \cL(X)$.
	An $X$-representation on $H$ is equivalent to isometries $S_k = t(e_k)$ with pairwise orthogonal ranges and a unitary $\rho(\delta_1) = U$ such that $US_k = S_{k+1}$.
	Therefore, the purely infinite simple C*-algebra $\cO_X$ is obtained by attaching a unitary satisfying the above relation to the infinite Cuntz algebra $\cO_\infty$.

	The correspondence $X$ is isomorphic to the tensor product $\ell^2(\Z) \otimes_{\C} C^*(\Z)$, where the group $\Z$ acts on $\ell^2(\Z)$ by the bilateral shift.
	Since the bilateral shift does not have any eigenvectors, $X$ violates the hyperrigidity conjecture by Theorem~\ref{p:hyperrigidity-of-motimesa}.
\end{example}

\begin{example}
	Let $\theta\in (0,1)$ be an irrational number and let $A_\theta = \langle u, v\colon u^*=u^{-1},v^*=v^{-1}, uv = e^{2\pi i \theta}vu\rangle$ be the irrational rotation algebra.
	The algebra $A_\theta$ is a simple C*-algebra with $\widehat{A}_\theta = \mathbb{T}/\Z\theta$.
	In particular, it is not of Type I.

	Let $M = \ell^2(\Z\times \Z)$ with basis ${(\xi_{k, j})}_{k,j\in \Z}$.
	Define unitaries $U, V$ on $M$ by $U\xi_{k,j} = \xi_{k+1,j}$ and $V\xi_{k, j} = e^{-2\pi i\theta k}\xi_{k, j+1}$.
	Setting $\rho(u) = U$ and $\rho(v) = V$ defines a representation $A_\theta$ on $M$.
	This representation satisfies the assumption of Theorem~\ref{p:hyperrigidity-of-motimesa} since in any irreducible representation of $u$ has an eigenvector, whereas $U$ is a direct sum of bilateral shifts and does not contain any.
	Therefore, $X = M\otimes_{\C} A_\theta$ violates the hyperrigidity conjecture.
\end{example}

\begin{proposition}\label{p:conjecture-proper}
	If $X$ is a proper correspondence, then $\sigma_a(X) = \widehat{\varphi_X(A)} = \widehat{A}\setminus\widehat{\ker \varphi_X}$.
	Consequently, Arveson's hyperrigidity conjecture holds for proper correspondences.
\end{proposition}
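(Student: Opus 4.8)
The plan is to compute the support projection $P_X$ of a proper correspondence explicitly and then read off both assertions. The equality $\widehat{\varphi_X(A)} = \widehat{A}\setminus\widehat{\ker\varphi_X}$ is the standard description of the spectrum of $\varphi_X(A)\cong A/\ker\varphi_X$: the irreducible representations of the quotient are exactly those representations of $A$ vanishing on $\ker\varphi_X$. So the real content of the first assertion is the identification of $\sigma_a(X)$, and by Definition~\ref{d:atomic} this amounts to showing that for proper $X$ one has $P_X = 1 - P_{\ker\varphi_X}$.

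To establish this, I would use that properness makes $\varphi_X$ a $*$-homomorphism into the C*-algebra $\cK(X)$, not merely into $\cL(X) = \mathcal{M}(\cK(X))$. Since $\cL(\ssX) = \cK(X)^{**}$, the normal extension $\tilde\varphi_X$ is then literally the second-dual map $\varphi_X^{**}\colon \ssA \to \cK(X)^{**}$. Factoring $\varphi_X$ as $A \twoheadrightarrow A/\ker\varphi_X \xrightarrow{\ \bar\varphi_X\ } \cK(X)$ and dualizing, the quotient contributes a normal surjection with kernel $\Ss{(\ker\varphi_X)} = \ssA P_{\ker\varphi_X}$, while $\bar\varphi_X$, being injective and hence isometric, retains an injective second dual. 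Thus $\ker\tilde\varphi_X = \Ss{(\ker\varphi_X)} = \ssA P_{\ker\varphi_X}$, which is exactly $P_X = 1 - P_{\ker\varphi_X}$. For irreducible $\pi_M$ it then follows, using Lemma~\ref{l:ssPI} and that $\tilde\pi_M$ is unital, that $M \in \sigma_a(X)$ iff $\tilde\pi_M(P_X) = 1$ iff $\tilde\pi_M(P_{\ker\varphi_X}) = 0$ iff $\pi_M$ kills $\ker\varphi_X$, i.e. iff $M \in \widehat{A}\setminus\widehat{\ker\varphi_X}$. This proves the first claim.

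For the conjecture, by Theorem~\ref{t:choquet} the only case to check is $\partial_C X = \widehat{\cO}_X$, equivalently $\sigma_a(X) = \widehat{\cI}_X$, and I must show $X$ is then hyperrigid. Writing $J = \ker\varphi_X$, properness gives $\cI_X = \varphi_X^{-1}(\cK(X)) \cap J^{\perp} = J^{\perp}$, so $\widehat{\cI}_X = \widehat{J^{\perp}}$. Since $J\cdot J^{\perp} = 0$, no irreducible representation can act non-degenerately on both $J$ and $J^{\perp}$, whence $\widehat{J}\cap\widehat{J^{\perp}} = \emptyset$; combined with the first claim $\sigma_a(X) = \widehat{A}\setminus\widehat{J}$, the hypothesis $\sigma_a(X) = \widehat{J^{\perp}}$ forces $\widehat{A} = \widehat{J}\sqcup\widehat{J^{\perp}} = \widehat{J\oplus J^{\perp}}$ and hence $J \oplus J^{\perp} = A$. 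Then $\varphi_X$ vanishes on $J$, so by non-degeneracy of the left action $\overline{\varphi(\cI_X)X} = \overline{\varphi(J^{\perp})X} = \overline{\varphi(A)X} = X$, and $X$ is hyperrigid by Corollary~\ref{cor:kim} (equivalently $P_{\cI_X} = P_{J^{\perp}} = 1 - P_J = P_X$).

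The step I expect to be the crux is the identity $P_X = 1 - P_{\ker\varphi_X}$, which genuinely consumes properness: only when $\varphi_X$ takes values in $\cK(X)$ does the kernel of the normal extension collapse to the weak*-closure of $\ker\varphi_X$. For general correspondences $P_X$ can sit strictly below $1 - P_{\ker\varphi_X}$, and that gap is exactly what powers the counterexamples of Proposition~\ref{p:hyperrigidity-of-motimesa}; the essential work is verifying that properness eliminates it.
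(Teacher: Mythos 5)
Your proof is correct and follows essentially the same route as the paper: both arguments rest on the identity $P_X = 1 - P_{\ker\varphi_X}$ (which the paper asserts without justification and you rightly establish by factoring $\varphi_X^{**}$ through $\Ss{(A/\ker\varphi_X)}$, using that properness puts $\varphi_X(A)$ inside $\cK(X)$), and both then combine Theorem~\ref{t:choquet} with Kim's criterion (Corollary~\ref{cor:kim}). The only cosmetic difference is in the last step, where the paper deduces $\varphi_X(\cI_X)=\varphi_X(A)$ from the equality of spectra of an ideal and its ambient algebra, while you instead derive $A=\ker\varphi_X\oplus(\ker\varphi_X)^{\perp}$; the two are interchangeable.
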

\begin{proof}
	We have $P_X = 1 - P_{\ker \varphi_X}$ and thus $M\in \sigma_a(X)$ if and only if $\tilde\rho_M(P_{\ker\varphi_X}) = 0$, which is equivalent to $\ker\varphi_X \subset \ker \rho$.
	Therefore, $\sigma_a(X)$ consists of the irreducible representations which factor through $A\twoheadrightarrow \varphi_X(A) \cong A/\ker\varphi_X$.

	Suppose now that $\partial_C X = \widehat{\cO}_X$.
	Therefore, we have $\widehat{\varphi_X(A)} = \partial_C X = \widehat{\cI}_X$ by Theorem~\ref{t:choquet}.
	Since $\cI_X \cong \varphi_X(\cI_X)$ is an ideal in $\varphi_X(A)$, this implies $\varphi_X(\cI_X) = \varphi_X(A)$.
	Therefore, the ideal $\cI_X$ acts non-degenerately on $X$ and the correspondence is hyperrigid.
	We conclude that Arveson's conjecture holds in this case.
\end{proof}

Suppose now that $A$ is a separable Type I C*-algebra.
It is equivalent to the fact that any irreducible representation is uniquely determined up to isomorphism by its kernel by Glimm's Theorem~\cite[Theorem 1]{glimm}.
Therefore, there is a canonical bijection between $\widehat{A}$ and $\operatorname{Prim}A$ and we identify those sets.
Moreover, if $A = C_0(Y)$, then we also identify $\widehat{A}$ and $\operatorname{Prim}A$ with $Y$.

Let $\mathfrak{P} \in \operatorname{Prim}A$ be some primitive ideal and let $\pi_{\mathfrak{P}}$ be the corresponding irreducible representation on $K_{\mathfrak{P}}$.
Again by Glimm's Theorem~\cite[Theorem 1]{glimm}, the irreducible representation on $K_{\mathfrak{P}}$ is GCR, meaning that $\mathbb{K}(K_{\mathfrak{P}}) \subset \pi_{\mathfrak{P}}(A)$.
Then, $J_0(\mathfrak{P}) = \pi_{\mathfrak{P}}^{-1}(\mathbb{K}(K_{\mathfrak{P}}))$ is the smallest ideal of $A$ properly containing $\mathfrak{P}=\ker \pi_{\mathfrak{P}}$.
If $A$ is commutative, then every primitive ideal is maximal and $J_0(\mathfrak{P}) = A$.
\begin{lemma}
	The support projection $P_{\pi_{\mathfrak{P}}}$ equals to $P_{J_0(\mathfrak{P})} - P_{\mathfrak{P}}$.
\end{lemma}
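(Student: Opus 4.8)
The plan is to compute the support projection $P_{\pi_{\mathfrak{P}}}$ directly from its defining property: it is the unique central projection in $\ssA$ satisfying $\ker\tilde\pi_{\mathfrak{P}} = \ssA(1-P_{\pi_{\mathfrak{P}}})$, equivalently the central cover of the normal extension $\tilde\pi_{\mathfrak{P}}\colon \ssA \to \cB(K_{\mathfrak{P}})$. The key structural fact to exploit is that $\pi_{\mathfrak{P}}$ is GCR, so $\mathbb{K}(K_{\mathfrak{P}}) \subset \pi_{\mathfrak{P}}(A)$ and $J_0(\mathfrak{P}) = \pi_{\mathfrak{P}}^{-1}(\mathbb{K}(K_{\mathfrak{P}}))$ is the smallest ideal strictly containing $\mathfrak{P}$. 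I would use the convention from Section~\ref{s:second-dual} that for an ideal $I \subset A$ the projection $P_I = \tilde\iota_I(1) \in \ssA$ is the support of the inclusion $I \hookrightarrow A$, and recall that on any representation $P_I$ acts as the projection onto $\overline{\rho(I)H}$ by Lemma~\ref{l:ssPI}.

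**The two inclusions.**

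First I would show $P_{\pi_{\mathfrak{P}}} \leq P_{J_0(\mathfrak{P})} - P_{\mathfrak{P}}$. Since $\mathfrak{P} = \ker\pi_{\mathfrak{P}}$, the ideal $\mathfrak{P}$ is annihilated by $\tilde\pi_{\mathfrak{P}}$, so $P_{\mathfrak{P}} \leq 1 - P_{\pi_{\mathfrak{P}}}$, i.e.\ $P_{\mathfrak{P}}P_{\pi_{\mathfrak{P}}} = 0$. Moreover $\pi_{\mathfrak{P}}$ factors through $A/\mathfrak{P}$ and the image contains $\mathbb{K}(K_{\mathfrak{P}})$, which is realized by $J_0(\mathfrak{P})$; since $\mathbb{K}(K_{\mathfrak{P}})$ acts nondegenerately on $K_{\mathfrak{P}}$, Lemma~\ref{l:ssPI} gives $\tilde\pi_{\mathfrak{P}}(P_{J_0(\mathfrak{P})})K_{\mathfrak{P}} = \overline{\pi_{\mathfrak{P}}(J_0(\mathfrak{P}))K_{\mathfrak{P}}} = K_{\mathfrak{P}}$, so $P_{\pi_{\mathfrak{P}}} \leq P_{J_0(\mathfrak{P})}$. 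Combining, $P_{\pi_{\mathfrak{P}}} \leq P_{J_0(\mathfrak{P})}(1-P_{\mathfrak{P}}) = P_{J_0(\mathfrak{P})} - P_{\mathfrak{P}}$, the last equality holding because $P_{\mathfrak{P}} \leq P_{J_0(\mathfrak{P})}$ as $\mathfrak{P} \subset J_0(\mathfrak{P})$.

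**The reverse inclusion and the main obstacle.**

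For the reverse inequality $P_{J_0(\mathfrak{P})} - P_{\mathfrak{P}} \leq P_{\pi_{\mathfrak{P}}}$, the natural route is to identify $P_{J_0(\mathfrak{P})} - P_{\mathfrak{P}}$ as the support of the ideal quotient $J_0(\mathfrak{P})/\mathfrak{P} \cong \mathbb{K}(K_{\mathfrak{P}})$, so that $\Ss{(J_0(\mathfrak{P})/\mathfrak{P})} = \ssA(P_{J_0(\mathfrak{P})}-P_{\mathfrak{P}})$ is the weak-* closure inside $\ssA$ of the compacts on $K_{\mathfrak{P}}$. This is a single minimal central summand of $\ssA$: since $J_0(\mathfrak{P})/\mathfrak{P}$ is a nonzero ideal isomorphic to the elementary C*-algebra $\mathbb{K}(K_{\mathfrak{P}})$, its second dual is the type I factor $\cB(K_{\mathfrak{P}})$, a minimal direct summand of $\ssA$ corresponding to the single point $\mathfrak{P} \in \operatorname{Prim}A$. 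The central projection $P_{J_0(\mathfrak{P})}-P_{\mathfrak{P}}$ is therefore an atom of $Z(\ssA)$, and since $\tilde\pi_{\mathfrak{P}}$ is nonzero on this summand (indeed faithful, as $\pi_{\mathfrak{P}}$ is faithful on $\mathbb{K}(K_{\mathfrak{P}})$), we must have $P_{\pi_{\mathfrak{P}}} = P_{J_0(\mathfrak{P})}-P_{\mathfrak{P}}$, giving both inclusions at once. The main obstacle I anticipate is making precise that $\Ss{(J_0(\mathfrak{P})/\mathfrak{P})}$ sits inside $\ssA$ as exactly the central summand $\ssA(P_{J_0(\mathfrak{P})}-P_{\mathfrak{P}})$ and that this is a minimal central projection; this requires carefully invoking the functoriality of second duals for the ideal inclusions $\mathfrak{P} \hookrightarrow J_0(\mathfrak{P}) \hookrightarrow A$ together with Glimm's structure theory, rather than just the computation with vectors in $K_{\mathfrak{P}}$. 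Once that identification is in place, the equality follows because a minimal central projection is either killed by a normal representation or equals its central cover.
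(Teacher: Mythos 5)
Your proof is correct, but it is organized differently from the paper's. The paper argues entirely on the level of representations: it takes an arbitrary $A$-representation $H$ with $\tilde\rho_H(P_{J_0(\mathfrak{P})}-P_{\mathfrak{P}})=1_H$, observes that then $J_0(\mathfrak{P})/\mathfrak{P}\cong\mathbb{K}(K_{\mathfrak{P}})$ acts non-degenerately while $\mathfrak{P}$ acts by zero, and invokes the representation theory of the compacts to conclude that $H$ is a multiple of $K_{\mathfrak{P}}$; hence the representations supported by $P_{J_0(\mathfrak{P})}-P_{\mathfrak{P}}$ are exactly those supported by $P_{\pi_{\mathfrak{P}}}$, which forces the two central projections to coincide. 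You instead split the claim into two inequalities and, for the nontrivial one, work algebraically inside $\ssA$: you identify $\ssA(P_{J_0(\mathfrak{P})}-P_{\mathfrak{P}})$ with $\Ss{(J_0(\mathfrak{P})/\mathfrak{P})}\cong\cB(K_{\mathfrak{P}})$, a factor, so that $P_{J_0(\mathfrak{P})}-P_{\mathfrak{P}}$ is a minimal projection of $Z(\ssA)$ dominating the nonzero central projection $P_{\pi_{\mathfrak{P}}}$. Both routes rest on the same structural input from Glimm's theorem (that $J_0(\mathfrak{P})/\mathfrak{P}$ is elementary), but yours trades the classification of non-degenerate representations of $\mathbb{K}(K_{\mathfrak{P}})$ for the exactness of the second-dual functor on the chain $\mathfrak{P}\subset J_0(\mathfrak{P})\subset A$ together with $\mathbb{K}(K_{\mathfrak{P}})^{**}=\cB(K_{\mathfrak{P}})$; your easy inequality $P_{\pi_{\mathfrak{P}}}\leq P_{J_0(\mathfrak{P})}-P_{\mathfrak{P}}$ is not isolated as a separate step in the paper at all. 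The identification $\Ss{(J_0(\mathfrak{P})/\mathfrak{P})}=\ssA(P_{J_0(\mathfrak{P})}-P_{\mathfrak{P}})$ that you flag as the main obstacle is standard and is used without comment elsewhere in the paper (for instance in the proof of Proposition~\ref{p:almost-proper}), so it does not constitute a genuine gap.
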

\begin{proof}
	Suppose that an $A$-representation $H$ satisfies $\tilde\rho_H(P_{J_0(\mathfrak{P})} - P_{\mathfrak{P}}) = 1_H$.
	It is equivalent to $\tilde\rho_H(P_{J_0(\mathfrak{P})}) = 1_H$ and $\tilde\rho_H(P_{\mathfrak{P}}) = 0$.
	The first equality implies that $J_0(\mathfrak{P})$ acts non-degenerately on $H$ while the second equality implies that $\mathfrak{P}\in \ker \rho_H$.

	Consequently, $J_0(\mathfrak{P})/\mathfrak{P}\cong \mathbb{K}(K_{\mathfrak{P}})$ acts non-degenerately on $H$.
	By the representation theory of the C*-algebra of compact operators, $H$ is isomorphic to a direct sum of copies of $K_{\mathfrak{P}}$ and $\tilde\rho_H(\ssA) = \rho_H(A)'' \cong \pi_{\mathfrak{P}}(A)'' = \tilde\pi_{\mathfrak{P}}(\ssA)$.
	Therefore, $P_{J_0(\mathfrak{P})} - P_{\mathfrak{P}}$ equals to the support projection of $\pi_{\mathfrak{P}}$.
\end{proof}

\begin{proposition}\label{p:point-sp-typeI}
	The atomic spectrum of a correspondence $X$ over a Type I C*-algebra $A$ has the form
	\[
		\sigma_a(X) = \{ \mathfrak{P} \in \operatorname{Prim}A\colon \varphi(\mathfrak{P})X \neq \varphi(J_0(\mathfrak{P}))X \}.
	\]
	If $A = C_0(Y)$ is abelian, then the formula above simplifies to
	\[
		\sigma_a(X) = \{ y\in Y\colon \varphi(C_0(Y \setminus\{y\}))X \neq X\}.
	\]
\end{proposition}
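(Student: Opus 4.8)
The plan is to unwind the definition of the atomic spectrum using the support projection formula from the preceding lemma. By Definition~\ref{d:atomic}, we have $\mathfrak{P}\in \sigma_a(X)$ if and only if $P_{\pi_{\mathfrak{P}}}\leq P_X$, and by the lemma just proved $P_{\pi_{\mathfrak{P}}} = P_{J_0(\mathfrak{P})} - P_{\mathfrak{P}}$. So the first step is to express the condition $P_{J_0(\mathfrak{P})} - P_{\mathfrak{P}} \leq P_X$ spatially. Multiplying on the right by $\ssX$ and using Lemma~\ref{l:ssPI}, which identifies $\tilde\varphi(P_I)\ssX$ with $\Ss{(\varphi(I)X)}$, I would translate the projection inequality into an inclusion of w*-closed submodules of $\ssX$. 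The key observation is that $P_{J_0(\mathfrak{P})} - P_{\mathfrak{P}}$ and $P_X$ are not in general comparable, so I expect to first establish the general algebraic fact that $P_{J_0(\mathfrak{P})}-P_{\mathfrak{P}}\le P_X$ is equivalent to $\tilde\varphi(P_{J_0(\mathfrak{P})} - P_{\mathfrak{P}})\ssX \subset \tilde\varphi(P_X)\ssX$, and since $P_X$ acts as the identity on $\ssX$ the right-hand side is all of $\ssX$.

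The crux is therefore to show that $\tilde\varphi(P_{J_0(\mathfrak{P})})\ssX = \tilde\varphi(P_{\mathfrak{P}})\ssX$ precisely characterizes the \emph{complement} of the atomic spectrum. Concretely, I would argue that $\mathfrak{P}\notin\sigma_a(X)$ if and only if $\tilde\varphi(P_{J_0(\mathfrak{P})} - P_{\mathfrak{P}})\ssX$ is a nonzero submodule on which $P_X$ fails to act as the identity, and then use that $P_X$ is the support of $\varphi_X$ to reduce this to the honest spatial identity at the level of $X$. Applying Lemma~\ref{l:ssPI} with $I = J_0(\mathfrak{P})$ and $I = \mathfrak{P}$ gives $\tilde\varphi(P_{J_0(\mathfrak{P})})\ssX = \Ss{(\varphi(J_0(\mathfrak{P}))X)}$ and $\tilde\varphi(P_{\mathfrak{P}})\ssX = \Ss{(\varphi(\mathfrak{P})X)}$. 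Taking w*-closures commutes with the passage to the second dual, so the equality of these W*-submodules is equivalent to the equality of the norm-closed submodules $\overline{\varphi(J_0(\mathfrak{P}))X}$ and $\overline{\varphi(\mathfrak{P})X}$ back in $X$, which is exactly the condition $\varphi(\mathfrak{P})X = \varphi(J_0(\mathfrak{P}))X$ in the statement.

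The main obstacle I anticipate is justifying that the projection inequality $P_{\pi_{\mathfrak{P}}}\le P_X$ is genuinely equivalent to the spatial equality $\varphi(\mathfrak{P})X = \varphi(J_0(\mathfrak{P}))X$ rather than merely to an inclusion. The delicate point is that $P_{\mathfrak{P}} \le P_{J_0(\mathfrak{P})}$ always holds (since $\mathfrak{P}\subset J_0(\mathfrak{P})$), so $P_{\pi_{\mathfrak{P}}}$ is a genuine subprojection difference, and one must check that $P_X$ dominates it exactly when the two spatial ranges agree. Here I would lean on the fact that $P_X$ acts as the identity on the full submodule $\varphi(J_0(\mathfrak{P}))X$ whenever it contains $\varphi(\mathfrak{P})X$ as a proper subspace complemented by something on which $\varphi$ is faithful modulo $\mathfrak{P}$; a clean way to see this is that $\mathfrak{P}\in\sigma_a(X)$ forces the irreducible summand $K_{\mathfrak{P}}$ to embed into $X$ as a left submodule, which happens iff $\varphi$ distinguishes $J_0(\mathfrak{P})$ from $\mathfrak{P}$ on $X$.

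Finally, for the abelian case $A = C_0(Y)$, every point $y\in Y$ is a maximal ideal, so $J_0(\mathfrak{P}) = A$ as noted before the lemma, and $\mathfrak{P}$ corresponds to $\mathfrak{m}_y = C_0(Y\setminus\{y\})$. Substituting $J_0(\mathfrak{P}) = A$ into the general formula turns the condition $\varphi(\mathfrak{P})X\ne \varphi(J_0(\mathfrak{P}))X$ into $\varphi(C_0(Y\setminus\{y\}))X \ne \varphi(A)X = X$, using non-degeneracy of the left action to identify $\varphi(A)X$ with $X$. This yields the stated simplification with essentially no further work.
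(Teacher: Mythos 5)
Your overall strategy is the paper's: reduce membership in $\sigma_a(X)$ to the projection inequality $P_{\pi_{\mathfrak{P}}}=P_{J_0(\mathfrak{P})}-P_{\mathfrak{P}}\le P_X$ and translate spatial conditions through Lemma~\ref{l:ssPI}. But the step you yourself identify as the crux is not actually carried out, and the ``general algebraic fact'' you propose to start from is false. You claim that $P_{J_0(\mathfrak{P})}-P_{\mathfrak{P}}\le P_X$ should be equivalent to $\tilde\varphi(P_{J_0(\mathfrak{P})}-P_{\mathfrak{P}})\ssX\subset\tilde\varphi(P_X)\ssX$; since, as you note, the right-hand side is all of $\ssX$, that inclusion holds for \emph{every} projection, while the inequality on the left does not. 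For a general central projection $Q$ the conditions ``$Q\le P_X$'' (i.e.\ $Q(1-P_X)=0$) and ``$\tilde\varphi(Q)\ne 0$'' (i.e.\ $QP_X\ne 0$) are genuinely different: $Q$ can have nonzero components both under $P_X$ and under $1-P_X$. Your later attempt to bridge this --- ``$K_{\mathfrak{P}}$ embeds into $X$ as a left submodule, which happens iff $\varphi$ distinguishes $J_0(\mathfrak{P})$ from $\mathfrak{P}$ on $X$'' --- is circular, since that final ``iff'' is precisely the equivalence to be proved, and no mechanism is supplied.

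The missing idea is minimality. Because $\pi_{\mathfrak{P}}$ is irreducible, $\ssA P_{\pi_{\mathfrak{P}}}\cong\pi_{\mathfrak{P}}(A)''=\cB(K_{\mathfrak{P}})$ is a factor, so $P_{\pi_{\mathfrak{P}}}$ is a \emph{minimal} projection in $Z(\ssA)$. Hence the central subprojection $P_XP_{\pi_{\mathfrak{P}}}\le P_{\pi_{\mathfrak{P}}}$ is either $0$ or equal to $P_{\pi_{\mathfrak{P}}}$; the first alternative is exactly $\tilde\varphi(P_{\pi_{\mathfrak{P}}})=0$ (as $\ker\tilde\varphi=\ssA(1-P_X)$), and the second is exactly $P_{\pi_{\mathfrak{P}}}\le P_X$. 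This single observation closes the gap and is how the paper argues. The rest of your outline --- using Lemma~\ref{l:ssPI} to pass between $\varphi(I)X$ and $\tilde\varphi(P_I)\ssX$ for $I=\mathfrak{P}, J_0(\mathfrak{P})$, noting that $\tilde\varphi(P_{J_0(\mathfrak{P})})\ssX\ne\tilde\varphi(P_{\mathfrak{P}})\ssX$ iff $\tilde\varphi(P_{\pi_{\mathfrak{P}}})\ne0$, and the abelian specialization via $J_0(\mathfrak{P})=A$ together with non-degeneracy of the left action --- is sound and matches the paper.
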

\begin{proof}

	By Lemma~\ref{l:ssPI}, we have $\varphi(\mathfrak{P})X \neq \varphi(J_0(\mathfrak{P}))X$ if and only if $\tilde\varphi(P_{\mathfrak{P}})\ssX \neq \tilde\varphi(P_{J_0(\mathfrak{P})})\ssX$.
	The latter inequality holds if and only if $\tilde\varphi(P_{\pi_{\mathfrak{P}}}) = \tilde\varphi(P_{J_0(\mathfrak{P})} - P_{\mathfrak{P}}) \neq 0$.
	Therefore, we need to prove that $\tilde\varphi(P_{\pi_{\mathfrak{P}}}) \neq 0$ if and only if $P_{\pi_{\mathfrak{P}}} \leq P_X$.

	Suppose that $P_{\pi_{\mathfrak{P}}} \leq P_X$.
	We have $\ssA P_{\pi_{\mathfrak{P}}} \subset \ssA P_X$.
	Therefore, $\ssA P_{\pi_{\mathfrak{P}}}$ acts on $\ssX$ faithfully and $\tilde\varphi(P_{\pi_{\mathfrak{P}}}) \neq 0$.

	On the other hand, if $\tilde\varphi(P_{\pi_{\mathfrak{P}}}) \neq 0$, then $0\neq P_XP_{\pi_{\mathfrak{P}}} \leq P_{\pi_{\mathfrak{P}}}$.
	Since $P_{\pi_{\mathfrak{P}}}$ is the support projection of an irreducible representation, it is a minimal central projection.
	Therefore, we get $P_XP_{\pi_{\mathfrak{P}}} = P_{\pi_{\mathfrak{P}}} \leq P_X$.

	The formula for abelian C*-algebras is a special case of the general one.
	The simplification comes from the fact that all primitive ideals are maximal and of the form $C_0(Y\setminus\{y\})$.
\end{proof}

\section{Topological quivers}\label{s:quivers}
Here we apply the results of previous sections to topological quivers.
In our exposition we follow the initial reference \cite{MuhlyTomforde} with the reversed role of the maps $s$ and $r$.
We assume all topological spaces to be second countable, locally compact, and Hausdorff.

Let $E^0, E^1$ be two topological spaces together with continuous maps $s,r\colon E^1 \rightrightarrows E^0$ called source and range maps.
Let $\lambda = (\lambda_x)_{x\in E^0}$ be the collection of measures on $E^1$ such that $\operatorname{supp}\lambda_x = s^{-1}(x)$ for all $x\in E^0$.
We additionally require that the function
\[
	x\mapsto \int_{s^{-1}(x)}\xi(t)d\lambda_x(t)
\]
is in $C_c(E^0)$ for all $\xi\in C_c(E^1)$.
The tuple $(E^0, E^1, r, s, \lambda)$ satisfying the above conditions is called a \emph{topological quiver}.

We define a $C_0(E^0)$-valued inner product on $C_c(E^1)$ by
\[
	\langle \xi, \eta\rangle(x) = \int_{s^{-1}(x)}\overline{\xi(t)}\eta(t)d\lambda_x(t)
\]
for all $\xi,\eta\in C_c(E^1)$.
We denote the completion of $C_c(E^1)$ with respect to this inner product by $X = X(E)$ which is a Hilbert $C_0(E^0)$-module.
The left action by $f\in C_0(E^1)$ on $X$ is given by $(\varphi(f)\xi)(t) = f(r(t))\xi(t)$ for all $\xi\in C_c(E^1)$, $t\in E^1$, and extended to all $X$ by continuity.

Ideals of $C_0(E^0)$ are in bijection with open subsets $U\subset E^0$ via the assignment $U\mapsto C_0(U)\subset C_0(E^0)$.
The open subset corresponding to Katsura's ideal $\cI_{X}$ is denoted by $E^0_{\mathrm{reg}}$.
The points of $E^0_{\mathrm{reg}}$ are called \emph{regular vertices}.
There is a topological characterization of regular vertices in \cite[Proposition 3.15]{MuhlyTomforde}.

As in the previous section, we identify $\widehat{C_0(E^0)}$ with $E^0$.
\begin{theorem}\label{t:spectrum-quiver}
	Let $E = (E^0, E^1, r, s, \lambda)$ be a topological quiver and $X=X(E)$ be the associated correspondence.
	The atomic spectrum $\sigma_a(X)$ consists of points $x\in E^0$ with $\lambda_y(r^{-1}(x)) \neq 0$ for some $y\in E^0$.
\end{theorem}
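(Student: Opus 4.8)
The plan is to feed the statement into Proposition~\ref{p:point-sp-typeI}, which for the abelian algebra $A = C_0(E^0)$ identifies $\sigma_a(X)$ with the set of $x\in E^0$ for which $\varphi(C_0(E^0\setminus\{x\}))X \neq X$. Writing $U = E^0\setminus\{x\}$ and $I = C_0(U)$, the theorem thus reduces to the equivalence
\[
  \overline{\varphi(I)X} = X \iff \lambda_y(r^{-1}(x)) = 0 \text{ for all } y\in E^0.
\]
The central device is the fibre $X_y \coloneqq X\otimes_{C_0(E^0)}\C_y$, where $\C_y$ is $\C$ with $C_0(E^0)$ acting by evaluation at $y$. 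A direct computation with the inner product shows $X_y \cong L^2(s^{-1}(y),\lambda_y)$ via $\xi\otimes 1\mapsto \xi|_{s^{-1}(y)}$, under which $\varphi(f)$ becomes multiplication by $f\circ r$; the localization map $q_y\colon X\to X_y$, $\xi\mapsto\xi\otimes 1$, is a contraction with dense range.

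For the forward direction, suppose $\lambda_y(r^{-1}(x)) > 0$ for some $y$. Since $f(r(t)) = f(x) = 0$ whenever $f\in I$ and $t\in r^{-1}(x)$, the image $q_y(\varphi(I)X)$ consists of $L^2(s^{-1}(y),\lambda_y)$-classes vanishing on $r^{-1}(x)\cap s^{-1}(y)$; as $\lambda_y$ charges this set, $q_y(\varphi(I)X)$ is contained in a proper closed subspace. Were $\varphi(I)X$ dense in $X$, its image under the contraction $q_y$ would be dense in $q_y(X)$, hence in $X_y$, a contradiction. Thus $\varphi(I)X\neq X$ and $x\in\sigma_a(X)$.

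For the converse, assume $\lambda_y(r^{-1}(x)) = 0$ for all $y$; it suffices to approximate each $\xi\in C_c(E^1)$ by elements of $\varphi(I)X$. Using local compactness and second countability of $E^0$, I would choose compact neighborhoods $W_n$ of $x$ with $W_{n+1}\subset\operatorname{int}W_n$ and $\bigcap_n W_n = \{x\}$, and Urysohn functions $f_n\in C_c(U)$ with $0\le f_n\le 1$ vanishing on $W_{n+1}$ and equal to $1$ on $r(K)\setminus\operatorname{int}W_n$, where $K = \operatorname{supp}\xi$. Then $\varphi(f_n)\xi = (f_n\circ r)\xi$, the bad set $\{t\in K : f_n(r(t))\neq 1\}$ lies in the compact set $C_n \coloneqq r^{-1}(W_n)\cap K$, and $C_n\downarrow r^{-1}(x)\cap K$, giving
\[
  \|\varphi(f_n)\xi - \xi\|^2 = \sup_y\int_{s^{-1}(y)}|f_n(r(t))-1|^2|\xi(t)|^2\,d\lambda_y(t) \le \|\xi\|_\infty^2\,\sup_y\lambda_y(C_n).
\]

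The crux, and the step I expect to be the main obstacle, is upgrading the pointwise vanishing $\lambda_y(r^{-1}(x)\cap K) = 0$ to the uniform bound $\sup_y\lambda_y(C_n)\to 0$. Here I would exploit the defining continuity of the quiver, namely that $y\mapsto\int\xi\,d\lambda_y$ lies in $C_c(E^0)$: by outer regularity this makes $y\mapsto\lambda_y(C)$ an infimum of such continuous functions, hence upper semicontinuous with compact support for every compact $C$. If $\sup_y\lambda_y(C_n)$ did not tend to $0$, I would pick $y_n$ with $\lambda_{y_n}(C_n)$ bounded below; since all $y_n$ lie in the compact set $s(C_1)$, a convergent subnet $y_{n_k}\to y_*$ combined with upper semicontinuity of $y\mapsto\lambda_y(C_N)$ would force $\lambda_{y_*}(C_N)$ bounded below for every fixed $N$, whence $\lambda_{y_*}(r^{-1}(x)\cap K) = \lim_N\lambda_{y_*}(C_N) > 0$, contradicting the hypothesis. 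This yields $\varphi(f_n)\xi\to\xi$, establishing density and completing the theorem.
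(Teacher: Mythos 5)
Your proof is correct and follows the paper's overall strategy: both arguments reduce the theorem to Proposition~\ref{p:point-sp-typeI} and then decide for which $x$ the ideal $C_0(E^0\setminus\{x\})$ acts degenerately on $X(E)$. The execution of the density direction is where you genuinely diverge. The paper takes an increasing approximate identity $\{e_i\}$ of $C_0(E^0\setminus\{x\})$, notes that $e_i\circ r\to 1$ $\lambda_y$-a.e.\ for each fixed $y$, and concludes from the pointwise (in $y$) convergence $\langle\varphi(e_i)\xi,\eta\rangle(y)\to\langle\xi,\eta\rangle(y)$ that $\varphi(C_0(E^0\setminus\{x\}))X=X$; the upgrade from pointwise to norm convergence, which requires uniformity over $y$ (e.g.\ via Dini's theorem applied to the increasing continuous, compactly supported functions $y\mapsto\langle\varphi(e_i)\xi,\varphi(e_i)\xi\rangle(y)$), is left implicit. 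You identify this uniformity as the crux and handle it differently: explicit Urysohn cut-offs reduce everything to showing $\sup_y\lambda_y(C_n)\to 0$, which you obtain from upper semicontinuity of $y\mapsto\lambda_y(C)$ (an infimum of the continuous functions supplied by the quiver axiom, by outer regularity) together with compactness of $s(C_1)$ and continuity of $\lambda_{y_*}$ from above. This is longer but quantitative, and it supplies the step the paper glosses over. For the converse direction, your localization $X_y\cong L^2(s^{-1}(y),\lambda_y)$ via $X\otimes_{C_0(E^0)}\C_y$ is a precise rendering of the paper's one-line containment in the proper subspace of sections vanishing on $r^{-1}(x)$. I see no gaps.
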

\begin{proof}
	We will use the description of the atomic spectrum from Proposition~\ref{p:point-sp-typeI}.
	Suppose that $\lambda_y(r^{-1}(x)) = 0$ for all $y\in E^0$.
	Let $\{e_i\}_{i\in \N}$ be an increasing approximate identity of $C_0(E^0\setminus\{x\})$.
	Then, for every $\xi,\eta\in C_c(E^1)$ and $y\in E^0$, we have
	\begin{multline*}
		\lim_{i\to \infty} \langle \varphi(e_i)\xi, \eta\rangle (y) = \int_{s^{-1}(y)} e_i(r(t))\overline{\xi(t)}\eta(t)d\lambda_y(t) \\= \int_{s^{-1}(y)} \overline{\xi(t)}\eta(t)d\lambda_y(t) = \langle \xi, \eta\rangle(y)
	\end{multline*}
	since $e_i\circ r \xrightarrow{\lambda_y\text{-a.e.}} 1$.
	Therefore, $\varphi(C_0(E^0\setminus\{x\}))X = X$ and $x\notin \sigma_a(X)$.

	Conversely, suppose that $\lambda_y(r^{-1}(x)) > 0$ for some $y\in E^0$.
	Then, we have $\varphi(C_0(E^0\setminus\{x\}))X\subset \overline{\{\xi\in C_0(E^0) \colon \xi|_{r^{-1}(x)} = 0\}} \subsetneq X$.
	We conclude that $x\in \sigma_a(X)$.
\end{proof}

A topological quiver is called a topological graph if the map $s$ is a local homeomorphism.
This implies that $s^{-1}(x) = \operatorname{supp} \lambda_x$ is discrete for all $x\in E^0$.
\begin{corollary}\label{cor:conjecture-quiver}
	A topological quiver violates the hyperrigidity conjecture if and only if $\lambda_y(r^{-1}(x))=0$ for all $x\in E^0\setminus E^0_{\mathrm{reg}}$ and $y\in E^0$ while $\lambda_y(r^{-1}(E^0\setminus E^0_{\mathrm{reg}})) \neq 0$ for some $y\in E^0$.
	In particular, correspondences of topological graphs satisfy the hyperrigidity conjecture.
\end{corollary}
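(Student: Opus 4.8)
The plan is to feed both clauses of the violation criterion from Theorem~\ref{t:choquet} through the description of the atomic spectrum in Theorem~\ref{t:spectrum-quiver}. Recall that $X$ violates the hyperrigidity conjecture exactly when $\sigma_a(X) = \widehat{\cI}_X$ while $\varphi(\cI_X)X \neq X$. Since Katsura's ideal is $\cI_X = C_0(E^0_{\mathrm{reg}})$, its spectrum $\widehat{\cI}_X$ is identified with the open set $E^0_{\mathrm{reg}} \subset E^0 = \widehat{C_0(E^0)}$, and I would translate each clause into a measure-theoretic statement about $\lambda$.

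For the equality $\sigma_a(X) = \widehat{\cI}_X$, I would use the general inclusion $\widehat{\cI}_X \subset \sigma_a(X)$ recorded after Definition~\ref{d:atomic}. This inclusion makes the equality equivalent to $\sigma_a(X) \subset E^0_{\mathrm{reg}}$, that is, to the statement that no point outside $E^0_{\mathrm{reg}}$ lies in the atomic spectrum. By the formula of Theorem~\ref{t:spectrum-quiver} this is precisely the condition that $\lambda_y(r^{-1}(x)) = 0$ for every $x \in E^0 \setminus E^0_{\mathrm{reg}}$ and every $y \in E^0$, which is the first stated condition.

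For the inequality $\varphi(\cI_X)X \neq X$, I would re-run the approximate-identity computation from the proof of Theorem~\ref{t:spectrum-quiver}, now with $\cI_X = C_0(E^0_{\mathrm{reg}})$ in place of $C_0(E^0 \setminus \{x\})$. Taking an increasing approximate identity $\{e_i\}$ of $C_0(E^0_{\mathrm{reg}})$, one has $e_i(r(t)) = 0$ whenever $r(t) \notin E^0_{\mathrm{reg}}$, while $e_i \circ r \to 1$ pointwise on $r^{-1}(E^0_{\mathrm{reg}})$. Estimating $\langle \varphi(e_i)\xi - \xi, \varphi(e_i)\xi - \xi\rangle(y) = \int_{s^{-1}(y)} |e_i(r(t)) - 1|^2\, |\xi(t)|^2\, d\lambda_y(t)$ exactly as there, one finds $\varphi(e_i)\xi \to \xi$ for all $\xi$ if and only if $r^{-1}(E^0 \setminus E^0_{\mathrm{reg}})$ is $\lambda_y$-null for every $y$; thus $\varphi(\cI_X)X = X$ holds under exactly that nullity, and $\varphi(\cI_X)X \neq X$ is equivalent to $\lambda_y(r^{-1}(E^0 \setminus E^0_{\mathrm{reg}})) \neq 0$ for some $y$, the second stated condition. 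Combined with the previous paragraph this yields the claimed equivalence; I expect the measure-theoretic bookkeeping here, tracking that non-degeneracy of the ideal action is faithfully detected by nullity of the receiving set $r^{-1}(E^0 \setminus E^0_{\mathrm{reg}})$, to be the only genuinely delicate step.

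Finally, for the assertion about topological graphs I would specialise to the case where $s$ is a local homeomorphism, so each $\lambda_x$ is atomic and assigns strictly positive mass to every point of the discrete fibre $s^{-1}(x)$. Consequently $\lambda_y(r^{-1}(x)) = 0$ forces $s^{-1}(y) \cap r^{-1}(x) = \emptyset$, and taking unions over all $y$ and all non-regular $x$ turns the first condition into $r^{-1}(E^0 \setminus E^0_{\mathrm{reg}}) = \emptyset$. But the second condition demands $\lambda_y(r^{-1}(E^0 \setminus E^0_{\mathrm{reg}})) \neq 0$ for some $y$, hence that this same set be non-empty. The two conditions are therefore mutually exclusive, so no topological graph can violate the conjecture, and the hyperrigidity conjecture holds for all graph correspondences.
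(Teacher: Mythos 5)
Your proposal is correct and follows essentially the same route as the paper: both conditions are translated through Theorem~\ref{t:spectrum-quiver} and Kim's criterion (Corollary~\ref{cor:kim}), and the graph case is settled by the discreteness of each $\lambda_y$ on $s^{-1}(y)$, which makes the two conditions mutually exclusive. The only cosmetic difference is that you re-run the approximate-identity computation for $\varphi(\cI_X)X=X$ and phrase the graph argument via emptiness of the fibres rather than summing the zero masses, but these are the same observations.
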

\begin{proof}
	By definition, we have $\widehat{\cI}_{X(E)} = E^0_{\mathrm{reg}}$.
	By Theorem~\ref{t:choquet} and Theorem~\ref{t:spectrum-quiver}, we have $\partial_C(X) = \widehat{\cO}_{X}$ if and only if $\sigma_a(X) = \widehat{\cI}_X$ if and only if $\lambda_y(r^{-1}(x))=0$ for all $x\in E^0\setminus E^0_{\mathrm{reg}}$.
	On the other hand, we have $\varphi(\cI_X)X = X$ if and only if all the measures $\lambda_x$ are concentrated on $r^{-1}(E^0_{\mathrm{reg}})$.
	By Kim's Theorem (Corollary~\ref{cor:kim}), it follows that $X$ is hyperrigid if and only if $\lambda_y(r^{-1}(E^0\setminus E^0_{\mathrm{reg}})) = 0$ for all $y\in E^0$.

	When $E$ is a topological quiver, the measure $\lambda_x$ has the discrete support $s^{-1}(x)$ and thus it is a discrete measure for all $x\in X$.
	Assume that $\lambda_y(r^{-1}(x))=0$ for all $x\in E^0\setminus E^0_{\mathrm{reg}}$.
	We have $\lambda_y(r^{-1}(E^0\setminus E^0_{\mathrm{reg}})) = \sum_{x\in E^0\setminus E^0_{\mathrm{reg}}} \lambda_y(r^{-1}(x)) = 0$ for all $y\in E^0$.
	Therefore, the two conditions for the violation of the hyperrigidity conjecture cannot both hold for topological graphs.
\end{proof}

\begin{example}
	Let $Y$ be a connected topological space which is not a single point and $\mu$ be a probability measure on $Y$ with full support.
	Then, we may define a topological quiver with $E^0 = Y\sqcup \{\bullet\}$, $E^1 = Y$, $s(Y) = \bullet$, $r = \operatorname{id}_Y$, and $\lambda_\bullet = \mu$.
	Since $s$ is not a homeomorphism on any open set, $E^0_{\mathrm{reg}} = \emptyset$.
	The atomic spectrum of $E$ is exactly the set of atoms of $\mu$.
	Therefore, if $\mu$ is atomless, then $E$ violates the hyperrigidity conjecture.
	One may check manually that the representation induced by $L^2(Y, \mu)$ is not maximal.
\end{example}

Let $Y$ be a topological space.
We identify $C_0(Y)^*$ with the set of bounded complex Radon measures on $Y$.
Let $B(Y)$ be the $*$-algebra of bounded Borel functions on $Y$.
Then, integration defines an injective $*$-homomorphism $B(Y) \hookrightarrow C_0(Y)^{**}$ and we identify $B(Y)$ with its image.
In particular, if $S\subset Y$ is a Borel subset, then the characteristic function $\mathbf{1}_S$ is a projection in the W*-algebra $C_0(Y)^{**}$.

For a representation $\rho_H\colon C_0(Y) \to \cB(H)$, we say that the projection $\tilde\rho_H(\mathbf{1}_S)$ is the \emph{spectral projection} corresponding to $S$.
If $Y = \sigma(T)$ for a normal operator $T\in \cB(H)$ and the representation $\rho_H$ is the tautological one, then the notion of spectral projection coincides with the classical one for normal operators.

If $Z$ is another topological space, then by Gelfand--Naimark duality, a $*$-homomorphism $\psi\colon C_0(Y)\to \mathcal{M}(C_0(Z)) = C_b(Z)$ is equivalent to a continuous map $\check{\psi}\colon Z\to Y$.
The dual of $\psi$ restricted to ${C_0(Z)}^*$ acts as the pushforward of measures $\check{\psi}_*\colon {C_0(Z)}^* \to {C_0(Y)}^*$.
Finally, the dual of $\check{\psi}_*$ is the normal homomorphism $\tilde\psi\colon {C_0(Y)}^{**}\to {C_0(Z)}^{**}$.
If $f \in B(Y)\subset {C_0(Y)}^{**}$, then $\tilde\psi(f) = f\circ\psi \in B(Z)$.
\begin{lemma}\label{l:supp-im}
	The support projection of $\psi\colon C_0(Y) \to \mathcal{M}(C_0(Z))$ is the characteristic function $\mathbf{1}_{\check{\psi}(Z)}\in B(Y)\subset C_0(Y)^{**}$ of the Borel set $\check{\psi}(Z)\subset Y$.
\end{lemma}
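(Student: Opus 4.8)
The plan is to prove the stated equality $P_\psi = \mathbf{1}_{\check\psi(Z)}$ directly inside the abelian von Neumann algebra $C_0(Y)^{**}$, whose predual is $C_0(Y)^*$, the space of finite complex Radon measures on $Y$. First I would record two preliminaries. Since $Z$ is second countable, locally compact, and Hausdorff, it is $\sigma$-compact, so $\check\psi(Z)$ is a countable union of compacta, hence an $F_\sigma$ and in particular Borel; thus $\mathbf{1}_{\check\psi(Z)}$ is a genuine projection in $B(Y)\subset C_0(Y)^{**}$. Moreover, because $\tilde\psi$ is the adjoint of the pushforward $\check\psi_*\colon C_0(Z)^*\to C_0(Y)^*$, we have $\langle\tilde\psi(F),\nu\rangle = \langle F,\check\psi_*\nu\rangle$ for all $F\in C_0(Y)^{**}$ and $\nu\in C_0(Z)^*$. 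By definition $P_\psi = P_{\tilde\psi}$ is the central projection with $\ker\tilde\psi = C_0(Y)^{**}(1-P_{\tilde\psi})$, so it is enough to establish the two inequalities $P_{\tilde\psi}\leq\mathbf{1}_{\check\psi(Z)}$ and $\mathbf{1}_{\check\psi(Z)}\leq P_{\tilde\psi}$.

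The first inequality is formal and follows from the identity $\tilde\psi(f) = f\circ\check\psi$ for $f\in B(Y)$ recorded just before the lemma. Applying it to $f = \mathbf{1}_{Y\setminus\check\psi(Z)}$ yields $\tilde\psi(1-\mathbf{1}_{\check\psi(Z)}) = \mathbf{1}_{\check\psi^{-1}(Y\setminus\check\psi(Z))} = \mathbf{1}_{\emptyset} = 0$, since no point of $Z$ is mapped outside $\check\psi(Z)$. Hence $1-\mathbf{1}_{\check\psi(Z)}\in\ker\tilde\psi = C_0(Y)^{**}(1-P_{\tilde\psi})$, which forces $P_{\tilde\psi}\leq\mathbf{1}_{\check\psi(Z)}$.

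For the reverse inequality I would set $r = \mathbf{1}_{\check\psi(Z)} - P_{\tilde\psi}$, a projection satisfying $r\leq\mathbf{1}_{\check\psi(Z)}$ and $r\in\ker\tilde\psi$, and show $r = 0$. If $r\neq 0$, pick a positive normal functional, i.e. a positive measure $\mu\in C_0(Y)^*$, with $\langle r,\mu\rangle > 0$; replacing $\mu$ by $\mathbf{1}_{\check\psi(Z)}\mu$ leaves this pairing unchanged because $r = r\,\mathbf{1}_{\check\psi(Z)}$, so we may assume $\mu$ is concentrated on $\check\psi(Z)$. The crucial step is to realize $\mu$ as a pushforward: since $\check\psi\colon Z\to\check\psi(Z)$ is a continuous surjection of Polish spaces, a measurable selection theorem (Jankov--von Neumann) provides a universally measurable section $\sigma\colon\check\psi(Z)\to Z$ with $\check\psi\circ\sigma = \mathrm{id}$, and then $\nu = \sigma_*\mu$ lies in $C_0(Z)^*$ and satisfies $\check\psi_*\nu = (\check\psi\circ\sigma)_*\mu = \mu$. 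Consequently $\langle\tilde\psi(r),\nu\rangle = \langle r,\check\psi_*\nu\rangle = \langle r,\mu\rangle > 0$, contradicting $r\in\ker\tilde\psi$. Thus $r = 0$ and $P_\psi = P_{\tilde\psi} = \mathbf{1}_{\check\psi(Z)}$.

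I expect the last paragraph to hold the only genuine difficulty. The set-up and the upper bound are essentially bookkeeping, whereas the lower bound rests on the fact that every finite measure concentrated on $\check\psi(Z)$ is a pushforward of a measure on $Z$ — precisely the point where measurable selection must be invoked. The subtlety is that $C_0(Y)^{**}$ is far larger than $B(Y)$, so controlling an \emph{arbitrary} projection $r$ rather than a Borel characteristic function requires detecting $r$ by measures (hence by continuous, not merely atomic, mass on $\check\psi(Z)$); this is what rules out a cheaper argument using point masses alone.
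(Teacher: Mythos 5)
Your proof is correct and follows essentially the same route as the paper's: the inequality $P_\psi\leq\mathbf{1}_{\check\psi(Z)}$ is the easy formal step, and the reverse inequality is obtained by lifting a measure concentrated on $\check\psi(Z)$ to a measure on $Z$ and pairing against an element of $\ker\tilde\psi$. The only difference is the source of the lifting: the paper cites a pullback-of-measures lemma of Varadarajan, whereas you derive it from the Jankov--von Neumann measurable selection theorem; both are legitimate and essentially equivalent inputs here, since $Z$ is Polish and $\check\psi(Z)$ is analytic.
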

\begin{proof}
	Denote $C = \check{\psi}(Z)$.
	We have $\tilde \psi(\mathbf{1}_{C}) = \psi\circ \mathbf{1}_{C} = 1$ so that $\mathbf{1}_{C} \geq P_{\psi}$.
	Suppose that $\mathbf{1}_{C} > P_{\psi}$.
	Hence, there exists a probability measure $\mu\in {C(Y)}^*$ such that $\mathbf{1}_{C}(\mu) = \mu(C) > P_{\psi}(\mu)$.
	By replacing $\mu$ with $\frac{1}{\mu(C)}\mu|_{C}$ we may assume that $\mu(C) = 1$ and $\mu(Y\setminus C)=1$.
	Since we assume $Y$ to be second countable LCH and thus separable and metrizable, we may apply \cite[Lemma 2.2]{VaradPullback} to find a probability measure $\nu$ on $Z$ such that $\mu = \check{\psi}_*\nu$.
	We arrive at a contradiction by noting that
	\[
		1 > P_\psi(\mu) = P_\psi(\check \psi_*\nu) = \tilde \psi(P_\psi)(\nu) = 1.
	\]
	We conclude that $\mathbf{1}_{C} = P_{\psi}$.
\end{proof}

\begin{theorem}\label{t:topological-graphs}
	A representation $(H, t)$ of the correspondence $X=X(E)$ of a topological graph $E = (E^0, E^1, r, s, \lambda)$ is fully Cuntz-Pimsner covariant if and only if $\overline{t(X)H} = H_{r(E^1)}$, where $H_{r(E^1)}$ is the range of the spectral projection on $H$ corresponding to the Borel subset $r(E^1)\subset E^0$.
\end{theorem}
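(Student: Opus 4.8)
Here is how I would prove Theorem~\ref{t:topological-graphs}.

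The plan is to reduce the statement to the single identity $P_X = \mathbf{1}_{r(E^1)}$ of central projections in $\Ss{C_0(E^0)}$, and then to compute $P_X$ by factoring the left action through a pullback map to which Lemma~\ref{l:supp-im} applies. By Theorem~\ref{t:criterion} together with the remarks following Definition~\ref{d:p-relative}, the representation $(H,t)$ is fully Cuntz-Pimsner covariant if and only if $\overline{t(X)H} = \tilde\rho_H(P_X)H$. On the other hand, $H_{r(E^1)} = \tilde\rho_H(\mathbf{1}_{r(E^1)})H$ is by definition the range of the spectral projection associated with the Borel set $r(E^1)$. Hence the theorem is equivalent to showing that the support projection $P_X = P_{\varphi_X}$ equals $\mathbf{1}_{r(E^1)}$, and the whole argument takes place intrinsically in $\Ss{C_0(E^0)}$.

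To set up the factorization, I would observe that the left action $(\varphi_X(f)\xi)(t) = f(r(t))\xi(t)$ factors as $\varphi_X = m\circ r^*$, where $r^*\colon C_0(E^0)\to C_b(E^1) = \mathcal{M}(C_0(E^1))$ is the pullback $*$-homomorphism $f\mapsto f\circ r$ and $m\colon C_b(E^1)\to \cL(X)$ is the multiplication action. Since $E^1$ is second countable and locally compact it is $\sigma$-compact, so $r(E^1)$ is an $F_\sigma$, hence Borel, subset of $E^0$. The continuous map dual to $r^*$ in the sense of Lemma~\ref{l:supp-im} is exactly $r$, so that lemma yields $P_{r^*} = \mathbf{1}_{r(E^1)}$.

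It then remains to compare $P_{\varphi_X}$ with $P_{r^*}$. Passing to normal extensions, $\tilde\varphi_X = \tilde m\circ \widetilde{r^*}$, whence $\ker\widetilde{r^*}\subseteq \ker\tilde\varphi_X$ and therefore $P_{\varphi_X}\leq P_{r^*} = \mathbf{1}_{r(E^1)}$. For the reverse inequality I would compute $\ker\tilde\varphi_X$ directly: for a bounded Borel function $g$ on $E^0$ the operator $\tilde\varphi_X(g)$ acts on $\ssX$ by multiplication by $g\circ r$, and unwinding the inner product shows that $\tilde\varphi_X(g) = 0$ if and only if $g\circ r = 0$ $\lambda_x$-almost everywhere for every $x\in E^0$. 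Here the topological-graph hypothesis enters decisively: since $s$ is a local homeomorphism, $\operatorname{supp}\lambda_x = s^{-1}(x)$ is discrete, so $\lambda_x$ is purely atomic with positive mass at each of its points, and $\bigcup_{x} s^{-1}(x) = E^1$. Consequently the family of a.e.-vanishing conditions upgrades to $g\circ r = 0$ at every point of $E^1$, i.e. $g|_{r(E^1)} = 0$, i.e. $g\,\mathbf{1}_{r(E^1)} = 0$. Thus $\ker\tilde\varphi_X = \Ss{C_0(E^0)}\,\mathbf{1}_{E^0\setminus r(E^1)} = \ker\widetilde{r^*}$, giving $P_X = \mathbf{1}_{r(E^1)}$ and completing the proof.

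I expect the one genuinely nontrivial step to be the passage from ``$g\circ r$ vanishes $\lambda_x$-a.e. for all $x$'' to ``$g\circ r$ vanishes identically on $E^1$''; this is precisely where atomicity of the measures---equivalently, $s$ being a local homeomorphism rather than merely continuous---is indispensable. It is also what distinguishes $r(E^1)$ from its closure: inside $C_0(E^0)$ continuity forces $\ker\varphi_X$ to see only the closed set $\overline{r(E^1)}$, whereas in the bidual the Borel projection $\mathbf{1}_{r(E^1)}$ resolves the image set itself. For a general topological quiver with diffuse $\lambda_x$ the same computation would instead produce a $\lambda$-essential image, so the clean description in terms of $r(E^1)$ is special to graphs, which is why the theorem is stated in that generality.
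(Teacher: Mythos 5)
Your reduction of the theorem to the single identity $P_X = \mathbf{1}_{r(E^1)}$, the factorization $\varphi_X = m\circ r^*$, and the inequality $P_X\leq P_{r^*} = \mathbf{1}_{r(E^1)}$ via Lemma~\ref{l:supp-im} are all correct and match what is needed. But your proof of the reverse inequality has a genuine gap, precisely at the step you single out as the crux. You compute $\ker\tilde\varphi_X$ only on the C*-subalgebra $B(E^0)\subset \Ss{C_0(E^0)}$ of bounded Borel functions and then conclude $\ker\tilde\varphi_X = \Ss{C_0(E^0)}\,\mathbf{1}_{E^0\setminus r(E^1)}$. This does not follow: $\ker\tilde\varphi_X = \Ss{C_0(E^0)}(1-P_X)$ for a projection $1-P_X$ that need not be of the form $\mathbf{1}_S$, and $\Ss{C_0(E^0)}$ contains many nonzero projections dominated by $\mathbf{1}_{r(E^1)}$ that dominate no nonzero $\mathbf{1}_S$ --- for instance the support projection of a diffuse measure concentrated on $r(E^1)$, which in the decomposition $\Ss{C_0(E^0)}\cong \bigoplus_\mu L^\infty(\mu)$ over mutually singular measures is orthogonal to every $\mathbf{1}_{\{y\}}$ and hence dominates no characteristic function of a nonempty Borel set. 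Your computation is therefore consistent with $1-P_X$ exceeding $\mathbf{1}_{E^0\setminus r(E^1)}$ by such a diffuse piece, and atomicity of the $\lambda_x$ as you deploy it (testing against the point masses $\delta_{r(t)}$) cannot detect these projections.

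The gap is repairable, but not by pointwise evaluation alone. One route close to your idea: show that every positive measure $\sigma$ on $E^1$ satisfies $\sigma\ll \lambda\circ s_*\sigma$, because each $\lambda_x$ dominates $c_t\delta_t$ with $c_t = \lambda_x(\{t\})>0$ for every $t\in s^{-1}(x)$ (this is where the local homeomorphism hypothesis enters); hence the normal functionals $\mu\mapsto \mu(\langle \xi, \tilde\varphi_X(\cdot)\xi\rangle)$ witness the support projection of every pushforward $r_*\sigma$, and the Varadarajan pullback used in Lemma~\ref{l:supp-im} shows these exhaust all measures concentrated on $r(E^1)$, forcing $P_X\geq \mathbf{1}_{r(E^1)}$. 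The paper sidesteps the issue by exploiting the local homeomorphism hypothesis differently: by \cite[Theorem 3.11]{MuhlyTomforde}, pointwise multiplication gives an injective $*$-homomorphism $\iota\colon C_0(E^1)\hookrightarrow \cK(X)$ landing in the compacts rather than merely the multipliers, so $\tilde\iota$ is the bidual of an isometry and is injective on all of $\Ss{C_0(E^1)}$; then $\ker\tilde\varphi_X = \ker\widetilde{r^*}$ with no kernel computation at all, and Lemma~\ref{l:supp-im} finishes. You should either adopt that argument or supply the absolute-continuity step above; as written, the passage from ``$g\circ r$ vanishes $\lambda_x$-a.e.\ for all $x$'' to the identification of the kernel ideal in the full bidual is not valid.
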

\begin{proof}
	Since $s$ is a local homeomorphism, from \cite[Theorem 3.11]{MuhlyTomforde} it follows that pointwise multiplication defines a natural embedding $\iota\colon C_0(E^1) \hookrightarrow \cK(X(E))$.
	Consequently, the induced map $\tilde\iota\colon {C_0(E^1)}^{**} \to \cL({X(E)}^{**})$ is injective.
	The left action is given by the composition \[\varphi\colon C_0(E^0) \xrightarrow{\hat{r}} C_b(E^1) = \mathcal{M}(C_0(E^1)) \xrightarrow{\iota} \mathcal{M}(\cK(X)) = \cL(X(E)).\]
	Therefore, the map $\tilde \varphi$ is given by the composition ${C_0(E^0)}^{**} \xrightarrow{\tilde r} {C_0(E^1)}^{**} \hookrightarrow \cL({X(E)}^{**})$ and thus the kernel of $\tilde\varphi$ equals the kernel of $\tilde r$.
	Finally, the support projection $P_X$ of $\varphi$ equals the support projection of $r$ which is $\mathbf{1}_{r(E^1)}$ by Lemma~\ref{l:supp-im}.
	We conclude that the representation is fully Cuntz-Pimsner covariant if and only if $t(X(E))H = H_{r(E^1)} = \tilde\rho(\mathbf{1}_{r(E^1)})H$.
\end{proof}

The above theorem generalizes the analogous result for discrete graphs by Dor-On and Salomon which was explained earlier in Example~\ref{ex:dor-on-salomon}.

\printbibliography
\end{document}
